\tikzset{
>=stealth',
help lines/.style={dashed, thick}, axis/.style={<->}, important
line/.style={thick}, connection/.style={thick, dotted}, }
\def\R{\mathbb{R}}
\newtheorem{lemma}{Lemma}[section]
\newtheorem{proposition}[lemma]{Proposition}
\newtheorem{remark}[lemma]{Remark}
\newtheorem{theorem}[lemma]{Theorem}
\newtheorem{definition}[lemma]{Definition}
\newtheorem*{remark*}{Remark}
\newtheorem*{def*}{Definition}
\newtheorem*{prop*}{Proposition}
\begin{document}
\title{Bounds for Minkowski Billiard Trajectories in  Convex Bodies}
\author{Shiri Artstein-Avidan, \ Yaron Ostrover}
\date{}
\maketitle
\begin{abstract}
In this paper we use the Ekeland-Hofer-Zehnder symplectic capacity to
provide several bounds and inequalities for the length  of the shortest periodic billiard
 trajectory in a  smooth convex body in ${\mathbb R}^{n}$. Our results
hold both for classical billiards, as well as for the more general case
of Minkowski billiards.
 \end{abstract}

\section{Introduction and results}

The theory of mathematical billiards provides natural models for many
physical problems, and has numerous applications in different fields,
such as dynamical systems, geometric optics, acoustics, and statistical
mechanics to name a few. 
We refer the reader to~\cite{T} and
the references within for an excellent introduction to the subject.

For a smooth convex body $K\subseteq \R^n$, we denote by $\xi(K)$  the
length of the shortest periodic billiard trajectory in $K$
(see Subsection~\ref{Mink-Bill-sec}). In this paper we relate this quantity
to  the Ekeland-Hofer-Zehnder symplectic capacity\footnote{During the preparation of this manuscript
we learned from~\cite{Al,Her} that this relation  may possibly be  obtained via
an approximation scheme developed in~\cite{BG}. Here we  give a direct
proof which avoids this machinery.}.  The precise statement
is given in Theorem~\ref{Main-Theorem} below. Based on this result, we
establish  several estimates for $\xi(K)$ which we now turn to describe.

In fact, our results below apply not only to Euclidean
billiards, but to the more general  case of {\it Minkowski billiards}.
These are the natural generalizations of classical billiards where
the Euclidean structure is replaced by a Minkowski metric.
In particular, from the point of view of geometric optics, Minkowski billiard
trajectories describe the propagation of waves in a homogeneous,
anisotropic medium that contains perfectly reflecting mirrors (see~\cite{GT}).
However, in order to keep the presentation simple, we chose to state our
results in the introduction only for the Euclidean case, and to postpone
the discussion of the analogous results for Minkowski billiards to subsequent sections.

Our first result is the following Brunn-Minkowski type inequality for the length of
the shortest periodic billiard trajectory in a smooth convex body. Here the  Minkowski sum 
of two sets $A, B$ in ${\mathbb R}^{m}$ is defined by $A + B = \{a+b \ ; \ a \in A,b \in B\}$.

\begin{theorem} \label{BM-for-Bill}
For any two  smooth convex bodies $K_1, K_2 \subseteq {\mathbb R}^n$,
one has:
\begin{equation*}
\xi(   K_1 + K_2) \geq \xi(K_1) +  \xi(K_2).
\end{equation*}
Moreover, equality holds if and only if
their exists a closed curve which, up to translation, is a
length-minimizing billiard trajectory in both $K_1$ and $K_2$.
\end{theorem}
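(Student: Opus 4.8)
The plan is to deduce the inequality from Theorem~\ref{Main-Theorem} together with the Brunn--Minkowski inequality for the symplectic capacity $c_{\mathrm{EHZ}}$ of convex bodies. Since all quantities involved are translation invariant we may assume $0\in\mathrm{int}(K_1)\cap\mathrm{int}(K_2)$, so that Theorem~\ref{Main-Theorem} gives $\xi(K)=c_{\mathrm{EHZ}}(K\times B^n)$, where $B^n\subseteq\R^n$ is the Euclidean unit ball and $K\times B^n\subseteq\R^n\times\R^n$ is a Lagrangian product. Two auxiliary facts will be used. First, for $\lambda>0$ the linear map $(q,p)\mapsto(q,\lambda p)$ multiplies the standard symplectic form by $\lambda$ and carries $K\times T$ onto $K\times\lambda T$, so $c_{\mathrm{EHZ}}(K\times\lambda T)=\lambda\,c_{\mathrm{EHZ}}(K\times T)$. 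Second, the symplectic Brunn--Minkowski inequality: $c_{\mathrm{EHZ}}(A+B)^{1/2}\ge c_{\mathrm{EHZ}}(A)^{1/2}+c_{\mathrm{EHZ}}(B)^{1/2}$ for all convex bodies $A,B\subseteq\R^{2n}$ (in particular for the not-necessarily-smooth ones below). Applying this naively to $A=K_1\times B^n$ and $B=K_2\times B^n$ is too lossy, since then $A+B=(K_1+K_2)\times 2B^n$ and one recovers only $2\,\xi(K_1+K_2)\ge\big(\xi(K_1)^{1/2}+\xi(K_2)^{1/2}\big)^2$; the idea is instead to split the ball factor in the proportion governed by $\xi(K_1)$ and $\xi(K_2)$.

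Accordingly, set $\lambda_i=\xi(K_i)/(\xi(K_1)+\xi(K_2))$, so that $\lambda_1+\lambda_2=1$, and apply the symplectic Brunn--Minkowski inequality to $A=K_1\times\lambda_1 B^n$ and $B=K_2\times\lambda_2 B^n$. Then $A+B=(K_1+K_2)\times(\lambda_1 B^n+\lambda_2 B^n)=(K_1+K_2)\times B^n$, while by the scaling fact and Theorem~\ref{Main-Theorem}, $c_{\mathrm{EHZ}}(A)=\lambda_1\xi(K_1)=\xi(K_1)^2/(\xi(K_1)+\xi(K_2))$ and likewise $c_{\mathrm{EHZ}}(B)=\xi(K_2)^2/(\xi(K_1)+\xi(K_2))$. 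Therefore
\[
\xi(K_1+K_2)^{1/2}\ \ge\ c_{\mathrm{EHZ}}(A)^{1/2}+c_{\mathrm{EHZ}}(B)^{1/2}\ =\ \frac{\xi(K_1)}{\sqrt{\xi(K_1)+\xi(K_2)}}+\frac{\xi(K_2)}{\sqrt{\xi(K_1)+\xi(K_2)}}\ =\ \sqrt{\xi(K_1)+\xi(K_2)}\,,
\]
and squaring gives $\xi(K_1+K_2)\ge\xi(K_1)+\xi(K_2)$. The point of the weights $\lambda_i$ is that, for a generic choice, the Cauchy--Schwarz bound $c_{\mathrm{EHZ}}(A)^{1/2}+c_{\mathrm{EHZ}}(B)^{1/2}=(\lambda_1\xi(K_1))^{1/2}+(\lambda_2\xi(K_2))^{1/2}\le(\lambda_1+\lambda_2)^{1/2}(\xi(K_1)+\xi(K_2))^{1/2}$ would leave the estimate on the wrong side; only at the Cauchy--Schwarz equality weights $\lambda_i=\xi(K_i)/(\xi(K_1)+\xi(K_2))$ does the lower bound reach $(\xi(K_1)+\xi(K_2))^{1/2}$. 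Running the same argument with $B^n$ replaced throughout by an arbitrary fixed convex body $T$ yields the Minkowski-billiard version $\xi_T(K_1+K_2)\ge\xi_T(K_1)+\xi_T(K_2)$.

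For the equality statement, since the Cauchy--Schwarz step is already an equality, $\xi(K_1+K_2)=\xi(K_1)+\xi(K_2)$ holds if and only if the symplectic Brunn--Minkowski inequality is an equality for $A=K_1\times\lambda_1 B^n$ and $B=K_2\times\lambda_2 B^n$. I would then invoke (or, in this Lagrangian-product setting, re-derive from the dual action functional) the rigidity of that inequality: equality forces capacity-realizing closed characteristics $\gamma_A\subseteq\partial A$ and $\gamma_B\subseteq\partial B$ that are positively proportional, $\gamma_A=\rho\,\gamma_B+w$ for some $\rho>0$, $w\in\R^{2n}$ (up to reparametrization), and then $\gamma_A+\gamma_B$ realizes $c_{\mathrm{EHZ}}(A+B)$. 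Through the correspondence behind Theorem~\ref{Main-Theorem}, the projection of $\gamma_A$ to the $K_1$-factor is a length-minimizing closed (generalized) billiard trajectory in $K_1$, its $B^n$-component recording the velocity between consecutive reflections, and similarly for $\gamma_B$ and $K_2$; proportionality of $\gamma_A$ and $\gamma_B$ then descends to the assertion that, up to translation (and the rescaling prescribed by $\lambda_1/\lambda_2$), one and the same closed curve is a length-minimizing billiard trajectory in both $K_1$ and $K_2$. Conversely, given such a common trajectory one lifts it, over the corresponding normal data, to proportional closed characteristics on $\partial A$ and $\partial B$ whose Minkowski sum is a closed characteristic on $\partial(A+B)$ of action $\xi(K_1)+\xi(K_2)$, forcing equality.

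The inequality is thus short once Theorem~\ref{Main-Theorem} and the symplectic Brunn--Minkowski inequality are in hand — essentially the weighting identity of the second paragraph. I expect the main obstacle to be the equality case: it rests on the (more delicate) rigidity statement for the symplectic Brunn--Minkowski inequality and on a careful, \emph{reversible} dictionary between closed characteristics on the boundary of a Lagrangian product $K\times T$ and (possibly degenerate) Minkowski billiard trajectories in $K$ — essentially the analysis underlying Theorem~\ref{Main-Theorem}, used here in both directions and compatibly with rescaling.
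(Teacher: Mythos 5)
Your proof of the inequality is correct and is essentially the paper's own argument: both rest on Theorem~\ref{BM-non-smooth-case} for Lagrangian products, the identity $(K_1+K_2)\times(T_1+T_2)=(K_1\times T_1)+(K_2\times T_2)$, and homogeneity/conformality of $\widetilde c_{_{EHZ}}$. In fact, the paper's normalization $K_i'=\lambda_i^{-1}K_i$ with $\xi_T(K_1')=\xi_T(K_2')$ amounts to applying the capacity Brunn--Minkowski inequality to exactly your pair $K_1\times\lambda_1 T$, $K_2\times\lambda_2 T$ with $\lambda_1=\xi_T(K_1)/(\xi_T(K_1)+\xi_T(K_2))$; the weighted AM--GM step there plays the role of your direct computation with the weights. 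So apart from where the rescaling is placed (on the $K_i$'s versus on the $T$-factor) and a final continuity remark for non-strictly-convex bodies, the two derivations coincide.

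The equality case is where your proposal does not reach the stated conclusion. Tracing equality through your normalized pair yields only that $K_1\times\lambda_1 B$ and $K_2\times\lambda_2 B$ have capacity carriers coinciding up to translation \emph{and dilation}; projecting to the momentum factor, the two momentum curves lie on the distinct spheres $\lambda_1\partial B$ and $\lambda_2\partial B$, which pins the dilation at $\lambda_1/\lambda_2$ and hence leaves the configuration-space projections related by a genuine dilation --- precisely the ``rescaling prescribed by $\lambda_1/\lambda_2$'' that you keep in your conclusion. That is weaker than the theorem's assertion, which allows a translation only. The paper's text obtains ``translation only'' by stating the equality criterion for the \emph{un-normalized} products $K_1\times B$ and $K_2\times B$: there both momentum projections lie in the same unit ball, so the dilation is forced to be $1$, the momentum curves coincide, and by duality the $q$-projections differ by a translation alone. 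As it stands, your route cannot dispense with the dilation: for homothetic bodies, e.g.\ $K_2=2K_1$, one has $\xi(K_1+K_2)=\xi(K_1)+\xi(K_2)$ while the length-minimizing trajectories in $K_1$ and $K_2$ are only dilates of one another, so any argument passing solely through the pair $K_1\times\lambda_1 B$, $K_2\times\lambda_2 B$ can at best yield ``translation and dilation''. You therefore need either to justify the equality criterion at the level of $K_1\times B$, $K_2\times B$ as the paper does (and explain why equality in the billiard inequality forces it there, which is the delicate point the paper passes over quickly), or to accept the ``translation and dilation'' formulation and reconcile it with the statement of Theorem~\ref{BM-for-Bill}; the same example shows this reconciliation is not merely cosmetic.
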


In light of the ``classical versus quantum" relation between the
length spectrum 
and the Laplace spectrum via trace formulae and Poisson relations
(see e.g.,~\cite{CdV,GM}),
Theorem~\ref{BM-for-Bill} above can be viewed as a classical counterpart
of a well-known result of Brascamp and Lieb stating that the first
eigenvalue of the Dirichlet Laplace operator on bounded convex
domains satisfies a Brunn-Minkowski type inequality, see~\cite{BL}.

Our next result provides an upper bound for $\xi(K)$ in terms of the
volume ${\rm Vol}(K)$.

\begin{theorem} \label{main-application2}
Let $K \subseteq {\mathbb R}^{n}$ be a smooth convex body. Then,
\begin{equation*} 
\xi (K) \leq C  \sqrt{n} \, {\rm Vol}  (K)^{\frac 1 n},
\end{equation*}
where $C$ is some positive constant independent of the dimension.
\end{theorem}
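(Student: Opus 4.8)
The plan is to deduce the estimate directly from the identification of $\xi(K)$ with a symplectic capacity, combined with the known volume bound for that capacity on convex bodies. First I would invoke Theorem~\ref{Main-Theorem}: in the Euclidean case it identifies the shortest billiard length $\xi(K)$ with $c_{\mathrm{EHZ}}(K\times B_2^n)$, the Ekeland--Hofer--Zehnder capacity of the Lagrangian product of $K$ (in the position factor) with the Euclidean unit ball $B_2^n\subseteq\R^n$ (in the momentum factor), computed for the standard symplectic structure on $\R^n\times\R^n$. The problem is thereby reduced to an upper bound for $c_{\mathrm{EHZ}}(K\times B_2^n)$ in terms of $\mathrm{Vol}(K\times B_2^n)=\mathrm{Vol}(K)\,\mathrm{Vol}(B_2^n)$.

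The key step is then to apply the (weak, but known) Viterbo-type inequality for convex bodies: there is a universal constant $A$ so that every convex body $\Sigma\subseteq\R^{2n}$ satisfies
\begin{equation*}
c_{\mathrm{EHZ}}(\Sigma)\ \le\ A\,\bigl(n!\,\mathrm{Vol}(\Sigma)\bigr)^{1/n},
\end{equation*}
an estimate obtained via Milman's $M$-ellipsoid (Artstein-Avidan--Milman--Ostrover); it is exactly this inequality that Theorem~\ref{Main-Theorem} is designed to be plugged into. Taking $\Sigma=K\times B_2^n$ gives
\begin{equation*}
\xi(K)\ =\ c_{\mathrm{EHZ}}(K\times B_2^n)\ \le\ A\,\bigl(n!\,\mathrm{Vol}(B_2^n)\bigr)^{1/n}\,\mathrm{Vol}(K)^{1/n}.
\end{equation*}

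It remains to check that the dimensional constant $\bigl(n!\,\mathrm{Vol}(B_2^n)\bigr)^{1/n}$ is $O(\sqrt n)$. Writing $\mathrm{Vol}(B_2^n)=\pi^{n/2}/\Gamma(\tfrac n2+1)$ and using Stirling's formula for $n!$ and for $\Gamma(\tfrac n2+1)$, one gets $\tfrac1n\log\bigl(n!\,\mathrm{Vol}(B_2^n)\bigr)=\tfrac12\log n+\tfrac12\log(2\pi/e)+o(1)$, hence $\bigl(n!\,\mathrm{Vol}(B_2^n)\bigr)^{1/n}\le C'\sqrt n$ with $C'$ universal. Combining the two displays yields $\xi(K)\le C\sqrt n\,\mathrm{Vol}(K)^{1/n}$ with $C=AC'$, as claimed; the Minkowski version should follow identically, with $B_2^n$ replaced by the body defining the metric.

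The step I expect to be the crux is the convex symplectic volume inequality: bounding $c_{\mathrm{EHZ}}$ of a general convex body by its volume with a dimension-independent constant is a genuine theorem (a weak form of Viterbo's conjecture), and I do not see how the product structure $K\times B_2^n$ by itself yields the sharp $\sqrt n$ rate by soft means. One could instead attempt a self-contained argument through the variational description of (Minkowski) billiard trajectories --- producing a closed curve of controlled length that admits no translate inside $\mathrm{int}\,K$ --- but elementary choices, such as the back-and-forth segment realizing the minimal width $w(K)$, only give $\xi(K)\le 2\,w(K)\le Cn\,\mathrm{Vol}(K)^{1/n}$, which is off by a factor $\sqrt n$; closing that gap seems to require the same symplectic input.
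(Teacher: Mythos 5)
Your proof is correct and follows essentially the same route as the paper: identify $\xi(K)=\widetilde c_{_{EHZ}}(K\times B)$ via Theorem~\ref{Main-Theorem}, apply the capacity--volume inequality of Artstein-Avidan--Milman--Ostrover (the paper's Theorem~\ref{thm-AAOM}, equivalent to your $c(\Sigma)\le A\,(n!\,\mathrm{Vol}(\Sigma))^{1/n}$ since $c(B^{2n})=\pi$ and $\mathrm{Vol}(B^{2n})=\pi^n/n!$), and then absorb the dimensional factor $(n!\,\mathrm{Vol}(B^n))^{1/n}=O(\sqrt n)$ by Stirling. The only cosmetic difference is that the paper first records the intermediate bound $\widetilde c_{_{EHZ}}(K\times T)\le A_1 n\,\mathrm{Vol}(K)^{1/n}\mathrm{Vol}(T)^{1/n}$ for general $T$ before specializing to $T=B$, which you do in one step.
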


Note that the case $K=B$, where $B$ is the Euclidean unit ball,
implies that the above inequality is asymptotically sharp, i.e., it is of
the optimal order of magnitude. Another bound for $\xi(K)$,
which follows directly from 
Theorem~\ref{BM-for-Bill} is the following:
\begin{theorem} \label{main-application}
Let $K \subseteq {\mathbb R}^{n}$ be a smooth convex body. Then,
\begin{equation*} 
\xi (K) \leq 2  (n+1) \,  { inrad}(K).
\end{equation*}
\end{theorem}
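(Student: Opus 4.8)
The plan is to deduce the inequality from Theorem~\ref{BM-for-Bill} together with the first-order (John-type) description of the inball of $K$. Put $r={\rm inrad}(K)$ and, after a translation, assume that the largest inscribed ball of $K$ is $rB$, centred at the origin, where $B$ is the Euclidean unit ball. For a unit vector $u$ write $h_K(u)=\max_{x\in K}\langle x,u\rangle$ and $w_K(u)=h_K(u)+h_K(-u)$ for the width of $K$ in the direction $u$. I will establish two statements: (I) $\xi(K)\le 2\,w_K(u)$ for \emph{every} unit vector $u$; and (II) there is a unit vector $u$ with $w_K(u)\le(n+1)\,r$. Together, (I) and (II) give $\xi(K)\le 2(n+1)\,r=2(n+1)\,{\rm inrad}(K)$.

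For (I), fix $u$ and let $S$ be a slab of width $t>0$, i.e. the region between two parallel hyperplanes at distance $t$ with common unit normal $u$. Since $K$ is bounded, $K+S$ is again such a slab, of width $t+w_K(u)$. For an arbitrary slab $\Sigma$ of width $\tau$ one has $\xi(\Sigma)=2\tau$: the chord of $\Sigma$ of length $\tau$ perpendicular to its faces, traversed back and forth, is a periodic billiard trajectory of length $2\tau$, so $\xi(\Sigma)\le 2\tau$; and $\xi(\Sigma)\ge 2\tau$ because projecting any periodic trajectory $\gamma$ in $\Sigma$ onto the line $\R u$ yields a one-dimensional billiard motion in an interval of length $\tau$ which cannot be constant (if $\langle\dot\gamma,u\rangle\equiv0$ then $\gamma$ is a line lying in a hyperplane, hence not closed), so the period of $\gamma$ is at least $2\tau$. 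Consequently $\xi(S)=2t$ and $\xi(K+S)=2t+2\,w_K(u)$, and Theorem~\ref{BM-for-Bill} applied to $K$ and $S$ yields $2t+2\,w_K(u)=\xi(K+S)\ge\xi(K)+\xi(S)=\xi(K)+2t$, that is $\xi(K)\le 2\,w_K(u)$.

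For (II), the maximality of the inscribed ball $rB$ means, by the standard first-order optimality condition for the inball, that there are contact points $ru_1,\dots,ru_m\in\partial K$ with $u_j$ a unit vector, $m\le n+1$, together with weights $\lambda_1,\dots,\lambda_m>0$ such that $\sum_{j=1}^m\lambda_j=1$ and $\sum_{j=1}^m\lambda_j u_j=0$. Since $rB\subseteq K$ and $rB$ touches $\partial K$ at $ru_j$, the supporting hyperplane of $K$ at $ru_j$ is the tangent hyperplane $\{\langle x,u_j\rangle=r\}$ of $rB$; hence $h_K(u_j)=r$ and $\langle x,u_j\rangle\le r$ for all $x\in K$. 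Fixing an index $j$, multiplying the inequalities $\langle x,u_i\rangle\le r$ by $\lambda_i$ and summing over $i\ne j$, and using $\sum_{i\ne j}\lambda_i u_i=-\lambda_j u_j$, we obtain $-\lambda_j\langle x,u_j\rangle\le r(1-\lambda_j)$, i.e. $\langle x,u_j\rangle\ge r-r/\lambda_j$ for all $x\in K$; therefore $h_K(-u_j)\le r/\lambda_j-r$ and $w_K(u_j)=h_K(u_j)+h_K(-u_j)\le r/\lambda_j$. Since $\sum_j\lambda_j=1$ and $m\le n+1$, some $\lambda_j\ge 1/m\ge 1/(n+1)$; for that $j$ we get $w_K(u_j)\le(n+1)\,r$, which is (II).

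The only delicate point is the use of Theorem~\ref{BM-for-Bill} in step (I): the sets $S$ and $K+S$ are unbounded, while the theorem as stated concerns convex bodies. This should be handled routinely — either by invoking the Brunn--Minkowski inequality for general convex sets, or by replacing $S$ with a large flat cylinder, applying the theorem to the resulting bounded bodies, and letting the cylinder grow, while checking that $\xi$ of such a cylinder equals twice its thickness once it is sufficiently wide in the transverse directions. Everything else (the contact-point identity, the one-dimensional projection bound for slabs, and the pigeonhole inequality $\max_j\lambda_j\ge 1/(n+1)$) is elementary, so this approximation is the only real obstacle.
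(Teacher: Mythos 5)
Your argument is correct in substance, and it implements the same skeleton as the paper --- combine a bound of the form $\xi(K)\le 2\,{ width}(K)$, obtained through the billiard Brunn--Minkowski inequality, with a Steinhagen-type estimate ${ width}(K)\le (n+1)\,{ inrad}(K)$ --- but both halves are carried out differently. The paper symmetrizes: by Theorem~\ref{BM-for-Bill} (with $\xi(-K)=\xi(K)$ and homogeneity) $\xi(K)\le \xi\bigl(\tfrac12(K-K)\bigr)$, and for the symmetric body $K^*=\tfrac12(K-K)$ one has ${width}(K^*)=2\,{inrad}(K^*)$, so Ghomi's Theorem~\ref{thm-ghomi} gives $\xi(K^*)=4\,{inrad}(K^*)=2\,{width}(K)$; it then quotes the inequality ${width}(K)\le(n+1)\,{inrad}(K)$ from the convexity literature. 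You instead get $\xi(K)\le 2\,w_K(u)$ for each direction $u$ by adding a slab (a degenerate summand in Theorem~\ref{BM-for-Bill}), and you prove the $(n+1)$-Steinhagen bound yourself via the inball contact points and Carath\'eodory --- that part is complete and is essentially the standard proof of the inequality the paper cites. What each route buys: yours is self-contained (no appeal to Ghomi's lower bound or to Steinhagen), while the paper's stays inside the class of convex bodies, so Theorem~\ref{BM-for-Bill} applies verbatim with no approximation.

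The only thin spot is the one you flag, and your sketch of the fix is incomplete in one respect: after replacing the slab $S$ by a bounded flat cylinder $C_R$ you mention checking $\xi(C_R)=2t$ (fine: the bouncing-ball orbit gives $\le 2t$, and either Ghomi's theorem or monotonicity together with $\xi(rB)=4r$ gives $\ge 2t$ once $2R\ge t$), but you also need the upper bound $\xi(K+C_R)\le 2\bigl(t+w_K(u)\bigr)$, which for the genuine slab was automatic because $K+S$ is again a slab, whereas $K+C_R$ is not a cylinder. This is still easy: once $R>{ diam}(K)$, the faces of $K+C_R$ in the directions $\pm u$ contain flat $(n-1)$-discs lying in parallel hyperplanes at distance $t+w_K(u)$ whose projections to $u^{\perp}$ overlap, so a two-bounce orbit of length $2\bigl(t+w_K(u)\bigr)$ exists and Theorem~\ref{Main-Theorem} gives the bound; the lack of smoothness of $C_R$ and $K+C_R$ is then absorbed by the Hausdorff continuity of $\xi$ that the paper already invokes. (In fact, the same two-bounce/double-normal observation applied to $K$ itself yields $\xi(K)\le 2\,{width}(K)$ directly, so step (I) could bypass the slab construction altogether.) With these additions your proof is complete.
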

Here $inrad(K)$ stands for the inradius of $K$ i.e.,
the radius of the largest inscribed ball in $K$.
Numerical computations indicate that up 
to a universal constant (independent of dimension)  the example of the (slightly smoothed) regular
$n$-simplex  shows that  the inequality in Theorem~\ref{main-application} is asymptotically sharp.

Finally, another by-product of Theorem~\ref{Main-Theorem} mentioned
above is the following monotonicity statement, which although seems to be known
to experts in the field, has not been
addressed in the literature to the best of our knowledge.
\begin{proposition} \label{prop-about-monotonicity}
Let  $K_1 \subseteq  K_2 \subseteq {\mathbb R}^n$ be two smooth
convex bodies. Then,
\begin{equation*}
\xi(K_1) \leq \xi(K_2).
\end{equation*}
\end{proposition}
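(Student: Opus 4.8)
The plan is to read the proposition off directly from Theorem~\ref{Main-Theorem}, which identifies $\xi(K)$ with a symplectic capacity of a convex body in $\mathbb{R}^{2n}$ built from $K$. Concretely, Theorem~\ref{Main-Theorem} expresses $\xi(K)$ as the Ekeland--Hofer--Zehnder capacity $c_{\mathrm{EHZ}}(K\times B)$ of the Lagrangian product of $K$ with a fixed convex body $B\subseteq\mathbb{R}^n$ determined by the ambient metric (for Euclidean billiards $B$ is the Euclidean unit ball; in the Minkowski setting it is the appropriate fixed "geometry" body). The only property of $c_{\mathrm{EHZ}}$ needed is its monotonicity under inclusions, which is one of the defining axioms of a symplectic capacity: if $U\subseteq V$ are domains in $\mathbb{R}^{2n}$, then $c_{\mathrm{EHZ}}(U)\le c_{\mathrm{EHZ}}(V)$.

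Granting this, the argument is immediate. From $K_1\subseteq K_2$ we get $K_1\times B\subseteq K_2\times B$ in $\mathbb{R}^{2n}$, and therefore
\[
\xi(K_1)=c_{\mathrm{EHZ}}(K_1\times B)\le c_{\mathrm{EHZ}}(K_2\times B)=\xi(K_2).
\]
So the steps, in order, are: (i) apply Theorem~\ref{Main-Theorem} to both $K_1$ and $K_2$; (ii) observe that Cartesian products preserve inclusions; (iii) invoke monotonicity of the capacity; (iv) translate the resulting inequality back in terms of $\xi$. The same three lines handle the Minkowski case verbatim, using the corresponding form of Theorem~\ref{Main-Theorem} with the appropriate fixed body in place of $B$.

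I do not expect a genuine obstacle here once Theorem~\ref{Main-Theorem} is in hand; this is precisely why the statement is presented as a by-product. It is worth stressing, though, what makes it nontrivial \emph{without} the symplectic input: one cannot simply deform a shortest closed billiard trajectory of $K_2$ into $K_1$, since the reflection law does not behave monotonically under inclusion of the table, and a naive radial projection need neither shorten the trajectory nor preserve the billiard property. The capacity characterization bypasses this entirely. The small amount of technical care required — that $c_{\mathrm{EHZ}}$ is well-defined and monotone on the products $K_i\times B$ even though these are generally not smooth — is already part of the standard theory of the Ekeland--Hofer--Zehnder capacity on convex domains.
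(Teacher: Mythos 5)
Your proposal is correct and follows essentially the same route as the paper: the authors likewise identify $\xi_T(K)=\widetilde c_{_{EHZ}}(K\times T)$ via Theorem~\ref{Main-Theorem}, note that $K_1\times T\subseteq K_2\times T$, and invoke the monotonicity axiom of the capacity (using its extension to non-smooth convex bodies), specializing to $T=B$ for the Euclidean case.
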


For comparison, let us now briefly describe some  known results
regarding the length of the shortest periodic billiard trajectory. We start
with the following theorem by Ghomi which, for a convex body $K$,  gives a sharp lower
bound for $\xi(K)$.

\begin{theorem}[Ghomi~\cite{Gh}]\label{thm-ghomi} Let
$K \subseteq {\mathbb R}^n$ be a convex body, Then,
\begin{equation*}   \xi(K) \geq 4 \, { inrad}(K). \end{equation*}
Moreover, equality holds if and only if ${width}(K ) = 2 \, {
inrad}(K )$, and then the shortest  billiard trajectory is
2-periodic.
\end{theorem}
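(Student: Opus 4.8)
I would reduce it to the round ball. Set $r=\mathrm{inrad}(K)$ and let $B\subseteq K$ be an inscribed ball of radius $r$. By the monotonicity of $\xi$ (Proposition~\ref{prop-about-monotonicity}) it suffices to show $\xi(B)=4r$. The bound $\xi(B)\le 4r$ comes from a diameter of $B$ traversed back and forth. For $\xi(B)\ge 4r$, observe that a closed billiard trajectory in a round ball stays in a plane through the center (the normal at every bounce is radial, so reflection preserves the plane spanned by the center and the incoming edge), which reduces the question to the planar disc of radius $r$; there the closed billiard polygons are exactly the inscribed star $(m,k)$-gons, of length $2mr\sin(k\pi/m)$, and this is minimized, with value $4r$, only by the diameter $m=2$. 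I will also record one elementary fact for later use: any closed rectifiable curve $\gamma$ of length $L$ lies in the solid prolate spheroid $E_{a,b}=\{p:\,|p-a|+|p-b|\le L/2\}$, where $a,b\in\gamma$ is any pair bisecting the perimeter — indeed, for $p\in\gamma$ the sub-arc through $p$ joining $a$ to $b$ has length $\le L/2$ and $\ge|p-a|+|p-b|$. This spheroid has circumradius $L/4$ (its semi-major axis), and when $a\neq b$ it meets its circumscribed sphere only at the two endpoints of its major axis.

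\textbf{The ``if'' direction of the equality clause.} Assume $\mathrm{width}(K)=2r$. Pick a direction $u_0$ attaining the minimal width, and let $p^{\pm}\in\partial K$ be the points where $K$ touches the two supporting hyperplanes orthogonal to $u_0$. Since the derivative of $u\mapsto h_K(u)+h_K(-u)$ along the sphere at $u_0$ is represented by the tangential part of $p^{+}-p^{-}$, minimality forces $p^{+}-p^{-}$ to be parallel to $u_0$; hence $[p^{-},p^{+}]$ is a double normal of $K$ of length $\mathrm{width}(K)=2r$, and the bouncing-ball trajectory along it has length $4r$. With the inequality already in hand, $\xi(K)=4r$, realized by this $2$-periodic trajectory.

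\textbf{The ``only if'' direction.} Assume $\xi(K)=4r$ and let $\gamma$ be an arbitrary shortest closed billiard trajectory, so $L(\gamma)=4r$. The one substantive input — returned to below — is that $\gamma$ cannot be translated into $\mathrm{int}(K)$. Granting this, $\gamma$ fits into no open ball of radius $r$ (such a ball could be slid into the open inscribed ball $\subseteq\mathrm{int}(K)$), so the minimal enclosing ball $\bar B(z,\rho)$ of $\gamma$ has $\rho\ge r$. On the other hand $\gamma\subseteq E_{a,b}$ for a perimeter-bisecting pair $a\neq b$ (this pair is nondegenerate for a genuine closed billiard polygon), and $E_{a,b}$ has circumradius $r$, so $\rho\le r$ and hence $\rho=r$. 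Now $\gamma$ lies in both $\bar B(z,r)$ and $\bar B(\tfrac{a+b}{2},r)$; since the intersection of two distinct balls of radius $r$ lies in a ball of smaller radius, minimality of $\rho$ forces $z=\tfrac{a+b}{2}$. Consequently $\gamma$ can meet its own minimal enclosing sphere $\partial\bar B(z,r)$ only among the two tips of the major axis of $E_{a,b}$; as such a sphere must be met in at least two points whose convex hull contains the center, $\gamma$ meets it in exactly those two points $s_1,s_2$, which are therefore antipodal: $|s_1-s_2|=2r$. But $s_1,s_2\in\gamma$ and $\mathrm{diam}(\gamma)\le L(\gamma)/2=2r$, so $\mathrm{diam}(\gamma)=2r$, attained by $s_1,s_2$; the two sub-arcs of $\gamma$ joining $s_1$ to $s_2$ each have length $\ge|s_1-s_2|$ and sum to $4r$, hence each is the segment $[s_1,s_2]$. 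Thus $\gamma$ is the $2$-periodic bouncing-ball orbit along $[s_1,s_2]$, and the reflection law at $s_1,s_2$ forces this chord to be orthogonal to $\partial K$ at both endpoints, i.e.\ a double normal of length $2r$; its two (parallel) supporting hyperplanes are then at distance $2r$, so $\mathrm{width}(K)\le 2r$, and hence $\mathrm{width}(K)=2r$ because $\mathrm{width}(K)\ge 2\,\mathrm{inrad}(K)$ always. Since $\gamma$ was an arbitrary minimizer, every shortest trajectory is $2$-periodic.

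\textbf{The main obstacle.} The only nontrivial ingredient, used twice above, is that a length-minimizing closed billiard trajectory cannot be translated into $\mathrm{int}(K)$ — equivalently, that $\xi(K)$ equals the minimal perimeter of a closed polygon not translatable into the interior of $K$ (the Bezdek--Bezdek characterization). I would establish it either by a direct ``shrink and push'' argument on minimizers, or by reading it off the Ekeland-Hofer-Zehnder / Clarke-dual description of $\xi(K)$ furnished by Theorem~\ref{Main-Theorem}, under which shortest trajectories appear as minimal closed characteristics and the non-translatability is automatic. Everything else is plane geometry; in particular the only use of smoothness (and strict convexity) is the first-variation step in the ``if'' direction, which for general convex bodies is replaced by the classical fact that the thickness of a convex body is attained by a double normal.
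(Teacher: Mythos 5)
First, a point of context: the paper does not prove this statement at all --- Theorem~\ref{thm-ghomi} is quoted from Ghomi~\cite{Gh}, whose own argument is a direct geometric one that does not pass through symplectic capacities. So there is no internal proof to compare with, and your route (monotonicity of $\xi$ from Proposition~\ref{prop-about-monotonicity} plus the computation $\xi(B)=4r$ for the inequality, and the perimeter-bisecting spheroid $E_{a,b}$ versus the minimal enclosing ball for the equality case) is a legitimately different plan. The planar-geometry part of your ``only if'' argument is sound, and the ``if'' direction via a double normal realizing the minimal width is the classical argument. But the proof as written has a genuine gap, and it is exactly the one you flag: the claim that a length-minimizing closed billiard trajectory cannot be translated into ${\rm int}(K)$ is never established. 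Your fallback that it is ``automatic'' from the Ekeland--Hofer--Zehnder description of Theorem~\ref{Main-Theorem} is not correct as stated: identifying $\widetilde c_{_{EHZ}}(K\times B)$ with a minimum over closed curves that cannot be translated into ${\rm int}(K)$ is itself a nontrivial theorem (of Bezdek--Bezdek type), not something one reads off Proposition~\ref{prop-about-the-1-1-corrospendence}. As it stands, the entire equality characterization rests on an unproved assertion.

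The good news is that the fact you actually need is much weaker than the Bezdek--Bezdek characterization and has a short proof from the reflection law, which you should include. For a proper trajectory with bounce points $q_1,\dots,q_m$ and unit edge directions $u_j$, the (generalized) reflection law gives $u_{j+1}-u_j=-\lambda_j n_j$ with $\lambda_j>0$ and $n_j\in N_K(q_j)\setminus\{0\}$; summing around the closed trajectory yields $\sum_j\lambda_j n_j=0$, whereas $q_j+v\in{\rm int}(K)$ for all $j$ would force $\langle v,n_j\rangle<0$ for every $j$ --- a contradiction, so not even the bounce points can be translated into the interior. Since this paper explicitly admits gliding trajectories as billiard trajectories, you must also treat that case: by Proposition~\ref{dichotomy-prop} one has $\dot\gamma_p(t)=\beta(t)\nabla g_K(\gamma_q(t))$ with $\beta>0$, and closedness of $\gamma_p$ gives $\oint\beta(t)\nabla g_K(\gamma_q(t))\,dt=0$, so the same pairing argument applies. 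Two smaller repairs: in the reduction $\xi(B)=4r$ you should also rule out gliding trajectories (length $2\pi r>4r$), and the choice of a perimeter-bisecting pair with $a\neq b$ deserves a sentence (if $\gamma(t)=\gamma(t+L/2)$ identically, the half-curve would be a closed trajectory of length $2r<4r$, contradicting the inequality already proved); likewise the monotonicity step for non-smooth $K$ should be routed through $\widetilde c_{_{EHZ}}$ rather than Proposition~\ref{prop-about-monotonicity} verbatim. With these additions the argument closes, but without the non-translatability step it does not.
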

Here, 
$width(K)$ is the
thickness of the narrowest slab which contains $K$, and a trajectory
is 2-periodic if it has exactly two bouncing points. We remark that
the definition of a billiard trajectory in~\cite{Gh} is
slightly less general than the one used here.
However, both definitions coincide for the shortest periodic billiard trajectory on a smooth strictly convex body.

An upper  bound for the length of the shortest periodic  billiard
trajectory inside an arbitrary bounded domain $\Omega \subseteq {\mathbb R}^n$ with smooth
boundary  in terms of the volume ${\rm Vol}(\Omega)$ was given by Viterbo~\cite{V1},
 who proved that:
\begin{theorem}[Viterbo~\cite{V1}]  \label{Thm-Viterbo}
Let $\Omega$ be a bounded domain in ${\mathbb
R}^n$  with smooth boundary. Then there exists a billiard trajectory in
$\Omega$ of length $l$ with
\begin{equation} \label{Viterbo-result-on-vol}  l \leq C_n {\rm Vol} (\Omega)^{\frac 1 n } ,
\end{equation}
where $C_n$ is a positive  constant which depends  on the dimension  satisfying
$C_n = O(2^n)$.
\end{theorem}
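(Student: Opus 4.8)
The plan is to obtain the statement by lifting the billiard dynamics in $\Omega$ to a Hamiltonian flow on the boundary of an auxiliary domain in $\R^{2n}$, and then to estimate the relevant symplectic capacity of that domain by its volume. So there are really two ingredients: (i) a dictionary \emph{periodic billiard trajectories in $\Omega\ \longleftrightarrow\ $ closed characteristics on $\partial(\Omega\times B^n)$}, under which length $=$ action; and (ii) an isoperimetric-type inequality bounding a symplectic capacity by the volume.

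\emph{Lifting billiards to $\R^{2n}$.} Write $\R^{2n}=\R^n_q\times\R^n_p$ with its standard symplectic structure and attach to $\Omega$ the Lagrangian product $X_\Omega:=\Omega\times B^n$, where $B^n$ is the closed Euclidean unit ball in the momentum factor. Away from the corner $\partial\Omega\times\partial B^n$, the hypersurface $\partial X_\Omega$ is smooth, and its characteristic flow is explicit: on ${\rm int}(\Omega)\times\partial B^n$ the characteristic is a straight free-flight segment in the direction of $p$, while on $\partial\Omega\times{\rm int}(B^n)$ the position $q$ stays fixed on $\partial\Omega$ and $p$ is reflected across the tangent hyperplane of $\partial\Omega$ (the law of reflection). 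After a standard smoothing of the corner, closed characteristics on $\partial X_\Omega$ thus correspond to periodic billiard trajectories in $\Omega$, and the symplectic action of a characteristic equals the Euclidean length of the trajectory. For convex $\Omega$ this is exactly the content of Theorem~\ref{Main-Theorem}; for a general smooth $\Omega$ the same local computation applies.

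\emph{From capacity to an honest short orbit, and to volume.} For a bounded domain $U\subseteq\R^{2n}$ with smooth boundary, the Hofer--Zehnder capacity $c_{HZ}(U)$ is (almost) realized on $\partial U$: there is a closed characteristic of action at most $c_{HZ}(U)$ (Hofer--Zehnder, Struwe). Applying this to $U=X_\Omega$, together with a limiting argument removing the corner smoothing (which changes all quantities by a controlled amount), we get a periodic billiard trajectory in $\Omega$ of length $l\le c_{HZ}(\Omega\times B^n)$. Now invoke Viterbo's volume--capacity inequality: every bounded $U\subseteq\R^{2n}$ satisfies $c_{HZ}(U)\le A_n\,{\rm Vol}(U)^{1/n}$ for a dimensional constant $A_n$. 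With $U=\Omega\times B^n$ one has ${\rm Vol}(\Omega\times B^n)={\rm Vol}(\Omega)\cdot{\rm Vol}(B^n)$ and ${\rm Vol}(B^n)^{1/n}=O(n^{-1/2})$, so $l\le A_n\cdot O(n^{-1/2})\cdot{\rm Vol}(\Omega)^{1/n}=:C_n\,{\rm Vol}(\Omega)^{1/n}$, and tracking Viterbo's constant gives $C_n=O(2^n)$.

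The crux — and the reason the constant is exponential rather than $O(\sqrt n)$ — is ingredient (ii) for an \emph{arbitrary}, possibly very non-convex, bounded domain. The cheap estimate $c_{HZ}(U)\le\pi\,({\rm width\ of\ }U)^2$ coming from nonsqueezing is far too weak for thin or irregular $U$, so one needs a genuinely global argument; Viterbo's proof runs through generating-function/variational capacities and a decomposition of $U$, and it is precisely there that the factor $2^n$ is lost. By contrast, if $\Omega$ (hence $\Omega\times B^n$) is convex, John's theorem and symmetrization can replace this step and yield the much better constant $O(\sqrt n)$ — which is exactly what is exploited in Theorem~\ref{main-application2}. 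The other steps are the local billiard computation behind this paper's main theorem plus standard existence results for closed characteristics, the only technical nuisance being the routine corner-smoothing.
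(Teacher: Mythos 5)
You should first note that the paper contains no proof of this statement at all: it is quoted verbatim from Viterbo~\cite{V1} as background, and the only in-paper machinery that resembles your strategy (Theorem~\ref{Main-Theorem}, identifying $\widetilde c_{_{EHZ}}(K\times T)$ with the minimal action of $(K,T)$-billiard trajectories) is established only for \emph{convex} bodies, via Clarke's dual action principle and the variational characterization of Proposition~\ref{proposition-EHZ-capacity-non-smooth}. So there is no proof in the paper to compare against, and you cannot borrow the paper's correspondence for a general bounded $\Omega$: the local computation of the characteristic flow away from the corner is indeed the same, but the global existence and compactness arguments in the paper use convexity in an essential way.

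The genuine gap in your sketch is the existence step. The claim that ``$c_{HZ}(U)$ is (almost) realized on $\partial U$: there is a closed characteristic of action at most $c_{HZ}(U)$'' is not a theorem for a fixed smooth hypersurface: by the counterexamples to the Hamiltonian Seifert conjecture (Ginzburg et al.) there are smooth compact hypersurfaces in ${\mathbb R}^{2n}$, $2n\geq 6$, carrying no closed characteristic whatsoever. What Hofer--Zehnder/Struwe give is \emph{almost existence}: closed characteristics with action bounded by the capacity on almost every nearby level set of a Hamiltonian exhausting $U$. Consequently, after smoothing the corner of $\Omega\times B^n$ you only get closed orbits on a full-measure family of nearby levels, and you must pass to a limit; in that limit the orbits can degenerate --- bounce points can accumulate, the orbit can slide into the corner $\partial\Omega\times\partial B^n$ or glide along $\partial\Omega$, or collapse --- and for non-convex $\Omega$ there is no dual action principle to control this. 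Handling precisely this degeneration is the content of the Benci--Giannoni approximation scheme (mentioned in the paper's footnote) and of Viterbo's actual argument, so dismissing it as ``routine corner-smoothing'' hides the main difficulty. The remaining bookkeeping (${\rm Vol}(\Omega\times B^n)={\rm Vol}(\Omega)\,{\rm Vol}(B^n)$ and $C_n=O(2^n)$) is plausible, but you should also check that the volume--capacity inequality you invoke is available for the capacity you use: Viterbo proves it for his generating-function capacities, so you either need a comparison with $c_{HZ}$ valid for arbitrary bounded domains or you must run the whole argument with those capacities directly.
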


Theorem~\ref{Thm-Viterbo} was improved by Irie~\cite{Ir}, who used
a certain symplectic capacity,  defined via the theory of symplectic
homology that was introduced in~\cite{V3}, to show:
\begin{theorem} [Irie~\cite{Ir}]
Let $\Omega$ be a bounded domain in ${\mathbb
R}^n$  with smooth boundary. Then there exists a billiard trajectory
 in $\Omega$ of length $l$ with
\begin{equation} \label{Irie-result-on-inrad} l \leq C_n  {\, inrad}(\Omega),
\end{equation}
where $C_n$ is a positive  constant which depends   on the dimension $n$.
\end{theorem}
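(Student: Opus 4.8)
The plan is to recast the billiard problem symplectically and then control it with the action-filtered symplectic homology introduced in \cite{V3} and exploited by Irie \cite{Ir}; the whole point is to isolate the inradius, rather than the volume governing Theorem~\ref{Thm-Viterbo}, as the relevant scale. Since $\Omega$ is only assumed smooth and bounded (not convex), the Ekeland--Hofer--Zehnder route of Theorem~\ref{Main-Theorem} is unavailable, which is precisely why a more flexible invariant is needed. First I would reformulate periodic billiard trajectories in $\Omega$ as closed characteristics on the boundary of the unit codisc bundle $D^*\Omega=\{(q,p): q\in\Omega,\ |p|\le 1\}$, which for the flat metric is just the Lagrangian product $\Omega\times B^n\subseteq\R^{2n}$; the reflections at $\partial\Omega$ are encoded in the broken Reeb dynamics on $\partial(D^*\Omega)$, and under this correspondence the Euclidean length of a trajectory equals the symplectic action of the associated closed characteristic. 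Thus $\xi(\Omega)$ is the smallest action in the billiard spectrum, and the task reduces to producing a closed characteristic of action at most $C_n\,\mathrm{inrad}(\Omega)$.

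Next I would introduce the symplectic homology $SH(D^*\Omega)$ together with its action filtration, and the associated spectral capacity $c(D^*\Omega)$, defined as the smallest action at which the filtered homology receives the fundamental class (the unit). The defining feature I would use is that $c(D^*\Omega)$ is finite and realized by a genuine closed characteristic on $\partial(D^*\Omega)$, hence by a billiard trajectory of that length; equivalently, the absence of any orbit of action $\le L$ would force the truncated homology $SH^{(0,L]}(D^*\Omega)$ to carry no nontrivial class. It therefore suffices to prove $c(D^*\Omega)\le C_n\,\mathrm{inrad}(\Omega)$, and the argument becomes a matter of showing that a class must appear at low action.

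To feed in the inradius I would use the geometric reinterpretation $r=\mathrm{inrad}(\Omega)=\max_{q\in\Omega}d(q,\partial\Omega)$, so that $\Omega$ lies entirely inside the $r$-neighborhood of its own boundary; this thinness is the feature that should cap the minimal action. Concretely I would fix an inscribed ball $B=B(x_0,r)\subseteq\Omega$ and build an admissible Hamiltonian on $D^*\Omega$ that agrees with the round model of $D^*B$ near $B$, whose shortest orbit is the bouncing diameter of action $4r$, and that is steep in the remaining, possibly complicated, part of $\Omega$, so that its contributions land only at high action. Combining the model computation with a continuation argument should then yield a nonzero low-action class and hence the bound.

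The main obstacle is exactly this survival statement. Naive monotonicity under the inclusion $D^*B\subseteq D^*\Omega$ runs the wrong way: it bounds $c(D^*\Omega)$ \emph{below} by $4r$, paralleling the convex lower bound of Theorem~\ref{thm-ghomi}, and so cannot produce the desired \emph{upper} bound. The upper estimate must instead come from a filtration and continuation argument showing that the wild geometry of $\Omega\setminus B$ contributes only above the threshold $C_n r$ and does not annihilate the generator supported near $B$; it is this control of the far-away part, together with the covering or comparison bookkeeping that generates the dimensional constant $C_n$, that constitutes the technical heart of the proof, far more than the symplectic translation of the billiard dynamics itself.
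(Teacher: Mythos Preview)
This theorem is not proved in the paper at all: it is quoted from Irie~\cite{Ir} in the introduction purely as prior work, alongside the results of Ghomi, Viterbo, and Albers--Mazzucchelli, in order to contextualize the authors' own Theorems~\ref{main-application2} and~\ref{main-application}. There is therefore no ``paper's own proof'' against which to compare your proposal.

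That said, your sketch is roughly in the spirit of Irie's actual argument, which does proceed via a symplectic-homology capacity for the unit disc bundle. However, your outline remains at the level of a strategy rather than a proof: you correctly identify that monotonicity of the capacity under the inclusion $D^*B\subseteq D^*\Omega$ runs the wrong way for an upper bound, and you correctly flag the ``survival'' statement as the crux, but you do not indicate any mechanism for establishing it. Irie's key input is not a direct comparison with an inscribed ball; rather, he relates the symplectic-homology capacity of $D^*\Omega$ to the displacement energy of a fiber (or a neighborhood thereof) in the cotangent bundle, and it is this displacement-energy estimate that produces the inradius as the controlling scale. Your proposed Hamiltonian-with-steep-remainder construction does not, as stated, supply such a bound, and the phrase ``covering or comparison bookkeeping that generates the dimensional constant $C_n$'' is a placeholder rather than an argument. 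So while the symplectic reformulation is correct and the diagnosis of the difficulty is accurate, the proposal does not contain the idea that closes the gap.
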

We remark that the constant $C_n$ given in~\cite{Ir} is not explicit and 
no estimate for it is given. 
Moreover, note that on the class of convex bodies in $\R^n$,  results of
the type~$(\ref{Viterbo-result-on-vol})$ and~$(\ref{Irie-result-on-inrad})$ 
i.e., with {\em some} dimension-dependent  
constant $C_n$, are easily obtained by a standard
compactness argument. Hence, for this class, the main task is to give a good bound on
the constant $C_n$ as a function of the dimension. This is the
objective  in our Theorems~\ref{main-application2} and~\ref{main-application} above.
Finally, while this note was under preparation, we learned of the paper~\cite{AlM} where the following theorem, in which a dimension independent bound for $l$ is given in terms of the diameter, was proved:
\begin{theorem}[Albers-Mazzucchelli~\cite{AlM}]
Let $\Omega$ be a bounded domain with smooth boundary in ${\mathbb
R}^n$. Then there exists a billiard trajectory in $\Omega$ of length
$l$ with
\begin{equation*} l \leq C  \, diam(\Omega), \end{equation*}
where $C > 0 $ is a  constant independent of $n$, and $diam(\Omega) =
\inf \{ |v|  ; \, (v + \Omega) \cap \Omega = \emptyset \}$.
\end{theorem}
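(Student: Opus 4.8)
\bigskip

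\noindent\textbf{Proof proposal.}
The plan is to recognize $diam(\Omega)$ as a \emph{displacement energy} and to combine this with the correspondence between periodic billiard trajectories in $\Omega$ and closed characteristics on the boundary of the Lagrangian product $\Omega\times B$, where $B\subseteq\R^n$ denotes the Euclidean unit ball --- the same correspondence that, for convex bodies, underlies Theorem~\ref{Main-Theorem}. First I would fix a vector $v\in\R^n$ realizing $diam(\Omega)$, i.e.\ with $|v|=diam(\Omega)$ and $(v+\Omega)\cap\Omega=\emptyset$ (the infimum is attained because the set of admissible $v$ is closed and $|\cdot|$ is coercive). On $\R^{2n}=\R^n_q\times\R^n_p$, the linear Hamiltonian $H(q,p)=\langle v,p\rangle$ generates the translation flow $\phi^t(q,p)=(q+tv,p)$, so $\phi^1$ carries $\Omega\times B$ onto $(v+\Omega)\times B$, which is disjoint from $\Omega\times B$. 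Since $|p|\le 1$ on the compact tube $\bigcup_{t\in[0,1]}\phi^t(\Omega\times B)$, the oscillation of $H$ there equals $2|v|$, and a standard truncation argument shows that $\Omega\times B$ is Hamiltonianly displaceable with displacement energy $e(\Omega\times B)\le 2|v|=2\,diam(\Omega)$.

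Next I would apply the energy--capacity inequality $c(\Omega\times B)\le e(\Omega\times B)$ --- valid for the Hofer--Zehnder capacity, or for the homologically defined capacity used by Irie --- to obtain $c(\Omega\times B)\le 2\,diam(\Omega)$. The final step is to realize this capacity by a closed characteristic on $\partial(\Omega\times B)$ and, via the Benci--Giannoni description of the characteristic dynamics on such a product, to identify this characteristic with a periodic billiard trajectory in $\Omega$ of length exactly $c(\Omega\times B)$. Altogether this exhibits a billiard trajectory in $\Omega$ of length $l\le 2\,diam(\Omega)$, which is the assertion with $C=2$ --- a sharp constant, since the shortest periodic billiard trajectory in the Euclidean ball $B$, namely the back-and-forth motion along a diameter, has length $4=2\,diam(B)$.

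The step I expect to be the main obstacle is this last one in the genuinely non-convex setting. The product $\Omega\times B$ has a non-smooth boundary, with a ``corner'' along $\partial\Omega\times\partial B$, so one must first make sense of its characteristic dynamics and match the closed orbits with billiard trajectories in $\Omega$ (this is the Benci--Giannoni framework, which also underlies the theorems of Viterbo and Irie quoted above), and --- crucially --- one must know that the capacity is genuinely \emph{attained} by such an orbit rather than merely bounded by it, which is exactly why one should work with the Hofer--Zehnder capacity or with a homologically defined one rather than with an abstract capacity. Once this is in place, the two displayed inequalities give the bound immediately. An essentially equivalent but more self-contained route avoids capacities altogether: one runs a min--max for the length functional over the one-parameter family of domains $\Omega\cap(\Omega-tv)$, $t\in[0,1]$, which is non-empty for $t<1$ and empty at $t=1$; by construction the resulting min--max value is at most $2\,diam(\Omega)$, while it is also the length of an honest periodic billiard trajectory in $\Omega$.
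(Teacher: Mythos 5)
You should note at the outset that the paper does not prove this statement: it is quoted verbatim from Albers--Mazzucchelli \cite{AlM}, whose argument is a direct variational one (an approximation of the bounce problem in the spirit of Benci--Giannoni \cite{BG} together with a min--max estimate), with no symplectic capacities involved. So your route is genuinely different from the source. Its first two steps are essentially fine: the infimum defining $diam(\Omega)$ is attained (or can be handled with an $\varepsilon$), the truncated linear Hamiltonian $\langle v,p\rangle$ displaces $\Omega\times B$ with Hofer norm at most $2|v|+\varepsilon$, and the energy--capacity inequality $c_{HZ}(U)\le e(U)$ is available in $(\R^{2n},\omega_{st})$. This gives $c_{HZ}(\Omega\times B)\le 2\,diam(\Omega)$.

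The genuine gap is the step you yourself flag as the ``main obstacle'': for a non-convex $\Omega$ there is no result you can quote asserting that $c_{HZ}(\Omega\times B)$ (or indeed any capacity obeying the energy--capacity inequality) is realized by, or even bounds from above the length of, a periodic billiard trajectory in $\Omega$. In the paper this identification (Theorem~\ref{Cap_on_covex_sets}, Proposition~\ref{prop-about-the-1-1-corrospendence}, Theorem~\ref{Main-Theorem}) rests on Clarke's dual action principle, which needs convexity of $\Omega\times B$; without convexity the product need not even be star-shaped or of contact type, so one cannot invoke contact-type or almost-existence theorems to extract a closed characteristic with action at most $c_{HZ}$, and the corner along $\partial\Omega\times\partial B$ creates an additional difficulty that costs the paper its entire Appendix even in the convex case. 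The Benci--Giannoni framework is an approximation scheme for bounce orbits of Lagrangian systems; by itself it does not relate $c_{HZ}(\Omega\times B)$ to billiard lengths, and establishing that relation for general $\Omega$ is precisely the hard content of Viterbo's and Irie's theorems (Irie works with a Floer-theoretic capacity, for which you would still have to verify the energy--capacity inequality) and of the Albers--Mazzucchelli paper itself — so as written the argument is circular at its crucial point. Two smaller issues: the claim that the trajectory has length \emph{exactly} $c(\Omega\times B)$ is stronger than anything these methods give (only an upper bound is needed or available), and the closing ``self-contained'' min--max over the family $\Omega\cap(\Omega-tv)$ is a one-sentence sketch — it is not explained why the min--max value is the length of an honest billiard trajectory, nor why it is bounded by $2|v|$.
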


\noindent {\bf Notations:}
By a convex body we shall mean a compact convex
 set  with non-empty interior.  The class of convex bodies in ${\mathbb R}^{n}$
is denoted by  $\widetilde {\mathcal K}^{n}$, and the subclass of convex bodies
with smooth boundary by ${\mathcal K}^{n}$. A body is said to be strictly convex if it has strictly positive Gauss curvature at every point of its boundary. 
Given $\Sigma \in \widetilde  {\mathcal K}^n$, we denote by
$h_\Sigma : {\mathbb R}^n \rightarrow {\mathbb R}$ its support
function given by $h_\Sigma(u) = \sup \{ \langle x, u \rangle \ ; x \in \Sigma \}$,
where $\langle \cdot , \cdot \rangle$ stands for the standard inner product
in ${\mathbb R}^n$. We denote by  $g_\Sigma  : {\mathbb R}^n \rightarrow {\mathbb R}$
the gauge function $g_\Sigma(x) = \inf \{ r  \ | \  x \in  r\Sigma\}$ associated with $\Sigma$.
Note that when $\Sigma$ is centrally symmetric i.e., $\Sigma = -\Sigma$,
one has that $g_{\Sigma}(x)$ is a norm, which we denote by $\| x \|_{\Sigma}$. 
Furthermore, when $0\in int(\Sigma)$, one has that $h_\Sigma = g_{\Sigma^{\circ}}$, where  
$\Sigma^{\circ} = \{y \in {\mathbb R}^{n} \ | \
\langle x,y \rangle \leq 1, \ {\rm  for \ every \ } x \in \Sigma \}$ is the polar body of $\Sigma$.
For a smooth function $F\colon {\mathbb R}^n \rightarrow  {\mathbb R}$, we
write $\nabla F$ and $\nabla^2 F$ for its gradient and Hessian respectively.
Finally, given $\gamma \colon S^{1} \rightarrow {\mathbb R}^{2n}$,  we shall state claims holding for $\gamma(t)$ omitting the phrase ``for all $t\in S^1$'' so as not to needlessly complicate the text.

\noindent {\bf Structure of Paper:} The paper is organized as follows.
In Section~\ref{Perliminaries}, after providing the relevant background
from symplectic geometry and the theory of Finsler billiards, we
state our main results regarding the relation between the length of the shortest periodic
billiard trajectory and the Ekeland-Hofer-Zehnder capacity (Theorem~\ref{Main-Theorem}).
In Section~\ref{Sec-proof-of-main-results} we use this relation to prove our main results.
In Section~\ref{sec:gpdich} we prove a dichotomy between gliding and proper billiard trajectories. In the Appendix  we prove certain technical claims that
were used throughout the text.

\noindent {\bf Acknowledgement:} 
The second named author thanks Peter Albers, David Hermann, and Sergei Tabachnikov for stimulating discussions on billiards and dynamics.
We would also like to thank the anonymous referee for useful comments, and for pointing out a flaw  in an earlier version of this paper.
The first named author was partially  supported by ISF grant No. 247/11.
The second named author was partially supported by a Reintegration Grant SSGHD-268274 within the 7th European community framework programme, and by the ISF grant No. 1057/10. Both authors were partially supported by BSF grant number 2006079. 
\section{Preliminaries} \label{Perliminaries}

Before we turn to prove our main results, we provide some
relevant background from symplectic geometry, and the theory of
Finsler billiards.

\subsection{Symplectic capacities}

Consider the $2n$-dimensional Euclidean space ${\mathbb R}^{2n}  =
{\mathbb R}^n_q \times {\mathbb R}^n_p$  equipped with the linear
coordinates $(q_1,\ldots, q_n,p_1,\ldots,p_n)$,  the standard
symplectic structure $\omega_{st} = \sum dq \wedge dp$, and the
standard inner product $g_{st} = \langle \cdot, \cdot \rangle$.
Note that under the usual identification between ${\mathbb R}^{2n}$
and ${\mathbb C}^n$, these two structures are the real and the imaginary
 parts of the standard Hermitian inner product in ${\mathbb C}^n$ and
 $\omega_{st}(u, Jv) = \langle u, v \rangle$,  where $u,v \in {\mathbb R}^{2n}$,
 and  $J : {\mathbb R}^{2n}
\rightarrow {\mathbb R}^{2n}$  is the  standard complex structure on
${\mathbb R}^{2n}$ given by $J(q,p) = (-p,q)$.
Recall that a symplectomorphism of ${\mathbb R}^{2n}$ is a
diffeomorphism which preserves the symplectic structure i.e., $\psi
\in {\rm Diff}({\mathbb R}^{2n})$ such that $\psi^* \omega_{st} =
\omega_{st}$.

Symplectic capacities are symplectic invariants which, roughly
speaking, measure the symplectic size of subsets of ${\mathbb
R}^{2n}$. More precisely,

\begin{definition} \label{Def-sym-cap}
A symplectic capacity on $({\mathbb R}^{2n},\omega_{st})$ associates
to each  subset $U \subset {\mathbb R}^{2n}$ a number $c(U) \in
[0,\infty]$ such that the following three properties hold:

\noindent (P1) $c(U) \leq c(V)$ for $U \subseteq V$ (monotonicity)

\noindent (P2) $c \big (\psi(U) \big )= |\alpha| \, c(U)$ for  $\psi
\in {\rm Diff} ( {\mathbb R}^{2n} )$ such that $\psi^*\omega_{st} =
\alpha \, \omega_{st}$ (conformality)

\noindent (P3) $c \big (B^{2n}(r) \big ) = c \big (B^2(r) \times
{\mathbb C}^{n-1} \big ) = \pi r^2$ (nontriviality and
normalization).
\end{definition}

Here, $B^{2k}(r)$ stands for the open $2k$-dimensional ball of radius $r$.
Note that the third property disqualifies any volume-related
invariant, while the first two imply that for $U, V \subset {\mathbb
R}^{2n}$, a necessary condition for the existence of a
symplectomorphism $\psi $ 
 with $\psi(U) = V$, is $c(U) =c(V)$ for any symplectic capacity $c$.

It is a priori unclear that symplectic capacities exist. The first
examples were provided by Gromov~\cite{G} 
using pseudo-holomorphic curves techniques. 
Since Gromov's work, several other symplectic capacities were
constructed, such as the Hofer-Zehnder capacity~\cite{HZ,HZ1}, the
Ekeland-Hofer capacities~\cite{EH,EH1}, the displacement
energy~\cite{H1,LaMc}, spectral capacities~\cite{FGS,HZ,Oh,V2}, and
Hutchings'  embedded contact homology capacities~\cite{Hu1}, to name
a few. We refer the reader to~\cite{CHLS} for a detailed survey on
the theory of symplectic capacities.

\subsection{The Ekeland-Hofer-Zehnder capacity} \label{the-EHZ-cap-subsection}

Two important examples of symplectic capacities, which arose from the
study of periodic solutions of Hamiltonian systems, and play a fundamental
rule in this paper, are the Ekeland-Hofer capacity $c_{EH}$ introduced
 in~\cite{EH}, and the Hofer-Zehnder capacity $c_{HZ}$ introduced in~\cite{HZ1}.
As we shall see below, on the class of smooth convex bodies in ${\mathbb R}^{2n}$,
these two capacities coincide, and are given by the minimal action over all
closed characteristics on the boundary of the corresponding convex domain.
Hence, in what follows, we omit the general definition of these two capacities, and
give an equivalent definition which coincides with the standard
ones on the class of smooth convex bodies. This is done in
Theorem~\ref{Cap_on_covex_sets} below (cf. Proposition~\ref{proposition-EHZ-capacity}).

Recall that the restriction of the symplectic form
$\omega_{st}$ to a smooth closed connected hypersurface $\Sigma
\subset {\mathbb R}^{2n}$ defines a 1-dimensional subbundle
${ker}(\omega_{st}|\Sigma)$ whose integral curves comprise the
characteristic foliation of $\Sigma$. In other words, a {\it closed
characteristic} $\gamma$ on $\partial \Sigma$ is an embedded circle
in  $\partial \Sigma$ tangent to the characteristic line bundle
\begin{equation*} {\mathfrak S}_{\Sigma} = \{(x,\xi) \in T
\partial \Sigma \ | \ \omega_{st}(\xi,\eta) = 0 \ {\rm for \ all} \ \eta \in T_x
\partial \Sigma \}. \end{equation*}
The classical geometric problem of finding a closed characteristic 
has the following dynamical interpretation. If  the boundary
$\partial \Sigma$ is represented as a regular energy surface $\{x
\in {\mathbb  R}^{2n} \ | \ H(x) = {\rm const} \}$ of a smooth
Hamiltonian function $H : {\mathbb  R}^{2n} \rightarrow {\mathbb
R}$, then the restriction to $\partial \Sigma$ of the Hamiltonian
vector field $X_H$, defined by $i_{X_H} \omega_{st} = -dH$, is a section
of ${\mathfrak S}_{\Sigma}$. Thus, the image of the periodic
solutions of the classical Hamiltonian equation $\dot x= X_H(x) = J\nabla H(x)$ on
$\partial \Sigma$ are precisely the closed characteristics of
$\partial \Sigma$. In particular, the closed characteristics do not depend (up to
parametrization) on the choice of the Hamiltonian function. Indeed, if $\Sigma$ can
be represented as a regular level set of some other function $F \colon
{\mathbb  R}^{2n} \rightarrow {\mathbb R}$, then $X_{H} = d X_{F}$ on $\partial \Sigma$ for
some $d \neq 0$, and the corresponding Hamiltonian equations have the
same solutions up to  parametrization.

The action $A(\gamma)$ of a closed curve $\gamma$, which is the
enclosed symplectic area, is defined by $A(\gamma) = \int_{\gamma} \lambda$,
where  $\lambda =pdq$ is the Liouville 1-form whose differential is
$d \lambda = \omega_{st}$. Parametrizing $\gamma$ by $\gamma(t)$,
with $0 \leq t \leq 2 \pi$ and $\gamma(0)=\gamma(2\pi)$, the action
 becomes $A(\gamma) = {\frac 1 2} \int_{0}^{2\pi} \langle J \gamma(t),
 \dot \gamma(t) \rangle dt.$
The action spectrum of $\Sigma$ is defined as:
\begin{equation*}  {\cal L}(\Sigma) = \left \{ \, | \, {A}({\gamma}) \,  | \, ;
\, \gamma \ {\rm closed \ characteristic \ on} \ \partial \Sigma
\right \}.\end{equation*}

The following theorem, which as explained above serves here also as the definition 
of the Ekeland-Hofer and Hofer-Zehnder capacities for the class of smooth convex bodies, 
is a combination of results from~\cite{EH} and~\cite{HZ}, and is based on the dual 
action principle introduced by Clarke in~\cite{Cl} (cf.~\cite{R,W}).
\begin{theorem} \label{Cap_on_covex_sets} Let $\Sigma\subseteq {\mathbb
R}^{2n}$ be a convex bounded domain with smooth boundary $\partial
\Sigma$. Then there exists at least one closed characteristic $\gamma^*
\subset \partial \Sigma$ satisfying
\begin{equation*}  c_{EH}(\Sigma) = c_{HZ}(\Sigma) = { A}(\gamma^*) =  \min {\cal
L}( \Sigma). \end{equation*}
\end{theorem}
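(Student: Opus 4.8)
The plan is to establish the three equalities by means of Clarke's dual least-action principle and then to match the resulting number with each of the two capacities. After a translation assume $0\in\mathrm{int}(\Sigma)$ and set $H:=g_\Sigma^2$; this is a convex, nonnegative, positively $2$-homogeneous function with $\partial\Sigma=\{H=1\}$ a regular level set, so by the discussion preceding the theorem the closed characteristics of $\partial\Sigma$ are, up to reparametrization, the periodic orbits of $\dot x=J\nabla H(x)$. Its Legendre transform $H^\ast(y)=\sup_x(\langle x,y\rangle-H(x))$ is again convex and $2$-homogeneous, and since $\Sigma$ is bounded with $0$ in its interior there are constants $0<a\le b$ with $a|y|^2\le H^\ast(y)\le b|y|^2$. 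The object to minimize is Clarke's dual action functional
\[ \Phi(u)=\int_0^1 H^\ast(-J\dot u(t))\,dt \]
on the constraint set $\mathcal E=\{u\in W^{1,2}(S^1,\mathbb R^{2n}):\ \int_0^1 u\,dt=0,\ A(u)=1\}$, where $A(u)=\tfrac12\int_0^1\langle -J\dot u,u\rangle\,dt$.

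First I would run the direct method of the calculus of variations. The lower bound $\Phi(u)\ge a\|\dot u\|_{L^2}^2$ together with Wirtinger's inequality (valid since $u$ has zero mean) and the constraint $A(u)=1$ shows that $\Phi$ is bounded below by a positive constant and coercive on $\mathcal E$; hence a minimizing sequence is bounded in $W^{1,2}(S^1,\mathbb R^{2n})$. Extracting a weakly convergent subsequence, convexity and continuity of $\Phi$ give weak lower semicontinuity, while the compact embedding $W^{1,2}\hookrightarrow L^2$ makes $A$ weakly continuous on bounded sets, so $\mathcal E$ is weakly closed. Therefore the infimum $m:=\inf_{\mathcal E}\Phi$ is attained at some $u_0$, and $m>0$.

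Next I would read off a closed characteristic. The Euler–Lagrange equation for the constrained minimizer is $\nabla H^\ast(-J\dot u_0)=\mu\,(u_0+c)$ for a Lagrange multiplier $\mu$ and a constant $c\in\mathbb R^{2n}$; $2$-homogeneity forces $\mu>0$. Putting $x(t):=\mu^{-1}\bigl(\nabla H^\ast(-J\dot u_0(t))\bigr)$ and using that $\nabla H$ and $\nabla H^\ast$ are mutually inverse, one checks after a linear time rescaling that $x$ is a non-constant periodic solution of $\dot x=J\nabla H(x)$ lying on a level set $\{H=\mathrm{const}>0\}$, and the homogeneous dilation $x\mapsto\lambda x$ moves it onto $\partial\Sigma$, producing a closed characteristic $\gamma^\ast$ (embeddedness follows since a minimal-action orbit on a convex hypersurface is simple). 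Using Euler's identity $\langle y,\nabla H^\ast(y)\rangle=2H^\ast(y)$ one computes that $A(\gamma^\ast)=m$; the same computation, run in reverse, shows that every closed characteristic $\gamma$ gives, after the time normalization placing its associated dual loop in $\mathcal E$, a point of $\mathcal E$ at which $\Phi$ equals $A(\gamma)$, whence $A(\gamma)\ge m$. Thus $m=\min\mathcal L(\Sigma)=A(\gamma^\ast)$.

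Finally I would identify $m$ with the two capacities, where convexity of $\Sigma$ is essential. For $c_{HZ}$ one uses the description of the Hofer–Zehnder capacity through admissible Hamiltonians supported in $\Sigma$: the dual action principle in Rabinowitz's form shows that any such Hamiltonian with $\max H>m$ carries a non-trivial fast periodic orbit, so $c_{HZ}(\Sigma)\le m$; conversely, for each $\varepsilon>0$ one builds an admissible Hamiltonian with $\max H>m-\varepsilon$ and no fast orbits by fitting a radial bump just under the sublevel sets of $g_\Sigma^2$, so $c_{HZ}(\Sigma)\ge m$ — this is the computation in~\cite{HZ}. For $c_{EH}$, the Ekeland–Hofer capacity of $\Sigma$ is a minimax of the (regularized) dual action functional over $S^1$-invariant sets of positive cohomological index, and the point is that for \emph{convex} $\Sigma$ this functional is convex, so the minimax collapses to the minimum over the constraint sphere, namely $m$ (this is~\cite{EH}). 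The main obstacle is analytic regularity: $H=g_\Sigma^2$ is in general only $C^1$, so $H^\ast$ need not be $C^2$ and the Euler–Lagrange analysis and Legendre inversion above require care; the clean route is to prove the theorem first for strictly convex smooth $\Sigma$ (where $H^\ast\in C^2$) and then pass to the general smooth case by approximation, using continuity of $\Sigma\mapsto\min\mathcal L(\Sigma)$ and monotonicity of the capacities. Combining everything gives $c_{EH}(\Sigma)=c_{HZ}(\Sigma)=A(\gamma^\ast)=m=\min\mathcal L(\Sigma)$.
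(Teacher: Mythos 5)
The paper does not actually prove Theorem~\ref{Cap_on_covex_sets}: it is quoted as a combination of results from~\cite{EH} and~\cite{HZ} resting on Clarke's dual action principle~\cite{Cl}, and your outline reconstructs exactly that standard argument (minimize the dual action functional on the constrained loop space, extract a closed characteristic from the Euler--Lagrange equation via Legendre inversion and Euler's identity, and invoke~\cite{HZ} and~\cite{EH} for the identification of the resulting minimal action with $c_{HZ}$ and $c_{EH}$), which is also the route the paper's Appendix follows for the variational core in the more general non-smooth setting. So the approach is essentially the same as the paper's cited sources; the one step left thinner than in those sources is the existence of $\gamma^*$ when $\Sigma$ is smooth but not strictly convex: your strict-convexity-plus-approximation scheme yields convergence of the capacities and of $\min{\cal L}$, but attainment in the limit still requires a compactness argument for the approximating characteristics (or, alternatively, the subdifferential Euler--Lagrange analysis carried out in the paper's Appendix, which avoids strict convexity altogether).
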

Such a closed characteristic which minimizes the action (note
that there might be more than one), is called throughout the
text a capacity carrier for $\Sigma$. In addition, we refer to the
coinciding Ekeland-Hofer and Hofer-Zehnder capacities on the class ${\mathcal K}^{2n}$,
of convex bounded  domains in ${\mathbb R}^{2n}$ with non-empty interior and smooth boundary,
as the Ekeland-Hofer-Zehnder capacity, and denote it from here onwards by $c_{_{EHZ}}$.

Another characterization of the Ekeland-Hofer-Zehnder capacity,
which shall be useful for us later on,
is given in the following proposition, the proof of which can be found in~\cite{HZ,MZ}.
Let
$W^{1,2}(S^1,{\mathbb R}^{2n})$ be the Hilbert space of
absolutely continuous $2 \pi$-periodic functions whose  derivative
exist almost everywhere and belongs to $L_2(S^1,{\mathbb R}^{2n})$, and set
\begin{equation*} 
 {\cal E} = \Bigl \{ z \in W^{1,2}(S^1,{\mathbb
R}^{2n}) \ ; \  \int_0^{2 \pi} z(t) dt = 0, \  {\frac 1 2} \int_0^{2
\pi} \langle Jz(t), \dot{z}(t) \rangle dt = 1 \Bigr \}. \end{equation*}

\begin{proposition} \label{proposition-EHZ-capacity}
For  $\Sigma \in {\mathcal K}^{2n}$ 
one has:
\begin{equation*}\label{ep}
 c_{_{\it EHZ}}(\Sigma)=  \min_{z \in
{\cal E}} {\frac {\pi} {2}}
  \int_0^{2 \pi} h_\Sigma^{2}(\dot{z}(t))dt. \end{equation*}
\end{proposition}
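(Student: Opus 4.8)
The plan is to derive the variational formula in Proposition~\ref{proposition-EHZ-capacity} from the dual action principle underlying Theorem~\ref{Cap_on_covex_sets}. I would start from the known fact that $c_{_{EHZ}}(\Sigma) = \min \mathcal{L}(\Sigma)$, the minimal action over closed characteristics on $\partial\Sigma$, and recall that by Clarke's dual action principle (as stated in the references \cite{HZ,MZ}) this minimum can be computed as a constrained minimization of a dual action functional. Concretely, assume without loss of generality that $0 \in \operatorname{int}(\Sigma)$, so $h_\Sigma = g_{\Sigma^\circ}$ is a gauge function; a closed characteristic can be represented, after reparametrization, as a curve $x$ with $\dot x = \mu J \nabla H(x)$ for a Hamiltonian $H$ having $\partial\Sigma$ as a regular level set. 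The dual formulation replaces minimization over such $x$ by minimization over its derivative $u = \dot x$, subject to $\int_0^{2\pi} u(t)\,dt = 0$ (so that $u$ integrates to a closed loop), with the action expressed through the Legendre transform of $H$.

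The key computational step is the choice of Hamiltonian: take $H(x) = \tfrac14 g_\Sigma(x)^2$ (or a convenient positive multiple), whose Legendre dual is $H^*(y) = \tfrac14 h_\Sigma(y)^2$ up to scaling, using $h_\Sigma = g_{\Sigma^\circ}$ and the fact that the Legendre transform of $\tfrac14 g_\Sigma^2$ is $\tfrac14 g_{\Sigma^\circ}^2$. After substituting $z = \dot x$ one finds that, along a characteristic, the action $A(\gamma)$ equals (up to the correct universal constant $\tfrac\pi2$) the quantity $\int_0^{2\pi} h_\Sigma^2(\dot z(t))\,dt$, and the constraint set $\mathcal{E}$ — mean-zero $W^{1,2}$ loops with normalized symplectic area $\tfrac12\int \langle Jz,\dot z\rangle\,dt = 1$ — is precisely the natural domain after rescaling the action to $1$. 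Then one checks that every $z \in \mathcal{E}$ corresponds (via $z$ being the primitive, i.e.\ $\dot{(\text{something})}$, wait: here $z$ plays the role of $x$ and $\dot z$ the role of $\dot x = u$) to an admissible competitor for the dual problem, and conversely that the minimizer is attained at a genuine closed characteristic, so the two minima agree.

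I would organize the proof as: (i) reduce to $0 \in \operatorname{int}(\Sigma)$ by translation invariance of both sides; (ii) recall the dual action principle in the precise form it is stated in \cite{HZ,MZ}, identifying $c_{_{EHZ}}(\Sigma)$ with a constrained minimum of the dual functional; (iii) perform the change of variables and the Legendre-transform computation, matching the functional $\tfrac\pi2 \int_0^{2\pi} h_\Sigma^2(\dot z)\,dt$ and the constraint $z \in \mathcal{E}$ with the dual problem; (iv) verify the normalization constant against $(\mathrm{P3})$, e.g.\ by testing on $\Sigma = B^{2n}$, where $h_\Sigma(u) = |u|$ and the minimizer is $z(t) = \tfrac{1}{\sqrt\pi}(\cos t, \sin t, 0, \dots)$ type circle, giving $\tfrac\pi2 \int_0^{2\pi} |\dot z|^2\,dt = \pi$, consistent with $c_{_{EHZ}}(B^{2n}) = \pi$.

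The main obstacle I anticipate is the careful bookkeeping of constants and reparametrizations in step (iii): the action $A(\gamma)$ of a closed characteristic is independent of parametrization, whereas the functional $\int_0^{2\pi} h_\Sigma^2(\dot z)\,dt$ is not, so one must check that the constraint $\tfrac12\int\langle Jz,\dot z\rangle\,dt = 1$ exactly pins down the parametrization that makes the two quantities proportional with the stated constant $\tfrac\pi2$ — in effect, that on the minimizer $h_\Sigma(\dot z)$ is constant in $t$ (the speed is constant in the gauge metric), which is the Euler–Lagrange condition for the constrained problem. The cleanest route is to cite the dual action principle in the form already available in the literature and present steps (iii)–(iv) as the verification that our formula is a reformulation of it, rather than reproving Clarke's principle from scratch; nonetheless, making the Legendre-transform identity $(\tfrac14 g_\Sigma^2)^* = \tfrac14 h_\Sigma^2$ and the constant-matching fully explicit is where the real work lies.
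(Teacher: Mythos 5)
Your overall route is the paper's own: the paper gives no independent argument for this proposition, but cites the dual action principle treatment in \cite{HZ,MZ}, and its Appendix implements exactly this Clarke-duality scheme when proving the non-smooth analogues (Proposition~\ref{proposition-EHZ-capacity-non-smooth}, Lemmas~\ref{Lemma-about-weak-critical} and~\ref{lemma-about-correspond-crit-pt}). So in spirit your proposal is the same proof; steps (i), (ii) and (iv) are fine, and attainment of the minimum (the ``min'' rather than ``inf'') is part of what you would be citing --- in the Appendix it is obtained from weak lower semicontinuity of $I_{\Sigma}$ on ${\cal E}$.

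Two details in your step (iii) need correcting, and the second is where the sketch would actually break if taken literally. First, the Legendre identity: the Legendre transform of $h_\Sigma^2$ is $\tfrac14 h_{\Sigma^{\circ}}^2=\tfrac14 g_\Sigma^2$, equivalently the Legendre transform of $\tfrac14 g_\Sigma^2$ is $h_\Sigma^2$, not $\tfrac14 g_{\Sigma^{\circ}}^2$; the factor matters once you start fixing normalizations. Second, the dictionary between minimizers and characteristics is not ``$z$ is the orbit and $\dot z$ its velocity.'' If $z$ itself were (a reparametrization of) a closed characteristic on $\partial\Sigma$, the constraint $\tfrac12\int_0^{2\pi}\langle Jz,\dot z\rangle\,dt=1$ would force that loop's action to equal $1$, and $\tfrac{\pi}{2}\int_0^{2\pi} h_\Sigma^2(\dot z)\,dt$ (which is not parametrization-invariant) would not compute its action, so ``the two minima agree'' would not follow. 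The correct correspondence, as in Lemma~\ref{lemma-about-correspond-crit-pt} (and \cite{AAO1}), is: a minimizer satisfies the multiplier relation $\partial h_\Sigma^2(\dot z)\ni\lambda Jz+\alpha$ with $\lambda=I_\Sigma(z)$, and after Legendre inversion the closed characteristic is the affine image of $Jz$ given by $\gamma=\bigl(\pi/(2\lambda)\bigr)^{1/2}(\lambda Jz+\alpha)\subset\partial\Sigma$, for which $A(\gamma)=\tfrac{\pi}{2}I_\Sigma(z)$; conversely $z$ is recovered from $\gamma$ by subtracting its mean, rescaling so the enclosed area is $1$, and applying $J^{-1}$. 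The rotation by $J$ is what turns the gradient-type multiplier equation into the characteristic equation, and the rescaling is what lets area-normalized loops encode arbitrary actions. With these two points fixed, your constant check on $B^{2n}$ and the rest of the plan go through.
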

 
\subsection{Singular convex energy levels}

Although the above definition of the Ekeland-Hofer-Zehnder capacity $c_{_{\it EHZ}}$ was
given only in the class of convex bodies with smooth boundary, it can be naturally extended
to the class $\widetilde {\mathcal K}^{2n}$ of convex sets in
${\mathbb R}^{2n}$ with non-empty interior (see e.g.~\cite{Ku,Ku1}). Indeed, this is an immediate consequence
 of the following  lemma, the proof of which is straightforward and thus omitted.

\begin{lemma} \label{tech-lemma-about-extension}
Let $f:  {\cal K}^{2n} \rightarrow [0,\infty]$ be 
homogeneous, and monotone with respect to inclusion. Then there is a
unique continuous\footnote{With respect to the Hausdorff topology on
the class of convex bodies.} extension $\widetilde f:  \widetilde
{\cal K}^{2n} \rightarrow [0,\infty]$, given by
\begin{equation*} \label{def-of-extension} \widetilde f(\Sigma) =
\inf \{ f(\Sigma') \ | \ \Sigma' \in  {\cal K}^{2n} , \ {\rm such \ that } \ \Sigma \subset
\Sigma' \},
\end{equation*}
which is monotone,
homogeneous, and coincides with $f$ on the class ${\cal K}^{2n} $.
\end{lemma}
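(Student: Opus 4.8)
Before the author's proof, here is how I would proceed. Throughout, ``homogeneous'' is taken to mean $f(\lambda\Sigma)=\lambda^{d}f(\Sigma)$ for all $\lambda>0$ and a fixed $d>0$.

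The four order‑theoretic properties come essentially for free, so I would dispatch them first. The infimum defining $\widetilde f(\Sigma)$ runs over a non‑empty family, since any Euclidean ball containing the compact set $\Sigma$ can be slightly enlarged to a smooth convex body containing $\Sigma$; hence $\widetilde f\colon\widetilde{\cal K}^{2n}\to[0,\infty]$ is well defined. If $\Sigma\in{\cal K}^{2n}$ then $\Sigma$ itself is an admissible competitor, giving $\widetilde f(\Sigma)\le f(\Sigma)$, while monotonicity of $f$ gives $f(\Sigma)\le f(\Sigma')$ for every admissible $\Sigma'\supseteq\Sigma$, so $f(\Sigma)\le\widetilde f(\Sigma)$; thus $\widetilde f$ extends $f$. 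If $\Sigma_{1}\subseteq\Sigma_{2}$, every body admissible for $\Sigma_{2}$ is admissible for $\Sigma_{1}$, so passing to infima gives $\widetilde f(\Sigma_{1})\le\widetilde f(\Sigma_{2})$, i.e. $\widetilde f$ is monotone. Finally, the substitution $\Sigma'\mapsto\lambda\Sigma'$ identifies the competitors for $\lambda\Sigma$ with the $\lambda$‑dilates of the competitors for $\Sigma$, so homogeneity of $f$ transfers to $\widetilde f(\lambda\Sigma)=\lambda^{d}\widetilde f(\Sigma)$.

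The one substantial point is continuity of $\widetilde f$ in the Hausdorff metric, and I would obtain it by squeezing a body between inner and outer homothets of a fixed one. Fix $\Sigma_{0}\in\widetilde{\cal K}^{2n}$; after a translation we may assume $B^{2n}(r)\subseteq\Sigma_{0}$ for some $r>0$ (harmless for the bodies appearing in this paper, all of which contain the origin in their interior). It is elementary that if $\Sigma\in\widetilde{\cal K}^{2n}$ and $d_{H}(\Sigma,\Sigma_{0})\le\varepsilon<r$ then
\begin{equation*}
(1-\varepsilon/r)\,\Sigma_{0}\ \subseteq\ \Sigma\ \subseteq\ (1+\varepsilon/r)\,\Sigma_{0},
\end{equation*}
since $\Sigma\subseteq\Sigma_{0}+\varepsilon B^{2n}(1)\subseteq(1+\varepsilon/r)\Sigma_{0}$ and, dually, $(1-\varepsilon/r)\Sigma_{0}$ lies in the inner parallel body of $\Sigma_{0}$ at distance $\varepsilon$, which in turn lies in $\Sigma$. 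Applying the monotonicity and homogeneity of $\widetilde f$ just established,
\begin{equation*}
(1-\varepsilon/r)^{d}\,\widetilde f(\Sigma_{0})\ \le\ \widetilde f(\Sigma)\ \le\ (1+\varepsilon/r)^{d}\,\widetilde f(\Sigma_{0}),
\end{equation*}
and letting $\varepsilon\to0$ gives $\widetilde f(\Sigma)\to\widetilde f(\Sigma_{0})$; the estimate is uniform and also covers the cases $\widetilde f(\Sigma_{0})\in\{0,\infty\}$.

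For uniqueness, let $\widehat f$ be any continuous extension of $f$ to $\widetilde{\cal K}^{2n}$. By continuity, $\widehat f(\Sigma)=\lim_{j}f(\Sigma_{j})$ for every sequence of smooth bodies $\Sigma_{j}\to\Sigma$, such sequences existing by density of ${\cal K}^{2n}$ in $\widetilde{\cal K}^{2n}$. Using outer approximations $\Sigma_{j}\supseteq\Sigma$ with $\Sigma_{j}\to\Sigma$ gives $f(\Sigma_{j})\ge\widetilde f(\Sigma)$ for all $j$, hence $\widehat f(\Sigma)\ge\widetilde f(\Sigma)$. For the reverse inequality I would first upgrade the defining formula to $\widetilde f(\Sigma)=\inf\{f(\Sigma')\,:\,\Sigma'\in{\cal K}^{2n},\ \Sigma\subseteq\operatorname{int}\Sigma'\}$: given a smooth $\Sigma'\supseteq\Sigma$, the dilates $(1+\delta)\Sigma'$ are smooth, contain $\Sigma$ in their interior because $0\in\operatorname{int}\Sigma'$, and $f((1+\delta)\Sigma')=(1+\delta)^{d}f(\Sigma')\to f(\Sigma')$. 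Now for any smooth $\Sigma'$ with $\Sigma\subseteq\operatorname{int}\Sigma'$ one has $\Sigma_{j}\subseteq\Sigma'$ for all large $j$, so $f(\Sigma_{j})\le f(\Sigma')$ by monotonicity and $\widehat f(\Sigma)\le f(\Sigma')$; taking the infimum over such $\Sigma'$ and using the upgraded formula gives $\widehat f(\Sigma)\le\widetilde f(\Sigma)$. Hence $\widehat f=\widetilde f$.

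The main obstacle is precisely the continuity statement, and inside it the elementary squeezing between homothets; everything else is bookkeeping with the infimum. The only delicate point is the normalization placing the origin in the interior of the body — this is what makes dilations, and hence the homogeneity of $f$, usable — and it is automatic for the convex bodies relevant in this paper.
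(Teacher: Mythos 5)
The paper itself never proves this lemma (it is stated with ``the proof\ldots is straightforward and thus omitted''), so your argument can only be measured on its own terms; and most of it is indeed the expected bookkeeping, done correctly: the infimum is over a non-empty family, $\widetilde f$ extends $f$, is monotone and homogeneous, and the homothety sandwich $(1-\varepsilon/r)\Sigma_0\subseteq\Sigma\subseteq(1+\varepsilon/r)\Sigma_0$ for $d_H(\Sigma,\Sigma_0)\le\varepsilon<r$ and $B^{2n}(r)\subseteq\Sigma_0$ is the right tool and is justified correctly. Note also that once continuity of $\widetilde f$ is established, uniqueness is immediate from density of ${\cal K}^{2n}$ in $\widetilde{\cal K}^{2n}$ (two continuous functions agreeing on a dense set coincide), so your separate uniqueness argument, with its upgraded infimum formula, is redundant.

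The genuine gap is the sentence ``after a translation we may assume $B^{2n}(r)\subseteq\Sigma_0$''. The hypotheses are only monotonicity and homogeneity, and homogeneity concerns dilations centered at the origin; translating $\Sigma_0$ changes the values of $f$ and $\widetilde f$ in a way these hypotheses do not control, so this normalization is not available (the same issue recurs in your uniqueness step, where $\Sigma'\subseteq {\rm int}\,(1+\delta)\Sigma'$ needs $0\in {\rm int}\,\Sigma'$). This is not cosmetic: as literally stated the lemma can fail. For a fixed unit vector $v$, the function $f(\Sigma)=\bigl(\sup\{\lambda\ge 0 \ ; \ \lambda v\in\Sigma\}\bigr)^2$ (with $\sup\emptyset=0$) is monotone and $2$-homogeneous on ${\cal K}^{2n}$, yet it jumps along smooth balls tangent to the ray ${\mathbb R}_{+}v$ as they are pushed off the ray, so neither $f$ nor the infimum formula admits a continuous extension. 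What rescues the statement in this paper is that it is only applied to $c_{_{EHZ}}$, which is translation invariant (translations are symplectomorphisms, property (P2) of Definition~\ref{Def-sym-cap}); with translation invariance added as a hypothesis — or with the statement restricted to bodies containing the origin in their interior — your sandwich argument closes without change. You flagged the point yourself, but it must enter as an assumption actually used, not be waved through as ``harmless''.
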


In what follows, we  denote by $ \widetilde c_{_{\it EHZ}}$ the
unique extension of the Ekeland-Hofer-Zehnder capacity to the class
$\widetilde {\cal K}^{2n} $ provided by
Lemma~\ref{tech-lemma-about-extension}. Note that $ \widetilde
c_{_{\it EHZ}}$ is a symplectic capacity on this class i.e., it satisfies
the assumptions of Definition~\ref{Def-sym-cap}.
Moreover, Proposition~\ref{proposition-EHZ-capacity} above 
extends to the class $\widetilde {\cal K}^{2n} $ verbatim (cf.~\cite{Ku,Ku1}) i.e., 
\begin{proposition} \label{proposition-EHZ-capacity-non-smooth}
For any convex body $\Sigma \in \widetilde {\cal K}^{2n}$ 
one has:
\begin{equation*} 
\widetilde  c_{_{\it EHZ}}(\Sigma)=  \min_{z \in
{\cal E}}
  {\frac {\pi} {2 }} \int_0^{2 \pi} h_{ \Sigma}^{2}(\dot{z}(t))dt. \end{equation*}
\end{proposition}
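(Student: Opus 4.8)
The plan is to bootstrap from the smooth case, Proposition~\ref{proposition-EHZ-capacity}, using the characterization of $\widetilde c_{_{\it EHZ}}$ as an outer infimum over smooth bodies given by Lemma~\ref{tech-lemma-about-extension}. For $\Sigma\in\widetilde{\cal K}^{2n}$ write $I(\Sigma):=\inf_{z\in{\cal E}}\tfrac{\pi}{2}\int_0^{2\pi}h_\Sigma^{2}(\dot z(t))\,dt$; since $h_\Sigma(v)\le R\,|v|$ whenever $\Sigma\subseteq R\,B^{2n}$, the integrand lies in $L^1(S^1)$ for every $z\in W^{1,2}(S^1,{\mathbb R}^{2n})$, so $I(\Sigma)$ is a well-defined finite nonnegative number. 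We must prove two things: (i) $\widetilde c_{_{\it EHZ}}(\Sigma)=I(\Sigma)$, and (ii) the infimum defining $I(\Sigma)$ is attained. For one direction of (i): because $\Sigma\subseteq\Sigma'$ implies $h_\Sigma\le h_{\Sigma'}$ pointwise, the functional $I$ is monotone under inclusion, hence $I(\Sigma)\le I(\Sigma')=c_{_{\it EHZ}}(\Sigma')$ for every smooth $\Sigma'\supseteq\Sigma$; taking the infimum over such $\Sigma'$ and invoking Lemma~\ref{tech-lemma-about-extension} gives $I(\Sigma)\le\widetilde c_{_{\it EHZ}}(\Sigma)$.

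For the reverse inequality I would approximate $\Sigma$ from outside. Pick smooth convex bodies $\Sigma_k$ with $\Sigma\subseteq\Sigma_k\subseteq\Sigma+\tfrac1k B^{2n}$ (standard, e.g.\ mollify $\Sigma+\tfrac1{2k}B^{2n}$). Then $h_\Sigma\le h_{\Sigma_k}\le h_\Sigma+\tfrac1k|\cdot|$, so for every $v\in{\mathbb R}^{2n}$ one has $h_{\Sigma_k}^{2}(v)-h_\Sigma^{2}(v)\le\tfrac1k|v|\bigl(2h_\Sigma(v)+\tfrac1k|v|\bigr)\le\tfrac{C}{k}|v|^{2}$, with $C$ depending only on an upper bound $R$ for $\Sigma$. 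Fixing $z\in{\cal E}$ and applying Proposition~\ref{proposition-EHZ-capacity} to each smooth $\Sigma_k$,
\[
\widetilde c_{_{\it EHZ}}(\Sigma)\ \le\ c_{_{\it EHZ}}(\Sigma_k)\ =\ I(\Sigma_k)\ \le\ \tfrac{\pi}{2}\!\int_0^{2\pi}\!h_{\Sigma_k}^{2}(\dot z)\,dt\ \le\ \tfrac{\pi}{2}\!\int_0^{2\pi}\!h_\Sigma^{2}(\dot z)\,dt+\tfrac{C\pi}{2k}\,\|\dot z\|_{L^2}^{2}.
\]
Letting $k\to\infty$ and then taking the infimum over $z\in{\cal E}$ yields $\widetilde c_{_{\it EHZ}}(\Sigma)\le I(\Sigma)$, completing step (i).

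For step (ii) I would run the direct method of the calculus of variations, exactly as in the smooth case treated in \cite{HZ,MZ}. Let $z_j\in{\cal E}$ be a minimizing sequence for $I(\Sigma)$. The preliminary point is a coercivity estimate bounding $\|\dot z_j\|_{L^2}$ (and hence, via $\int_0^{2\pi}z_j=0$ and the Wirtinger inequality, $\|z_j\|_{W^{1,2}}$) in terms of $\int_0^{2\pi}h_\Sigma^{2}(\dot z_j)$: this is immediate when $0\in\mathrm{int}(\Sigma)$, where $h_\Sigma(v)\ge r|v|$ for some $r>0$, and the general case reduces to it, either by translating $\Sigma$ (the capacity, hence $I$, being translation invariant once the equality in (i) is known) or directly, using that a closed loop cannot be monotone in any fixed direction, so the mass of $\dot z_j$ pointing into a half-space of directions is controlled by its mass pointing into the opposite half-space, where $h_\Sigma$ is bounded below. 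Having bounded $z_j$ in $W^{1,2}$, pass to a subsequence with $z_j\rightharpoonup z^*$ in $W^{1,2}$; the compact embedding $W^{1,2}(S^1)\hookrightarrow L^2(S^1)$ then gives $z_j\to z^*$ in $L^2$ and $\dot z_j\rightharpoonup\dot z^*$ in $L^2$. This suffices to pass both constraints to the limit: $\int_0^{2\pi}z^*=0$ trivially, and $\tfrac12\int_0^{2\pi}\langle Jz_j,\dot z_j\rangle\to\tfrac12\int_0^{2\pi}\langle Jz^*,\dot z^*\rangle$ since it pairs an $L^2$-strongly convergent sequence with an $L^2$-weakly convergent one, so $z^*\in{\cal E}$. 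Finally, since $v\mapsto h_\Sigma^{2}(v)$ is convex and nonnegative, the functional $z\mapsto\int_0^{2\pi}h_\Sigma^{2}(\dot z)$ is convex and strongly, hence weakly sequentially, lower semicontinuous, so $\tfrac{\pi}{2}\int_0^{2\pi}h_\Sigma^{2}(\dot z^*)\le\liminf_j\tfrac{\pi}{2}\int_0^{2\pi}h_\Sigma^{2}(\dot z_j)=I(\Sigma)$, and $z^*$ realizes the minimum. I expect the coercivity estimate in step (ii) to be the only point needing genuine care; everything else is elementary or a verbatim repetition of the compactness arguments already underlying Proposition~\ref{proposition-EHZ-capacity}. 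Alternatively, one may simply observe that the proof of Proposition~\ref{proposition-EHZ-capacity} in \cite{HZ,MZ} nowhere uses smoothness of $\partial\Sigma$, only its convexity and the finiteness of $h_\Sigma$, and therefore carries over to $\widetilde{\cal K}^{2n}$ word for word (cf.\ \cite{Ku,Ku1}).
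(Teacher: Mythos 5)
Your route is essentially the paper's: you identify $\widetilde c_{_{\it EHZ}}(\Sigma)$ with the infimum by combining Lemma~\ref{tech-lemma-about-extension} with outer approximation by smooth bodies and the smooth case (Proposition~\ref{proposition-EHZ-capacity}) -- supplying the details the paper dismisses as ``not hard to check'' -- and you prove attainment by the same direct method (minimizing sequence, weak $W^{1,2}$ compactness, passing the two constraints to the limit, lower semicontinuity). Your appeal to weak sequential lower semicontinuity of convex integrands replaces the paper's explicit subgradient estimate with a measurable selection; that is a legitimate and slightly cleaner variant of the same step.

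The one point that does not close is exactly the one you flagged: bodies with $0\notin\mathrm{int}(\Sigma)$, and neither of your two remedies works as stated. First, the ``half-space mass'' argument only controls $\int_0^{2\pi}|\dot z_j|\,dt$ in the degenerate directions, not $\int_0^{2\pi}|\dot z_j|^2dt$, and coercivity genuinely fails there: if $0\in\partial\Sigma$, pick $v_0\in N_\Sigma(0)$, so $h_\Sigma(sv_0)=0$ for $s\ge 0$; grafting onto a fixed loop an arbitrarily fast, arbitrarily short excursion in the direction $v_0$ (with the return spread slowly over the rest of the period and the action renormalized) keeps $I_\Sigma$ bounded while $\|\dot z\|_{L^2}\to\infty$, so a minimizing sequence need not be bounded in $W^{1,2}$. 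Second, the translation reduction is not automatic: $I_\Sigma$ is \emph{not} translation invariant as a functional on ${\cal E}$ (since $h_{\Sigma+x_0}=h_\Sigma+\langle x_0,\cdot\rangle$); by your step (i) only its infimum is, and equality of infima does not transfer attainment from $\Sigma-x_0$ to $\Sigma$. Note also that convexity of $h_\Sigma^2$ -- which both your weak-lsc argument and the paper's subgradient inequality use -- already requires $h_\Sigma\ge 0$, i.e.\ $0\in\Sigma$. The paper's own attainment argument implicitly carries the normalization $0\in\mathrm{int}(\Sigma)$ (the bounds it quotes from \cite{HZ} and \cite{AAO1} need a ball about the origin inside $\Sigma$); the general position is then recovered through the correspondence with generalized closed characteristics (Lemma~\ref{lemma-about-correspond-crit-pt}): translating $\Sigma$ translates its characteristics without changing their action, so a minimizer for the centered body yields, via ${\cal F}$, a minimizer of $I_\Sigma$. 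If you either restrict your direct-method step to $0\in\mathrm{int}(\Sigma)$ and quote that correspondence, or give an honest transfer-of-attainment argument, your proof is complete.
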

The proof of this proposition is given in the Appendix of the paper.
Furthermore, as in the smooth case,  $\widetilde c_{_{\it EHZ}}(\Sigma)$ is given by the minimal action among
 (generalized) closed characteristics on the boundary of $\Sigma$.
To state this precisely,
we first introduce some notations. We denote the unit outward normal cone of $\Sigma \in {\cal K}^{2n}$ at $x \in {\mathbb R}^{2n}$ by
$$ N_{\Sigma}(x) = \{ u \in {\mathbb R}^{2n} \ | \ \langle u, x-y \rangle \geq 0, \ {\rm for \ every \ } y \in \Sigma \}. $$
This is a set-valued vector field, which for a smooth point 
$x \in \partial \Sigma$ 
equals ${\mathbb R}_{+}\hat{n}$, where $\hat{n}$ is the usual normalized outward normal.
Next, let
$\partial H$ denote the subdifferential of
a function $H \colon {\mathbb R}^{2n} \rightarrow {\mathbb R}$ i.e.,
$$ \partial H (x) := \{ u  \in {\mathbb R}^{2n} \, | \, H(y) \geq H(x)
+ \langle u, y- x \rangle, \ {\rm for \ all \ } y \in {\mathbb R}^{2n} \}.  $$
It is well known (see e.g.,~\cite{Sch}) that for a convex function $H$,
the subdifferential is a non-empty convex subset of ${\mathbb R}^{2n}$,
and that $H$ is differentiable at $x$ if and only if $\partial H(x)$ is a singleton.
Each element of $\partial H(x)$ is called a subgradient of $H$ at $x$.
\begin{definition} A generalized  closed characteristic on a
(possibly singular)  convex hypersurface $\Sigma \subset {\mathbb R}^{2n}$
is the image of a piecewise smooth loop $\gamma : S^1 \rightarrow \partial \Sigma$  which satisfies that for every $t \in S^1$ one has:
\begin{equation} \label{def-of-gen-clos-char}
\dot \gamma^{\pm}(t) \in J N_{\Sigma}(\gamma(t)). 
\end{equation}
 \end{definition}

Here, the loop $\gamma \colon S^1 \rightarrow {\mathbb R}^{2n}$ is
said to be piecewise-smooth if it is continuous, has left and right derivatives at all points, and there is some
zero-measure set $F \subset S^1$ such that $\gamma$ is smooth on $S^1
\setminus F$.  Moreover, it is not difficult to check that condtion~$(\ref{def-of-gen-clos-char})$ above is equivalent to the fact  
 that there are vectors $u^{\pm}(t) \in  J \partial (g_{\Sigma})^{\alpha}(\gamma(t)) $ for some (any) $\alpha \geq 1$ such that 
$\dot \gamma^{\pm} (t) \, || u^{\pm}(t)$.

It is worthy to note that in fact the condition $\gamma(S^1) \subset \partial \Sigma$ 
can be weakened to, say, $\gamma(0) \in \partial \Sigma$, since the assumption on $\dot{\gamma}$, together with the fact that $\gamma$ is periodic, and that $\Sigma$ is convex, already implies that $\gamma(t)\in \partial \Sigma$ for all $t$, see \cite{Ku1}. 
Moreover, we remark that in contrast with characteristics on hypersurfaces with smooth boundary,
where (the image of) two characteristics are either disjoint or coincide, two different
generalized characteristics can  intersect. 

We are now in a position to describe the extension of Proposition~\ref{proposition-EHZ-capacity}
to the class $\widetilde {\mathcal K}^{2n}$. We remark that the proposition below is stated in~\cite{Ku1} and proved
in~\cite{Ku}. However, as of yet, we did not manage to obtain a copy of~\cite{Ku} and provide an independent proof in the Appendix.

\begin{proposition}\label{prop-about-the-1-1-corrospendence}
Let $\Sigma \in \widetilde {\cal K}^{2n}$. Then,
\begin{equation} \label{ANS1}
\widetilde  c_{_{\it EHZ}}(\Sigma)=  \min_{z \in
{\cal E}}
  {\frac {\pi} {2 }} \int_0^{2 \pi} h_{ \Sigma}^{2}(\dot{z}(t))dt  = \min {\widetilde { \cal L }}(\Sigma), 
\end{equation}
where  $ {\widetilde {\cal L}}(\Sigma)$ is the generalized action spectrum of $\Sigma$ defined by:
\begin{equation*}  {\widetilde {\cal L}}(\Sigma) = \left \{ \, | \, {A}({\gamma}) \,  | \, ;
\, \gamma \ {\rm is \ a \ generalized \ closed \ characteristic \ on} \ \partial \Sigma
\right \}.\end{equation*}
\end{proposition}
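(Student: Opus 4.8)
The first equality in $(\ref{ANS1})$, namely $\widetilde c_{_{\it EHZ}}(\Sigma)=\min_{z\in\mathcal E}\frac{\pi}{2}\int_0^{2\pi}h_\Sigma^2(\dot z)\,dt$, is precisely Proposition~\ref{proposition-EHZ-capacity-non-smooth}, whose proof is given in the Appendix; so the task is to show that this variational minimum, which we denote by $c$, equals $\min\widetilde{\mathcal L}(\Sigma)$. Since the middle quantity equals $\widetilde c_{_{\it EHZ}}(\Sigma)$ it is invariant under translations of $\Sigma$, and $\min\widetilde{\mathcal L}(\Sigma)$ is manifestly translation invariant (actions of loops are, and the generalized closed characteristics of $\Sigma+v$ are the translates of those of $\Sigma$), so we may assume $0\in {\rm int}(\Sigma)$. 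Then $h_\Sigma=g_{\Sigma^\circ}$ is a genuine gauge, the Fenchel conjugate of $H:=\frac12 g_\Sigma^2$ is $H^*=\frac12 h_\Sigma^2$, the maps $\partial H$ and $\partial H^*$ are mutually inverse, and for $y\in\partial H(x)$ Euler's identity gives $\langle y,x\rangle = 2H(x)$ while $y\in N_\Sigma(x/g_\Sigma(x))$; we also use that $\partial g_\Sigma(x)\subseteq N_\Sigma(x)$ for $x\in\partial\Sigma$. Throughout, ``generalized closed characteristic'' means a non-constant one.

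For the inequality $c\le\min\widetilde{\mathcal L}(\Sigma)$, I would turn an arbitrary generalized closed characteristic $\gamma\colon [0,T]\to\partial\Sigma$ into a competitor for the variational problem. From $\dot\gamma^{\pm}\in JN_\Sigma(\gamma)$ one gets $-J\dot\gamma^{\pm}\in N_\Sigma(\gamma)$, hence $\phi(t):=h_\Sigma(-J\dot\gamma(t))=\langle -J\dot\gamma(t),\gamma(t)\rangle\ge 0$ and $A(\gamma)=\frac12\int_0^T\phi\,dt>0$. Reparametrize $\gamma$ onto $[0,2\pi]$ by $\sigma$, and set $w(s):=-aJ\gamma(\sigma(s))+b$ with $a:=A(\gamma)^{-1/2}$ and $b$ the vector making $\int_0^{2\pi}w=0$. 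Because $A$ is invariant under the symplectomorphism $-J$, under reparametrization and under translation, one has $\frac12\int_0^{2\pi}\langle Jw,\dot w\rangle\,ds=a^2A(\gamma)=1$, so $w\in\mathcal E$; and a change of variables gives $\frac{\pi}{2}\int_0^{2\pi}h_\Sigma^2(\dot w)\,ds=\frac{a^2\pi}{2}\int_0^{2\pi}\dot\sigma(s)^2\,\phi(\sigma(s))^2\,ds$. Choosing $\sigma$ so that $\dot\sigma\cdot(\phi\circ\sigma)$ is constant makes this equal, by Cauchy--Schwarz, to $\frac{a^2\pi}{2}\cdot\frac{1}{2\pi}\big(\int_0^T\phi\,dt\big)^2=a^2A(\gamma)^2=A(\gamma)$. (The intervals on which $\phi$ vanishes are intervals on which $\gamma$ is stationary and may be discarded beforehand.) Thus $c\le |A(\gamma)|$ for every such $\gamma$, hence $c\le\min\widetilde{\mathcal L}(\Sigma)$.

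For the reverse inequality I would exploit the Euler--Lagrange equation of a minimizer. By Proposition~\ref{proposition-EHZ-capacity-non-smooth} the value $c$ is attained at some $z^*\in\mathcal E$. The functional $\Phi(z)=\frac{\pi}{2}\int h_\Sigma^2(\dot z)$ is convex and the constraint $\frac12\int\langle Jz,\dot z\rangle=1$ has non-vanishing derivative at $z^*$, so the non-smooth Lagrange multiplier rule produces $\mu\in\mathbb R$ and a periodic $p\in W^{1,2}(S^1,\mathbb R^{2n})$ with $p(t)\in\partial\big(\frac{\pi}{2}h_\Sigma^2\big)(\dot z^*(t))$ a.e.\ and $\dot p=\mu J\dot z^*$ (the multiplier of the linear constraint $\int z=0$ drops out, and periodicity of $p$ forces the integration constant to be a pure translation), so that $p(t)=\mu Jz^*(t)+\mathbf d$; moreover $\mu\ne 0$, since otherwise $z^*$ would be affine, contradicting $z^*\in\mathcal E$. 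Put $q:=\frac1\pi p=\frac1\pi(\mu Jz^*+\mathbf d)$. The relation $p\in\partial\big(\frac{\pi}{2}h_\Sigma^2\big)(\dot z^*)$ is equivalent, by Fenchel duality, to $\dot z^*\in\partial\big(\frac{1}{2}g_\Sigma^2\big)(q)$; hence $\frac{d}{dt}\big(\frac{1}{2}g_\Sigma^2(q)\big)=\langle\dot z^*,\dot q\rangle=\frac{\mu}{\pi}\langle\dot z^*,J\dot z^*\rangle=0$, so $g_\Sigma(q)\equiv\lambda$ for a constant $\lambda>0$, and $\gamma:=\lambda^{-1}q$ satisfies $\dot\gamma=\frac{\mu}{\pi}J\,v$ with $v\in\partial g_\Sigma(\gamma)\subseteq N_\Sigma(\gamma)$; since $\mu=c>0$ this exhibits $\gamma$ as (a reparametrization of) a generalized closed characteristic on $\partial\Sigma$. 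Finally, using $\langle q,\dot z^*\rangle\equiv\lambda^2$ and $h_\Sigma^2(\dot z^*)\equiv\lambda^2$ (both by $2$-homogeneity), together with $\int\dot z^*=0$ and the normalization $\frac12\int\langle Jz^*,\dot z^*\rangle=1$, a short computation gives $c=\pi^2\lambda^2$, then $\mu=c$, and then $A(\gamma)=\mu=c$. Hence $c\in\widetilde{\mathcal L}(\Sigma)$, so $c\ge\min\widetilde{\mathcal L}(\Sigma)$; combined with the previous paragraph this proves $(\ref{ANS1})$.

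The main obstacle is the third paragraph: one must set up the subdifferential calculus for the integral functional $z\mapsto\int h_\Sigma^2(\dot z)$ and justify the multiplier rule in this non-smooth situation, check the non-degeneracy $\mu\ne0$, and --- most delicately --- upgrade the a~priori $W^{1,2}$-regularity of $z^*$, and hence of $\gamma$, to the piecewise smoothness demanded by the definition of a generalized closed characteristic (so that the left and right derivatives lie in the appropriate cones everywhere). For smooth strictly convex $\Sigma$ this is classical, but in the present generality it rests on the structure of the inclusion $\dot\gamma\in JN_\Sigma(\gamma)$ along the faces of $\partial\Sigma$; this is essentially the content of~\cite{Ku,Ku1}, and since we were unable to consult~\cite{Ku}, the details are carried out independently in the Appendix.
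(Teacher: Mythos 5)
Your proposal is correct and takes essentially the same route as the paper's Appendix: attainment of the variational minimum (Proposition~\ref{proposition-EHZ-capacity-non-smooth}), a Clarke-type non-smooth Lagrange multiplier rule yielding the weak Euler--Lagrange inclusion $\mu Jz^*+\alpha\in\partial h_\Sigma^2(\dot z^*)$ at the minimizer (Lemma~\ref{Lemma-about-weak-critical}), and the Legendre--Fenchel duality identifying such loops with generalized closed characteristics of action $\frac{\pi}{2}I_\Sigma(z)$ (Lemma~\ref{lemma-about-correspond-crit-pt}). The only difference is organizational --- you prove the two inequalities separately, handling the direction $c\le\min\widetilde{\mathcal L}(\Sigma)$ by a direct constant-speed reparametrization of an arbitrary characteristic rather than through the paper's bijection ${\cal F}$ --- and, like the paper, you defer the same genuinely delicate points (the measurable-selection step in the multiplier rule and the regularity of the resulting characteristic) to the detailed appendix-level arguments.
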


The proofs  of Propositions~\ref{proposition-EHZ-capacity-non-smooth} and~\ref{prop-about-the-1-1-corrospendence} go
along the same lines as the analogous proofs in the smooth case.  
For completeness,ת we include the details in the Appendix of this paper.
Finally, we  conclude this subsection with the following Brunn-Minkowski type
 inequality that was proved in~\cite{AAO1} for smooth convex bodies.
\begin{theorem} \label{BM-non-smooth-case} For any $\Sigma_1, \Sigma_2 \in \widetilde {\cal K}^{2n}$, one has
\begin{equation} \label{eq-BM-for-HEZ} \widetilde c_{_{EHZ}}(\Sigma_1 + \Sigma_2)^{\frac 1 2}
\geq \widetilde  c_{_{EHZ}}(\Sigma_1)^{\frac 1 2} +
\widetilde c_{_{EHZ}}(\Sigma_2)^{\frac 1 2}. \end{equation}
Moreover, equality holds if and only if $\Sigma_1$ and $\Sigma_2$ have a pair of
capacity carriers which coincide up to translation and dilation.
\end{theorem}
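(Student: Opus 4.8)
The plan is to transport the proof of the smooth case from~\cite{AAO1} to the (possibly singular) convex setting, using the variational formula of Proposition~\ref{proposition-EHZ-capacity-non-smooth} together with the additivity $h_{\Sigma_1+\Sigma_2}=h_{\Sigma_1}+h_{\Sigma_2}$ of support functions under Minkowski summation. Since $\widetilde c_{_{EHZ}}$ is translation invariant we may assume $0\in\mathrm{int}(\Sigma_i)$, so that $h_{\Sigma_i}\ge 0$ with $h_{\Sigma_i}(u)=0$ only for $u=0$. The heart of the matter is to rewrite $\sqrt{\widetilde c_{_{EHZ}}}$ as the infimum of a functional which, \emph{for each fixed loop}, depends \emph{linearly} on the support function; the Brunn--Minkowski inequality then drops out of the trivial fact that $\inf(f+g)\ge\inf f+\inf g$.

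First I would homogenize the formula of Proposition~\ref{proposition-EHZ-capacity-non-smooth}: since $A(\lambda w)=\lambda^2A(w)$ and translating a loop changes neither $\dot w$ nor $A(w)$, rescaling an arbitrary loop $w$ with $A(w)>0$ into ${\cal E}$ gives
\[
\widetilde c_{_{EHZ}}(\Sigma)=\frac{\pi}{2}\,\inf\Bigl\{\tfrac{1}{A(w)}\!\int_0^{2\pi}h_\Sigma^2(\dot w(t))\,dt\ :\ w\in W^{1,2}(S^1,{\mathbb R}^{2n}),\ A(w)>0\Bigr\}.
\]
Next, both $A(w)$ and $\int_0^{2\pi}h_\Sigma(\dot w(t))\,dt$ are invariant under orientation-preserving reparametrizations of $S^1$ (using $1$-homogeneity of $h_\Sigma$), so (modulo a routine truncation/reparametrization argument) one may pass to the parametrization in which $h_\Sigma(\dot w)$ is constant, where the Cauchy--Schwarz bound $\bigl(\int_0^{2\pi}h_\Sigma(\dot w)\bigr)^2\le 2\pi\int_0^{2\pi}h_\Sigma^2(\dot w)$ becomes an equality; comparing the resulting infima in both directions yields
\[
\sqrt{\widetilde c_{_{EHZ}}(\Sigma)}=\frac12\,\inf_{\,A(w)>0}\frac{\int_0^{2\pi}h_\Sigma(\dot w(t))\,dt}{\sqrt{A(w)}}\,,
\]
an infimum that is attained since the minimum in Proposition~\ref{proposition-EHZ-capacity-non-smooth} is. (Sanity check: for $\Sigma=B^{2n}$ the obvious circle achieves $\sqrt\pi=\sqrt{\widetilde c_{_{EHZ}}(B^{2n})}$.) With this formula, inequality~(\ref{eq-BM-for-HEZ}) is immediate: for any loop $w$ with $A(w)>0$,
\[
\frac{\int_0^{2\pi}h_{\Sigma_1+\Sigma_2}(\dot w)\,dt}{\sqrt{A(w)}}=\frac{\int_0^{2\pi}h_{\Sigma_1}(\dot w)\,dt}{\sqrt{A(w)}}+\frac{\int_0^{2\pi}h_{\Sigma_2}(\dot w)\,dt}{\sqrt{A(w)}}\ge 2\sqrt{\widetilde c_{_{EHZ}}(\Sigma_1)}+2\sqrt{\widetilde c_{_{EHZ}}(\Sigma_2)},
\]
and taking the infimum over $w$ finishes it. (The inequality alone could also be obtained from the smooth case~\cite{AAO1} by approximating $\Sigma_1,\Sigma_2$ from outside in the Hausdorff metric by bodies in ${\cal K}^{2n}$ and invoking the continuity in Lemma~\ref{tech-lemma-about-extension} and of Minkowski addition.)

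It remains to treat the equality case. Equality in~(\ref{eq-BM-for-HEZ}) holds if and only if some single loop $w_0$ minimizes both functionals $w\mapsto\int_0^{2\pi}h_{\Sigma_i}(\dot w)\,dt/\sqrt{A(w)}$, $i=1,2$: for the ``only if'' direction, a minimizer $w_0$ for $\Sigma_1+\Sigma_2$ exists and satisfies $\sum_i\int_0^{2\pi}h_{\Sigma_i}(\dot w_0)\,dt/\sqrt{A(w_0)}=2\sum_i\sqrt{\widetilde c_{_{EHZ}}(\Sigma_i)}$ with each summand bounded below by the corresponding term on the right, which forces summand-by-summand equality; the ``if'' direction is clear. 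One then identifies, as in the proof of Proposition~\ref{prop-about-the-1-1-corrospendence}, the minimizers of $w\mapsto\int_0^{2\pi}h_\Sigma(\dot w)\,dt/\sqrt{A(w)}$ with the capacity carriers of $\Sigma$: rescaling $w_0$ into ${\cal E}$ and reparametrizing so that $h_{\Sigma_1}(\dot w_0)$ (respectively $h_{\Sigma_2}(\dot w_0)$) is constant produces an ${\cal E}$-minimizer for $\Sigma_1$ (resp. $\Sigma_2$) whose image is a capacity carrier $\gamma_1\subset\partial\Sigma_1$ (resp. $\gamma_2\subset\partial\Sigma_2$); since $\gamma_1$ and $\gamma_2$ are images of reparametrizations, dilations, and translations of the one loop $w_0$, they coincide up to translation and dilation, which is exactly the asserted condition. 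I expect this last identification to be the main obstacle --- carefully bookkeeping how the reparametrizations, dilations, and translations relate the variational minimizers to the geometric (generalized) closed characteristics, which leans on the proof of Proposition~\ref{prop-about-the-1-1-corrospendence} --- whereas the remaining points (reducing to $0\in\mathrm{int}(\Sigma_i)$, the truncation needed before reparametrizing, and attainment of the infima) are routine.
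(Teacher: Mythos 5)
Your proposal is correct and is essentially the paper's own argument: the paper handles the equality case by rerunning the machinery of~\cite{AAO1} with exponent $p=1$ in the integrand, i.e.\ exactly the linear-in-$h_\Sigma$ representation $\sqrt{\widetilde c_{_{EHZ}}(\Sigma)}=\tfrac12\inf\bigl\{\int_0^{2\pi}h_\Sigma(\dot w)\,dt/\sqrt{A(w)}\bigr\}$ that you derive, and then reads off both the inequality and the equality characterization from common minimizers and their correspondence with capacity carriers. The only cosmetic difference is that for the inequality alone the paper invokes Hausdorff-continuity of $\widetilde c_{_{EHZ}}$ to pass from the smooth case of~\cite{AAO1}, a route you also mention parenthetically.
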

\begin{proof}[{\bf Proof of Theorem~\ref{BM-non-smooth-case}}]
The statement of the theorem  was proved in~\cite{AAO1}  for convex bodies
with smooth boundary. To show \eqref{eq-BM-for-HEZ}, it is not hard to check that $ \widetilde c_{_{EHZ}}$
is continuous with respect to the Hausdorff metric on
$\widetilde {\mathcal K}^{2n}$ (see~\cite{MS}, Exercise 12.7), which immediately implies the inequality  for any  two convex bodies
 $\Sigma_1, \Sigma_2 \in \widetilde {\cal K}^{2n}$. For the characterization of the equality case, one must follow the proof of Proposition \ref{proposition-EHZ-capacity-non-smooth}  for smooth convex bodies from~\cite{AAO1} with a power $p=1$ in the integrand instead of $p=2$, which 
carries over verbatim, including the equality case, to the class $\widetilde {\cal K}^{2n}$.
\end{proof}

\subsection{Minkowski billiards} \label{Mink-Bill-sec}

The general study of billiard dynamics in Finsler and Minkowski
geometries was initiated in~\cite{GT} (see also~\cite{T}). We remark
that from the point of view of geometric optics, Minkowski billiard
trajectories describe the propagation of waves in a homogeneous,
anisotropic medium that contains perfectly reflecting mirrors (see~\cite{GT}).
Below we focus on the special case of Minkowski billiards in a smooth bounded convex
body $K \subset {\mathbb R}^n$. Roughly speaking, we equip $K$
with a Finsler metric given by a certain norm $\| \cdot \|$, and
consider billiards in $K$ with respect to the geometry induced by $\|
\cdot \|$.

More precisely, let $K \subset {\mathbb R}^n_q$, and $T \subset {\mathbb R}^n_p$ be two convex bodies with smooth boundary, and consider the 
unit cotangent bundle
\begin{equation*}
U_T^*K := K \times T = \{ (q,p) \, | \, q \in K, \ {\rm and} \
g_T(p)  \leq 1 \} \subset T^* {\mathbb R}^n_q  = {\mathbb R}^{n}_q
\times  {\mathbb R}^{n}_p \end{equation*} 
Note that when $T$ is centrally symmetric i.e., $T=-T$, one has $g_T(x) = \|x\|_T$.
We remark that although $K$ and $T$ are smooth convex bodies, their product
 $K\times T $ is a smooth manifold with
corners.

Motivated by the classical correspondence between closed
geodesics in a Riemannian manifold  and closed
characteristics of its unit cotangent bundle, we  now relate the generalized closed
characteristics on $K \times T$ with
certain billiard trajectories, which we call $(K,T)$-billiard trajectories.
These are closed billiard trajectories in $K$ when the bouncing rule
is determined by the geometry induced from the body $T$.

As was explained in the previous subsection, after a standard re-parametrization
argument any closed characteristic $\Gamma$ on a smooth convex hypersurface
$\partial \Sigma \subset {\mathbb R}^{2n}$ is the image of a loop
$\gamma : [0,2\pi] \rightarrow {\mathbb R^{2n}}$  where $\dot \gamma =
d J \nabla g_{\Sigma}(\gamma)$, for some constant $d$.  For $K \times T$ one has
$g_{K \times T}(q,p) = \max \{ g_K(q),g_T(p) \}$. This leads naturally to the following:

\begin{definition} \label{def-of-periodic-traj}
A closed $(K,T)$-billiard trajectory is the image of a piecewise smooth
map $\gamma \colon S^1 \rightarrow \partial (K \times T) $
such that for every  $t \notin {\mathcal B}_{\gamma}:= \{ t
\in S^1 \, | \, \gamma(t) \in \partial K \times \partial T \}$ one has:
\begin{equation*}
\dot \gamma(t) = d \, {\mathfrak X}(\gamma(t)),  \end{equation*} for some positive
 constant $d$, and a vector field ${\mathfrak X}$  given by:
\begin{equation*}
{\mathfrak X}(q,p) = \left\{
\begin{array}{ll}
(-\nabla g_T(p) ,0), &  (q,p) \in int(K) \times \partial T,\\
(0,\nabla g_K(q)), & (q,p) \in \partial K \times int(T).
\end{array} \right.
\end{equation*}
Moreover, for any $t \in {\mathcal B}_{\gamma}$, the left and right
derivatives of $\gamma(t)$ exists, and
\begin{equation} \label{eq-the-cone}
\dot \gamma^{\pm}(t) \in \{   \alpha (-\nabla g_T(p) ,0) + \beta
(0,\nabla g_K(q))    \ | \ \alpha,\beta \geq 0, \ {\rm and} \ (\alpha, \beta) \neq (0,0) \}.
\end{equation}
\end{definition}

\begin{remark}
{\rm Although  in Definition~\ref{def-of-periodic-traj} there is a natural symmetry between the  bodies $K$ and $T$, 
in what follows the body $K$ shall play the role of the billiard table, while  $T$ induces the geometry that governs the billiard dynamics in $K$.
It will be useful to introduce the following notation: Let $\pi_q \colon {\mathbb R}^{2n} \rightarrow {\mathbb R}^n_q$ denote the projection to the configuration space. For every 
$(K,T)$-billiard trajectory $\gamma$, the curve $\pi_q(\gamma)$
shall be called a $T$-billiard trajectory in $K$.
} \end{remark}

\begin{definition}[{\bf Trajectories classification}]
A closed $(K,T)$-billiard trajectory $\gamma$ is said to be {\it proper}
if the set ${\mathcal B}_{\gamma}$ is finite, i.e., $\gamma$ is a  a
broken bicharacteristic that enters, and instantly exits, the boundary
$\partial K \times \partial T$ at the reflection points.
In the case where ${\mathcal B}_{\gamma} = S^1$, i.e., $\gamma$ is travelling
solely along the boundary $\partial K \times \partial T$,
 we say that $\gamma$ is a gliding trajectory.
\end{definition}

\begin{figure} 
\begin{center}
\begin{tikzpicture}[scale=1]

 \draw[important line][rotate=30] (0,0) ellipse (75pt and 40pt);

 \path coordinate (w1) at (2.1,4*0.75) coordinate (q0) at
 (-4.5*0.5,-2*0.2) coordinate (q1) at (1.6,4*0.44) coordinate (q2) at
 (1.74,+0.46) coordinate (w2) at (2.6,-1.1) coordinate (w3) at (-3.2,-0.11) coordinate (K) at
 (0,0.15);

\draw[red] [important line] (q0) -- (q1); \draw[->] (q1) -- (w1);
\draw[red] [important line] (q1) -- (q2); 
\draw[->] (q0) -- (w3);

\filldraw [black]
  (w3) circle (0pt) node[above] {{\footnotesize $w_2=\nabla \|q_2\|_K$}}
    (w1) circle (0pt) node[right] {{\footnotesize $w_1=\nabla \|q_1\|_K$}}
    (w2) circle (0pt) 
     (q0) circle (2pt) node[below right] {{\footnotesize $q_2$}}
      (q1) circle (2pt) node[above right=0.5pt] {{\footnotesize $q_1$}}
       (q2) circle (2pt) node[right=0.5pt] {{\footnotesize $q_0$}}
        (K) circle (0pt) node[right=0.5pt] {$K$};

       \begin{scope}[xshift=7cm]

 \draw[important line][rounded corners=10pt][rotate=10] (1.8,0) --
 (0.8,1.8)-- (-0.8,1.8)--  (-1.8,0)--  (-0.8,-1.8) -- (0.8,-1.8) --
 cycle;

 \path coordinate (p1) at (0.5,-4*0.433) coordinate (np0) at
 (2.3,2*0.9) coordinate (p0) at (1.2,2*0.53) coordinate (np1) at
 (0.7,-3) coordinate (p2) at (-1.2,2*0.66) coordinate (D) at
 (0.3,0.22);

 \draw[<-] (np0) node[right] {{\footnotesize $v_1=\nabla \|p_1
 \|_T$}} -- (p0);
 \draw[<-] (np1) node[right] {{\footnotesize $v_0=\nabla \|p_0
 \|_T$}} -- (p1);

 \draw[blue][important line] (p0) -- (p1);
 \draw[blue][important  line] (p0) -- (p2);

  \filldraw [black]
       (p1) circle (2pt) node[below right] {{\footnotesize $p_0$}}
         (p2) circle (2pt) node[left] {{\footnotesize $p_2$}}
         (p0) circle (2pt) node[above=2pt] {{\footnotesize $p_1$}}
          (D) circle (0pt) node[left] {$T$};
 \end{scope}
 \end{tikzpicture}

 \caption{A proper $(K,T)$-Billiard trajectory.} 
 \end{center}
 \end{figure}
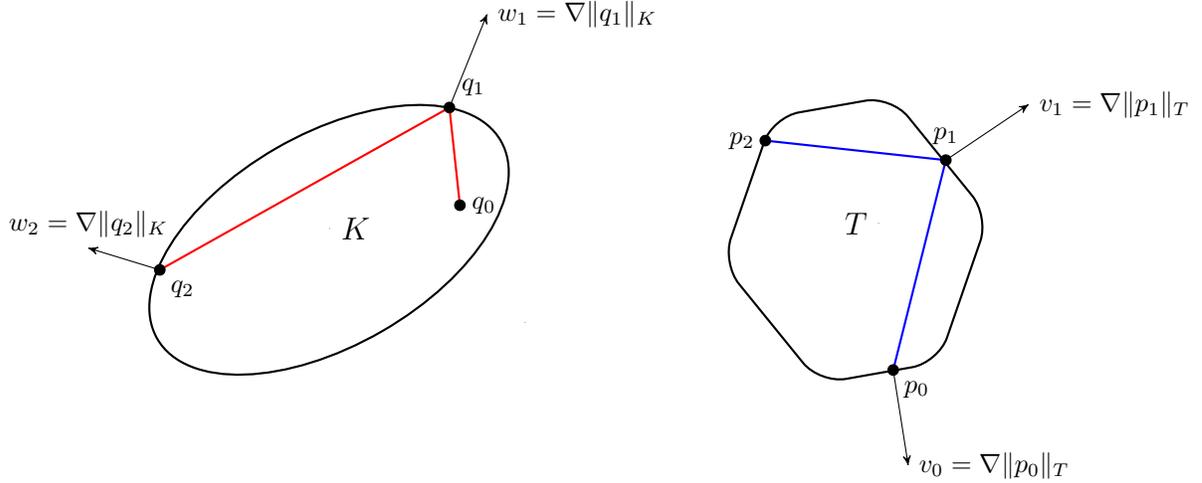

For a proper billiard trajectory, when we follow the flow of the vector field
${\mathfrak X}$, we move in $K \times \partial T$ from $(q_0,p_0)$ to
$(q_1,p_0)$ following the opposite of the outer normal to
$\partial T$ at $p_0$. When we hit the boundary $\partial K$ at the
point $q_1$, the vector field is changing, and we start to move in
$\partial K \times T$ from $(q_1,p_0)$ to $(q_1,p_1)$ following the outer
 normal to $\partial K$ at the point $q_1$.  Next, we move from
$(q_1,p_1)$ to $(q_2,p_1)$ following the opposite of the normal to
$\partial T$ at $p_1$, and so forth  (see
Figure $1$). Note that the reflection law described above is a
natural variation of the classical one (i.e., equal impact and
reflection angles) when the Euclidean structure on ${\mathbb R}^n_q$
is replaced by the metric induced by the norm $\| \cdot \|_{T}$.
Moreover, it is not hard to check that when $T=B$ is the Euclidean
unit ball, the billiard bouncing rule described above is the
standard one.

Note moreover that the action of a $(K,T)$-billiard trajectory $\gamma$ between  two
consecutive bouncing points, say $(q_0,p_0)$ at $t=0$ and $(q_{1},p_0)$ at
$t=\tau_0$, is given by
\begin{equation*}
A_{(q_0,p_0) \rightarrow (q_1,p_0)  }(\gamma) = \int_0^{\tau_0} p(t) \dot q(t) dt = p_0 (q_1-q_0),
\end{equation*}
where $g_T(p_0)=1$, and $q_1-q_0 =d\tau_0 \nabla g_T(p_0)$. Note that this is
also the maximum of the function $p \rightarrow p(q_1-q_0)$ on $g_T^{-1}(1)$,
which by definition equals  $h_T(q_1 -q_0)$. When moving on $\partial K \times T$,
the function $t \rightarrow q(t)$ is constant, and thus the action is zero.
Hence we conclude that the action of a proper $(K,T)$-billiard trajectory 
$\gamma$ with $m$ bouncing points is given by
\begin{equation} \label{action-is-length} A(\gamma) = \sum_{j=1}^m h_T(q_{j+1}-q_{j}),
\end{equation} \label{eq-action-as-length}
where $q_i= \pi_q(\gamma(t_i))$ are the projections to ${\mathbb R}^n_q$ of the
bouncing points $\{ \gamma(t_i)  \ | \ t_i \in {\cal B}_{\gamma} \}$, and $q_{m+1} = q_1$.

The next proposition shows that   
every $(K,T)$-billiard trajectory is either a proper or a gliding trajectory.
This result seems to be  known in the Euclidean case (see e.g.~\cite{GM}). Set
 \begin{equation*} {\cal A} = \{ (q,p) \in \partial K \times \partial T \ |
 \  \nabla g_T(p) \perp \nabla g_K(q) \}. \end{equation*}
It is not hard to check that ${\cal A} \subset \partial K \times \partial T$ is
smooth, and $dim({\cal A})=2n-3$.

\begin{proposition} \label{dichotomy-prop}
 Let $K \subset {\mathbb R}^n_q$ and $T \subset {\mathbb R}^n_p$ be
two smooth strictly convex bodies. 
Then,  every closed $(K,T)$-billiard trajectory is either  a proper trajectory , or a gliding trajectory.
Moreover,  if $\gamma$ is a $(K,T)$-gliding trajectory, then for every $t \in S^1$
one has $\gamma(t) \in {\cal A}$, and
\begin{equation*} \dot \gamma(t) =   (\dot \gamma_q(t), \dot \gamma_p(t))=
 (- \alpha(t) \, \nabla g_{T}(\gamma_p(t)), \beta(t)  \nabla g_{K}(\gamma_q(t))), \end{equation*}
where $\alpha $ and $ \beta $ are two smooth  {\em positive}   functions which satisfy 
\begin{equation}  \label{condition-on-alpha-and-beta}
{\frac {\alpha(t) }   {\beta(t)  }   }  = {\frac  {  \langle \nabla^2 g_{T} (\gamma_p(t)) \nabla g_{K}(\gamma_q(t)), \nabla g_{K}(\gamma_q(t)) \rangle       }    {    \langle \nabla^2 g_{K} (\gamma_q(t)) \nabla g_{T}(\gamma_p(t)), \nabla g_{T}(\gamma_p(t)) \rangle    }   }.
\end{equation}

\end{proposition}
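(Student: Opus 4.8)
The plan is to analyse $\gamma$ through the closed set $\mathcal{B}_\gamma=\{t\in S^1:\gamma(t)\in\partial K\times\partial T\}$ of bouncing times. On each connected component $I$ of $S^1\setminus\mathcal{B}_\gamma$ the curve $\gamma$ lies in exactly one of the two relatively open strata $\mathrm{int}(K)\times\partial T$ and $\partial K\times\mathrm{int}(T)$: it cannot pass from one to the other inside $I$, since the transition locus is precisely $\partial K\times\partial T=\mathcal{B}_\gamma$. Hence, by the form of the vector field $\mathfrak{X}$, on $I$ the trajectory is either a \emph{free flight}, with $\gamma_p$ constant and $\gamma_q$ an affine segment of direction $-\nabla g_T(\gamma_p)$, or a \emph{reflection piece}, with $\gamma_q$ constant and $\gamma_p$ affine of direction $\nabla g_K(\gamma_q)$. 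Moreover $N_{K\times T}(q,p)=\mathbb{R}_+\nabla g_K(q)\times\mathbb{R}_+\nabla g_T(p)$, so at a corner point $JN_{K\times T}(q,p)$ is exactly the two-dimensional cone appearing in~\eqref{eq-the-cone}, which controls the one-sided velocities of $\gamma$ at bouncing times.

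The first key input is a transversality statement away from $\mathcal{A}$. Suppose $\gamma(t_0)=(q_0,p_0)\in\partial K\times\partial T$ with $(q_0,p_0)\notin\mathcal{A}$, so that $c:=\langle\nabla g_K(q_0),\nabla g_T(p_0)\rangle\neq0$. Writing $\dot\gamma^{\pm}(t_0)=\alpha^{\pm}(-\nabla g_T(p_0),0)+\beta^{\pm}(0,\nabla g_K(q_0))$ with $\alpha^{\pm},\beta^{\pm}\ge0$ not both zero, as in~\eqref{eq-the-cone}, a short computation with the constraints $g_K(\gamma_q)\le1$ and $g_T(\gamma_p)\le1$ (which hold since $\gamma$ stays in $K\times T$) shows that, as $c\neq0$, one of $\alpha^{\pm},\beta^{\pm}$ must vanish, so that $\dot\gamma^{\pm}(t_0)$ is a free-flight velocity or a reflection velocity transverse to the corner. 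Hence $\gamma$ consists of an honest flight or reflection piece on each side of $t_0$; in particular $t_0$ is isolated in $\mathcal{B}_\gamma$, and every accumulation point of $\mathcal{B}_\gamma$ must map into $\mathcal{A}$.

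The second key input concerns $\mathcal{A}$ itself. A flight piece is a straight chord of $K$ and a reflection piece a straight chord of $T$; at a point $(q_0,p_0)\in\mathcal{A}$ the flight direction $\pm\nabla g_T(p_0)$ lies in the tangent hyperplane $(\nabla g_K(q_0))^{\perp}$ of $\partial K$ at $q_0$, which is the defining relation of $\mathcal{A}$; but a chord of a strictly convex body cannot be tangent to the boundary at one of its endpoints, since a supporting hyperplane meets a strictly convex body in a single point. This chord rigidity is the key to showing that a whole neighbourhood of any $t_0$ with $\gamma(t_0)\in\mathcal{A}$ lies in $\mathcal{B}_\gamma$, i.e.\ that $G:=\{t\in S^1:\gamma(t)\in\mathcal{A}\}$ is open. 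Granting this, differentiating $g_K(\gamma_q)\equiv1$, $g_T(\gamma_p)\equiv1$ and $\langle\nabla g_K(\gamma_q),\nabla g_T(\gamma_p)\rangle\equiv0$ on such a neighbourhood confines $\gamma$ to $\mathcal{A}$ and produces~\eqref{condition-on-alpha-and-beta}. The two quadratic forms there are strictly positive: by Euler's identity $\langle\nabla g_K(q),q\rangle=g_K(q)=1$ and $\nabla^2g_K(q)q=0$, while strict convexity makes $\nabla^2g_K(q)$ positive definite transverse to $\mathbb{R}q$; and on $\mathcal{A}$ one cannot have $\nabla g_T(p)\parallel q$, for otherwise $0=\langle\nabla g_K(q),\nabla g_T(p)\rangle$ would be a nonzero multiple of $g_K(q)$. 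Hence $\langle\nabla^2g_K(q)\nabla g_T(p),\nabla g_T(p)\rangle>0$, and symmetrically $\langle\nabla^2g_T(p)\nabla g_K(q),\nabla g_K(q)\rangle>0$, so $\mathcal{A}$ carries a smooth, nowhere-vanishing vector field tangent to $\mathcal{A}$ whose integral curves are exactly those in~\eqref{condition-on-alpha-and-beta}; thus any trajectory contained in $\mathcal{A}$ is a gliding trajectory of the asserted form, which gives the ``moreover'' part.

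Finally I would assemble the dichotomy. The set $G$ is closed, since $\mathcal{A}$ is closed and $\gamma$ continuous, and open by the previous paragraph; as $S^1$ is connected, $G=\emptyset$ or $G=S^1$. If $G=S^1$ then $\gamma$ lies in $\mathcal{A}$ and is a gliding trajectory as described. If $G=\emptyset$ then every bouncing time maps outside $\mathcal{A}$, hence is isolated in $\mathcal{B}_\gamma$ by the first input; but an accumulation point of $\mathcal{B}_\gamma$ would have to map into $\mathcal{A}$, contradicting $G=\emptyset$, so $\mathcal{B}_\gamma$ has no accumulation point and is finite, i.e.\ $\gamma$ is proper. (Conversely $\mathcal{B}_\gamma=S^1$ forces $G=S^1$, since at a point of differentiability $\dot\gamma$ lies in the cone~\eqref{eq-the-cone} and is tangent to $\partial K\times\partial T$, which by the first input is impossible off $\mathcal{A}$ unless $\dot\gamma=0$; so ``gliding'' and ``$G=S^1$'' coincide.) The step I expect to be the real obstacle is the openness of $G$ --- equivalently, ruling out a cascade of ever-shorter flight and reflection pieces converging onto a point of $\mathcal{A}$. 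The chord-tangency remark immediately excludes any single piece with an \emph{endpoint} in $\mathcal{A}$; to exclude an infinite such cascade one needs a finer local analysis near $\mathcal{A}$: after normalizing the parametrization of $\gamma$, one estimates how the incidence defect $\langle\nabla g_K(\gamma_q),\nabla g_T(\gamma_p)\rangle$ and the chord lengths change from one piece to the next using strict convexity, and shows that such accumulation is incompatible with $\gamma$ being a piecewise-smooth loop.
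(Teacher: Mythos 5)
Most of your outline matches the paper's proof: your first key input is the paper's Step I (at a corner point off ${\cal A}$ one of the cone coefficients must vanish, so the trajectory instantly leaves $\partial K\times\partial T$), your chord-tangency remark is the paper's Step II (a flight or reflection piece cannot end at a point of ${\cal A}$, by strict convexity), and your treatment of the ``moreover'' part is essentially the paper's Step IV, with a clean positivity argument for the two quadratic forms. But there is a genuine gap exactly where you yourself flag ``the real obstacle'': the openness of $G=\{t:\gamma(t)\in{\cal A}\}$, i.e.\ excluding an infinite cascade of ever-shorter flight/reflection pieces whose bounce points accumulate at a point of ${\cal A}$. You only gesture at ``a finer local analysis near ${\cal A}$'' using strict convexity, and this cannot be dismissed so lightly: the phenomenon you need to rule out is precisely a billiard trajectory ``terminating on the boundary'', which Halpern \cite{Halpern} showed can genuinely occur for convex tables of insufficient regularity. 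So no soft argument from strict convexity alone will close this step; one needs a quantitative input tied to higher smoothness and positive curvature.

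The paper's Step III supplies exactly this missing ingredient. Assuming $\gamma$ meets ${\cal A}$ but is not contained in it, it produces a sequence of bounce points $\gamma(t_i)=(q_i,p_i)\in\partial K\times\partial T\setminus{\cal A}$ with $t_i\to t_\tau$ and $\gamma(t_\tau)\in{\cal A}$ (using your/its Steps I--II), then shows $\sum_i|q_{i+1}-q_i|<\infty$ and $\sum_i|p_{i+1}-p_i|<\infty$ by a telescoping argument in coordinates where every component of $\nabla g_T(p)$ at the limit is nonzero (so consecutive increments have constant sign componentwise), and finally invokes Gruber's theorem \cite{Gruber} that for a $C^3$ body with positive Gauss curvature no billiard trajectory has both chord series convergent, adapted to the Minkowski setting. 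Your proposal, as written, proves everything except this accumulation-exclusion step, and that step is the heart of the dichotomy; to complete your argument you would either have to reproduce an estimate of Gruber type near ${\cal A}$ or cite such a result, rather than appeal to strict convexity in the abstract.
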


The proof of Proposition~\ref{dichotomy-prop}  is given in the Appendix of this paper. 
We are now in a position to state the theorem on which the results stated in the introduction rely. 

\begin{theorem} \label{Main-Theorem}
Let $K \subset {\mathbb R}^n_q$ and $T \subset {\mathbb R}^n_p$ be
two smooth strictly convex bodies.  Then
there exists at least one  periodic $(K,T)$-billiard trajectory $\gamma^* $
such that
\begin{equation} \label{eq-c-hat-as-action} \widetilde c_{_{\it EHZ}} (K \times T) =  A(\gamma^*)
 = \min \{A(\gamma) \, ; \, \gamma \ {\rm is \ a } \ (K,T) \ {\rm billiard \ trajectory}  \}.  \end{equation}
\end{theorem}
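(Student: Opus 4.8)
The plan is to identify generalized closed characteristics on $\partial(K\times T)$ with closed $(K,T)$-billiard trajectories, and then invoke Proposition~\ref{prop-about-the-1-1-corrospendence} to conclude that $\widetilde c_{_{\it EHZ}}(K\times T)$ equals the minimal action over this family. First I would recall that, by Proposition~\ref{prop-about-the-1-1-corrospendence} applied to $\Sigma=K\times T\in\widetilde{\mathcal K}^{2n}$, there exists a generalized closed characteristic $\gamma^*$ on $\partial(K\times T)$ with $\widetilde c_{_{\it EHZ}}(K\times T)=A(\gamma^*)=\min\widetilde{\mathcal L}(K\times T)$. So the entire content of the theorem reduces to the claim that the set of (images of) generalized closed characteristics on $\partial(K\times T)$ coincides, up to reparametrization, with the set of closed $(K,T)$-billiard trajectories, and that reparametrization preserves the action $A(\gamma)=\int_\gamma p\,dq$.

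Next I would carry out this identification by unwinding the definitions. A generalized closed characteristic satisfies $\dot\gamma^\pm(t)\in JN_{K\times T}(\gamma(t))$; since the outward normal cone of a product factors as $N_{K\times T}(q,p)=N_K(q)\times N_T(p)$, and since at smooth boundary points $N_K(q)=\mathbb R_+\nabla g_K(q)$, $N_T(p)=\mathbb R_+\nabla g_T(p)$, the characteristic condition says $(\dot\gamma_q,\dot\gamma_p)$ is a nonnegative combination of $J(\nabla g_K(q),0)=(0,\nabla g_K(q))$ and $J(0,\nabla g_T(p))=(-\nabla g_T(p),0)$ — exactly the vector field $\mathfrak X$ and the cone condition~\eqref{eq-the-cone} in Definition~\ref{def-of-periodic-traj}. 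On the open pieces $int(K)\times\partial T$ and $\partial K\times int(T)$ one of the two cones collapses to a ray, forcing $\dot\gamma$ to be a positive multiple of the corresponding generator, and a reparametrization by arc length in the appropriate variable turns this into $\dot\gamma=d\,\mathfrak X(\gamma)$ with constant $d$; on the corner set $\partial K\times\partial T$ both definitions allow the full two-dimensional cone. Thus the two notions describe the same loops up to reparametrization. Finally, the action $A(\gamma)=\int_0^{2\pi}p(t)\dot q(t)\,dt$ is manifestly invariant under orientation-preserving reparametrization, so the minimum of $A$ over generalized closed characteristics equals the minimum over closed $(K,T)$-billiard trajectories, and $\gamma^*$ (suitably reparametrized) is the desired trajectory.

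The main obstacle is the careful handling of the corner locus $\partial K\times\partial T$ and the regularity of the trajectory there: one must verify that the piecewise-smooth loop from Proposition~\ref{prop-about-the-1-1-corrospendence} genuinely has well-defined one-sided derivatives at corner times, that it spends only a measure-zero set of time on $\partial K\times\partial T$ unless it is gliding, and that it can be reparametrized consistently across the finitely many (or, in the gliding case, a continuum of) transition times so that $d$ is locally constant on each proper arc. Here I would lean on Proposition~\ref{dichotomy-prop}, which provides exactly the gliding-versus-proper dichotomy and the structure of gliding trajectories, ensuring that the bouncing set ${\mathcal B}_\gamma$ is either finite or all of $S^1$; this rules out pathological intermediate behaviour and makes the reparametrization argument legitimate. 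A secondary point requiring care is that Proposition~\ref{prop-about-the-1-1-corrospendence} and its reliance on $\partial H(g_{K\times T})$ must be reconciled with the piecewise description via $\mathfrak X$: one uses $g_{K\times T}(q,p)=\max\{g_K(q),g_T(p)\}$ and the standard subdifferential calculus for a max of smooth convex functions, noting $\partial g_{K\times T}(q,p)=\{(0,\nabla g_T(p))\}$ when $g_T(p)>g_K(q)$, $\{(\nabla g_K(q),0)\}$ when $g_K(q)>g_T(p)$, and the convex hull of the two when they are equal — which matches the cone in~\eqref{eq-the-cone} after applying $J$.
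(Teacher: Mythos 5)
Your proposal follows essentially the same route as the paper: the paper's proof of Theorem~\ref{Main-Theorem} simply observes that generalized closed characteristics on $\partial(K\times T)$ are exactly the closed $(K,T)$-billiard trajectories and then invokes Propositions~\ref{proposition-EHZ-capacity-non-smooth} and~\ref{prop-about-the-1-1-corrospendence}, which is precisely your argument. Your extra care in factoring the normal cone $N_{K\times T}=N_K\times N_T$, matching it with the cone in~(\ref{eq-the-cone}), and handling the reparametrization and the corner set via Proposition~\ref{dichotomy-prop} just fills in details the paper leaves implicit, so the proposal is correct.
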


Note that from Theorem~\ref{Main-Theorem} and equation~$(\ref{action-is-length})$ above it follows that $\widetilde c_{_{\it EHZ}} (K \times T)$ is the length, with respect to the
support function $h_T$,  of the shortest $T$-billiard trajectory in $K$. 
We denote this number also by $\xi_T(K) := \widetilde c_{_{\it EHZ}} (K \times T)$. 
In particular, in the case where
 $T=B$ is the Euclidean unit ball, $\xi(K) :=  \widetilde c_{_{\it EHZ}} (K \times B)$ is the length of the shortest periodic (Euclidean) billiard trajectory in $K$. Note that a gliding trajectory is also considered as a billiard in this paper, in contrast to several other settings in the literature on billiards.

\begin{proof}[Proof of Theorem \ref{Main-Theorem}]
Consider the product $K \times T$ of two smooth strictly convex bodies $K \subset {\mathbb R}^n_q$ and $T \subset {\mathbb R}^n_p$.
 Note that  generalized closed characteristics on the boundary
$\partial (K \times T)$ are exactly  $(K,T)$-billiard trajectories and vice versa.
Thus, equation~$(\ref{eq-c-hat-as-action})$ follows  immediately from the combination of
Propositions~\ref{proposition-EHZ-capacity-non-smooth} and~\ref{prop-about-the-1-1-corrospendence}.
\end{proof}

\section{Proof of the main results} \label{Sec-proof-of-main-results}
\subsection{Brunn-Minkowski for billiards}

Let us first prove the following Minkowski-billiard version of the inequality in Theorem \ref{BM-for-Bill}. Let $K_1, K_2 \subseteq {\mathbb R}_q^n$ and   $T \subseteq {\mathbb R}_p^n$ be 
smooth and strictly convex bodies. Then
\begin{equation*} \xi_T(K_1 + K_2) \geq   \xi_T(K_1) + \xi_T(K_2). \end{equation*}
To prove this, let 
$T_1, T_2 \subseteq {\mathbb R}_p^n$ be smooth convex bodies.
From Theorem~\ref{BM-non-smooth-case}, and the fact that $$ (K_1 + K_2) \times (T_1 + T_2) = (K_1 \times T_1) + (K_2 \times T_2) $$
 it follows that:
\begin{equation*} \label{eq-bm-for-minkowski-billiard}
\widetilde c_{_{EHZ}}  \bigl ((K_1 + K_2)\times (T_1 + T_2) \bigr )^{\frac 1 2}  \geq
\widetilde c_{_{EHZ}}   (K_1 \times T_1)^{\frac 1 2}  + \widetilde c_{_{EHZ}}  ( K_2 \times T_2)^{\frac 1 2}.
\end{equation*}
Moreover, equality holds if and only if  $K_1 \times T_1$ and $K_2 \times T_2$ have a pair of capacity
 carriers which coincide up to translation and dilation. 
Let $T_1 = T_2 = T$, 
then, for any $\lambda \in (0,1)$ one has 
$$  \widetilde c_{_{EHZ}}  \bigl ((\lambda K_1 + (1-\lambda)  K_2)\times T \bigr )^{\frac 1 2}  \geq
\widetilde c_{_{EHZ}}   (\lambda K_1 \times \lambda T)^{\frac 1 2}  + \widetilde c_{_{EHZ}}  ( (1-\lambda) K_2 \times (1-\lambda) T)^{\frac 1 2}.$$
Using the homogeneity of $\widetilde c_{_{EHZ}}$, the fact that $\xi_T(K) = \widetilde c_{_{EHZ}}(K \times T)$,
and the weighted arithmetic-geometric mean inequality,  we obtain that for  $\lambda \in (0,1)$ 
$$    \xi_T (\lambda K_1 + (1-\lambda)  K_2) \geq   \xi_T(K_1)^{\lambda} \xi_T(K_2)^{1-\lambda}$$
In particular, this implies that for $K_1'= \lambda^{-1} K_1$, and $K_2'= (1 -\lambda)^{-1}K_2$ one has
$$ \xi_T(K_1 + K_2) = \xi_T(\lambda K_1'\ + (1-\lambda)K_2') \geq  \xi_T(K_1')^{\lambda} \xi_T(K_2')^{1-\lambda}$$
Next, we choose $\lambda \in (0,1)$ such that $\xi_T(K_1') = \xi_T(K_2')$. For this choice of $ \lambda$ we have
\begin{equation*} \xi_T(K_1 + K_2) \geq  \xi_T(K_1')^{\lambda} \xi_T(K_2')^{1- \lambda} =
 {\lambda} \xi_T (K_1') + (1- \lambda) \xi_T(K_2') = \xi_T(K_1) + \xi_T(K_2). \end{equation*}
To conclude, we obtain the following Brunn-Minkowski type inequality for $(K,T)$-billiard trajectories:
\begin{equation} \label{ex-desired-bm-ineq} \xi_T(K_1 + K_2) \geq  \xi_T(K_1) + \xi_T(K_2). \end{equation}
In particular, in the Euclidean case where $T=B$ we obtain the inequality in Theorem \ref{BM-for-Bill} for smooth and strictly convex $K_i$. 
The not-strictly convex case follows immediately from 
the fact that $\xi_T(K)$ is continuous with respect to the Hausdorff topology on
the class of convex bodies.

For the equality case in Theorem \ref{BM-for-Bill}, note that equality holds if and only if  $K_1 \times B$ and $K_2 \times B$ have a pair of capacity
 carriers $\gamma_1, \gamma_2$ which coincide up to translation and dilation. In particular, $\pi_p(\gamma_i)$  are   $K_i$-billiard trajectories in $B$ ($i=1,2$) which coincide up to translation and dilation. 
 It is not hard to check that this is possible if and only if
they coincide, which by duality means that 
their exists a closed curve which, up to translation, is a
length-minimizing billiard trajectory in both $K_1$ and $K_2$. 
This together with~$(\ref{ex-desired-bm-ineq})$ completes the proof of Theorem~\ref{BM-for-Bill}.

\subsection{Bounding $\xi(K)$ in terms of ${\rm Vol}(K)$.}
The main ingredient we need for the proof of this theorem is the following result from~\cite{AMO}, which provides a
dimension-independent bound for a symplectic capacity of a convex body in terms of its volume radius.

\begin{theorem} \label{thm-AAOM}
There exists a positive universal constant $A_0$ such that for every even dimension $2n$,
any symplectic capacity $c$, and any convex body $\Sigma \subset {\mathbb R}^{2n}$,
 one has $$ {\frac {c(\Sigma)} {c(B^{2n})} } \leq A_0 \Bigl (  {\frac {  {\rm Vol}(\Sigma)}
{   {\rm Vol}(B^{2n}) } }  \Bigr )^{\frac 1 n},$$
where $B^{2n}$ is the Euclidean unit ball in ${\mathbb R}^{2n}$.
\end{theorem}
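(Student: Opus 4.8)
The plan is to settle the case of an ellipsoid by a direct computation, reduce the general bound to a statement about $2$-dimensional shadows of $\Sigma$, and establish that statement with the help of $M$-ellipsoids. For a centrally symmetric ellipsoid $\mathcal{E}\subseteq\mathbb{R}^{2n}$, Williamson's normal form produces a linear symplectomorphism carrying $\mathcal{E}$ to $\{(q,p):\sum_{i=1}^{n}(q_i^{2}+p_i^{2})/d_i^{2}\le1\}$ with $0<d_1\le\cdots\le d_n$ and $\prod_i d_i^{2}={\rm Vol}(\mathcal{E})/{\rm Vol}(B^{2n})$. This body is contained in the closure of the symplectic cylinder $Z^{2n}(d_1)=B^{2}(d_1)\times\mathbb{C}^{n-1}$, so monotonicity together with the normalization $c(Z^{2n}(r))=\pi r^{2}$ yields $c(\mathcal{E})\le\pi d_1^{2}$ for every symplectic capacity $c$; since $d_1^{2}\le(\prod_i d_i^{2})^{1/n}$ by the arithmetic-geometric mean inequality and $c(B^{2n})=\pi$, the asserted inequality holds for ellipsoids (in fact with $A_0=1$).

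For a general convex body $\Sigma$, since symplectic capacities are symplectic invariants and monotone, one has $c(\Sigma)\le\overline{c}(\Sigma)$, where $\overline{c}(\Sigma)$ is the cylindrical capacity, the infimum of $\pi r^{2}$ over all symplectic embeddings of $\Sigma$ into $Z^{2n}(r)$; this infimum can only grow when one restricts to affine-symplectic embeddings. An affine symplectomorphism takes $\Sigma$ into $Z^{2n}(r)$ exactly when there are a symplectic $2$-plane $F$ and an area-preserving affine map from $F$ onto $\mathbb{R}^{2}$ carrying the symplectic-orthogonal projection $\pi_F\Sigma$ into the disc of radius $r$. Applying John's theorem inside $F$ --- every planar convex body is carried by an area-preserving linear map into a concentric disc of area at most a universal multiple of its own --- one gets $c(\Sigma)\le\overline{c}(\Sigma)\le A_1\inf_{F}{\rm Area}(\pi_F\Sigma)$, the infimum over symplectic $2$-planes $F$, with $A_1$ universal. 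So everything reduces to the convex-geometric assertion: every convex body $\Sigma\subseteq\mathbb{R}^{2n}$ has a symplectic $2$-plane whose shadow has area at most $A_2\,({\rm Vol}(\Sigma)/{\rm Vol}(B^{2n}))^{1/n}$, with $A_2$ universal; the theorem then follows with $A_0=A_1A_2/\pi$.

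To produce such a plane I would put $\Sigma$ in a good position. By Milman's theorem, $\Sigma$ has an $M$-ellipsoid $\mathcal{E}$ with ${\rm Vol}(\mathcal{E})={\rm Vol}(\Sigma)$ whose covering numbers satisfy $N(\Sigma,\mathcal{E}),N(\mathcal{E},\Sigma)\le e^{cn}$ (here $N(A,B)$ is the least number of translates of $B$ that cover $A$). Bringing $\mathcal{E}$ to Williamson form by $\Phi_0\in{\rm Sp}(2n)$, the coordinate symplectic plane $E_1$ carries the smallest shadow of $\Phi_0\mathcal{E}$, of area $\pi d_1^{2}\le\pi({\rm Vol}(\Sigma)/{\rm Vol}(B^{2n}))^{1/n}$, while the coordinate symplectic planes $E_1,\dots,E_n$ form a Loomis--Whitney decomposition of $\mathbb{R}^{2n}$, so ${\rm Vol}(\Phi_0\Sigma)\le\prod_{i=1}^{n}{\rm Area}(\pi_{E_i}\Phi_0\Sigma)$. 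The step I expect to be the main obstacle is to control the shadows of $\Phi_0\Sigma$ rather than of $\Phi_0\mathcal{E}$: closeness in covering numbers is genuinely too weak, since a body can be covered by $e^{cn}$ translates of a thin symplectic cylinder yet cast an exponentially large shadow on that cylinder's core plane if it is wide there and correspondingly thin in the complementary directions, and the crude bound ${\rm Area}(\pi_{E_i}\Phi_0\Sigma)\le N(\Sigma,\mathcal{E})\,{\rm Area}(\pi_{E_i}\Phi_0\mathcal{E})$ loses a factor $e^{cn}$ on each of the $n$ planes. Overcoming this needs finer features of the $M$-position than bare covering numbers --- in effect a reverse Loomis--Whitney phenomenon there, forcing ${\rm Vol}(\Sigma)$ to be large whenever all the shadows of $\Phi_0\Sigma$ on $E_1,\dots,E_n$ are large (so that some shadow is small), together with the inverse Brunn--Minkowski inequality ${\rm Vol}(\Sigma+\mathcal{E})\le e^{cn}{\rm Vol}(\mathcal{E})$ to tame the auxiliary convex hulls produced when one unwinds the coverings. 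Winning a dimension-free constant from this balance, rather than the exponential one a naive argument produces, is the technical core; this is the content of~\cite{AMO}.
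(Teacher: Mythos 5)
The first thing to say is that the paper contains no proof of this statement for you to match: Theorem~\ref{thm-AAOM} is imported as a black box from~\cite{AMO} and is only \emph{used} (to deduce Theorem~\ref{main-application2}). Judged as a self-contained argument, your proposal has a genuine gap, and you say so yourself: the ellipsoid computation (Williamson normal form, inclusion in $B^{2}(d_1)\times\mathbb{C}^{n-1}$, $d_1^{2}\le(\prod_i d_i^{2})^{1/n}$) and the reduction $c(\Sigma)\le\overline{c}(\Sigma)\le A_1\inf_F\mathrm{Area}(\pi_F\Sigma)$ over symplectic $2$-planes are correct but routine, while the entire content of the theorem --- a constant independent of the dimension --- is concentrated in the remaining shadow statement, which you do not prove and explicitly defer to~\cite{AMO}. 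Citing~\cite{AMO} for the core step is exactly what the paper does, so the proposal does not supply a proof; it supplies a preamble to the same citation.

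Two further cautions about the step you leave open. First, your reduction replaces the theorem by a formally \emph{stronger}, purely linear statement: that the smallest symplectic $2$-plane shadow (equivalently, the affine-linear cylindrical capacity) is bounded by the volume radius with a universal constant. Nothing in your sketch shows that the M-ellipsoid method reaches this linear statement, and, as you concede, the bare covering estimates $N(\Sigma,\mathcal{E})\le e^{cn}$ lose a factor $e^{cn}$ on each two-dimensional shadow, which is fatal there (unlike on volumes, where $e^{cn}$ costs only a constant after the $1/n$ power --- this is precisely the kind of bookkeeping a correct proof must exploit). Second, the ``reverse Loomis--Whitney'' mechanism you invoke is not calibrated correctly as stated: for $\Sigma=B^{2n}$ the product of the $n$ coordinate symplectic shadows equals $\pi^{n}=n!\,\mathrm{Vol}(B^{2n})$, so no inequality of the form $\prod_{i}\mathrm{Area}(\pi_{E_i}\Phi_0\Sigma)\le C^{n}\,\mathrm{Vol}(\Sigma)$ can hold; for your route one needs $\min_i\mathrm{Area}(\pi_{E_i}\Phi_0\Sigma)\le C\,\pi\,\bigl(\mathrm{Vol}(\Sigma)/\mathrm{Vol}(B^{2n})\bigr)^{1/n}$, i.e.\ a gain of order $(n!)^{1/n}\sim n/e$ over the trivial use of Loomis--Whitney, and the proposal offers no mechanism producing this gain. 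So the missing step is not technical bookkeeping around a known position of $\Sigma$; it is the theorem itself.
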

From Theorem~\ref{thm-AAOM}  it follows that for any two convex bodies $K \subset {
\mathbb R}^n_q$, and $T \subset {\mathbb R}^n_p$, 
$$ \widetilde c_{_{EHZ}}(K \times T) \leq  A_1 n
 {\rm Vol}(K)^{\frac 1 n} {\rm Vol}(T)^{\frac 1 n},$$
for some positive universal constant $A_1$.
In particular, for $T=B^n \subset {\mathbb R}^n_p$ we obtain:
$$ \xi(K) =\widetilde  c_{_{EHZ}}(K \times B) \leq A_2 \sqrt{n} \, {\rm Vol}(K)^{\frac 1 n},$$
for some positive universal constant $A_2$. The proof of
Theorem~\ref{main-application2} is thus complete.
\subsection{Bounding $\xi(K)$ in terms of ${\rm inrad}(K)$.}
Let $K^* = {\frac 1 2} (K-K)$ denote the Minkowski symmetral of $K$ i.e.,
half of  the difference body. From Theorem~\ref{BM-for-Bill} and
 Theorem~\ref{thm-ghomi}  it follows  that:
\begin{equation} \label{first-estimate-on-xi-K}
\xi(K) = {\frac {\xi(K) + \xi(-K)} {2} } \leq \xi (K^*) = 4 \, inrad (K^*).
\end{equation}
Next, in order to compare $inrad(K^*)$ with $inrad(K)$, we shall use a
classical result from convexity  (see~\cite{Sch1,Ste})
which states that for a convex body $K$, one has
\begin{equation*}  
 width(K) \leq (n+1)  inrad(K),
\end{equation*}
 where $width(K)$ is the minimal width of the body $K$.
Combining 
this inequality with the (easily verified) fact that $width(K^*) = width(K)$, we conclude that
\begin{equation} \label{comparison-of-inradii}
inrad(K^*) = {\frac 1 2} width(K^*) = {\frac 1 2} width(K) \leq {\frac { (n+1) } 2 } inrad(K).
\end{equation}
Theorem~\ref{main-application} now follows from~$(\ref{first-estimate-on-xi-K})$
and~$(\ref{comparison-of-inradii}$).
 
\subsection{Monotonicity property of  billiards}
As before, we shall prove the monotonicity property in the more general case of Minkowski billiards.
It follows immediately from the fact that $\widetilde c_{_{EHZ}}$ is a symplectic capacity
that for any smooth convex bodies $K_1,K_2 \subset {\mathbb R}^n_q$, and $T_1,T_2
\subseteq {\mathbb R}^n_p$, such that $K_1 \times T_1 \subseteq K_2 \times T_2$, one has
$c_{_{EHZ}}(K_1 \times T_1) \leq  c_{_{EHZ}}(K_2 \times T_2)$.  This implies  
that
the length of the shortest periodic $T_1$-billiard trajectory in $K_1$ is less than or equal to the length of the shortest periodic $T_2$-billiard trajectory in $K_2$;
where the lengths of these trajectories is measured with respect to the gauge functions $h_{T_1}$, $h_{T_2}$ respectively. 
In particular, when $T_1=T_2 = T$, one has $\xi_T(K_1) \leq \xi_T(K_2)$. The case $T=B$, where $B$ is the Euclidean unit ball
in ${\mathbb R}^n_p$, completes the proof of
Proposition~\ref{prop-about-monotonicity}.

\section{A gliding-proper dichotomy}\label{sec:gpdich}

Here we prove Propositions~\ref{dichotomy-prop}.

\begin{proof}[\bf Proof of Proposition~\ref{dichotomy-prop}]
Let $\gamma(t)=(q(t),p(t)), t \in S^1$ be a $(K,T)$-billiard trajectory, and assume
 that $\gamma(t_0) \in \partial K \times \partial T$ for some $t_0 \in S^1$.

 \noindent {\bf Step I:} Assume that $\gamma(t_0) \in  (\partial K \times \partial  T)
 \setminus {\cal A}$. Then, we claim that there exists $\varepsilon_0 > 0 $ such that
  for every $0 < \varepsilon < \varepsilon_0$ one has $\gamma(t_0 \pm \varepsilon)
  \notin \partial K \times \partial T$. Indeed, it follows from~$(\ref{eq-the-cone})$ that
$\gamma$ has left and right derivatives at $t_0$ and
 $\dot \gamma^{\pm}(t_0)$  belong to the positive  cone spanned by $(0,\nabla g_K(\gamma_q(t_0)))$ and
    $(- \nabla g_T(\gamma_p(t_0)),0)$ i.e., there exists constants $\alpha_{\pm}, \beta_{\pm}$ such that
 \begin{equation*}
\dot \gamma_{\pm} (t_0) =   \alpha_{\pm} (-\nabla g_T(\gamma_p(t_0)) ,0) + \beta_{\pm}
(0,\nabla g_K(\gamma_q(t_0))),
\end{equation*}
where $\alpha_{\pm},\beta_{\pm} \geq 0$ and  $(\alpha_{\pm}, \beta_{\pm}) \neq (0,0)$.
In particular, we conclude that for $\varepsilon > 0$:
\begin{equation*}
\bigl (\gamma_q(t_0 + \varepsilon), \gamma_p(t_0 + \varepsilon) \bigr ) =
 \bigl (\gamma_q(t_0), \gamma_p(t_0) \bigr) + \varepsilon
 \bigl (- \alpha_{+} \nabla g_T(\gamma_p(t_0)),  \beta_{+}
\nabla g_K(\gamma_q(t_0)) \bigr ) + o(\varepsilon).
\end{equation*}
Recall that for a  smooth convex body $\Sigma \subset {\mathbb R}^n$, and $u \in \partial \Sigma$  one has
 $\langle  v-u , \nabla g_{\Sigma}(u) \rangle \leq 0$ for every $v \in \Sigma$. Thus,
\begin{equation*}
\begin{aligned}
\left        \langle \bigl (\gamma_q(t_0) - \varepsilon \alpha_{+} \nabla g_T(\gamma_p(t_0))
 + o(\varepsilon) \bigr ) -  \gamma_q(t_0)    , \nabla g_K(\gamma_q(t_0))  \right \rangle &\leq 0 ,\\
       \left        \langle     \bigl ( \gamma_p(t_0) + \varepsilon \beta_{+} \nabla g_K(\gamma_q(t_0))
        + o(\varepsilon) \bigr )   - \gamma_p(t_0)  , \nabla g_T(\gamma_p(t_0))  \right \rangle  & \leq 0,
       \end{aligned}
\end{equation*}
 and hence
\begin{equation*} \label{eq-scalar-product-of-normals1}
\begin{aligned}
 \varepsilon \alpha_{+} \langle   \nabla g_T(\gamma_p(t_0))  ,  \nabla g_K(\gamma_q(t_0))     \rangle
   &\geq o(\varepsilon) ,\\
        \varepsilon  \beta_{+} \langle   \nabla g_T(\gamma_p(t_0))  ,  \nabla g_K(\gamma_q(t_0))     \rangle     & \leq o(\varepsilon).
       \end{aligned}
\end{equation*}
Dividing by $\varepsilon$ and taking the limit as $\varepsilon \rightarrow 0^{+}$ we obtain
\begin{equation*} \label{eq-scalar-product-of-normals}
\begin{aligned}
 \alpha_{+} \langle   \nabla g_T(\gamma_p(t_0))  ,  \nabla g_K(\gamma_q(t_0))     \rangle
   &\geq 0 ,\\
          \beta_{+} \langle   \nabla g_T(\gamma_p(t_0))  ,  \nabla g_K(\gamma_q(t_0))     \rangle     & \leq 0.
       \end{aligned}
\end{equation*}
Moreover, a straightforward computation shows that
\begin{equation}   \label{eq-about-var-of-gk}
 \begin{aligned}
g_K(\gamma_q(t_0 + \varepsilon))   & =  g_K(\gamma_q(t_0)) -
\varepsilon \alpha_{+}  \langle   \nabla g_K(\gamma_q(t_0))  ,
 \nabla g_T(\gamma_p(t_0))
   \rangle + o(\varepsilon)  \\ & =  1 - \varepsilon \alpha_{+}
    \langle   \nabla g_K(\gamma_q(t_0))  ,  \nabla g_T(\gamma_p(t_0))     \rangle + o(\varepsilon),
       \end{aligned}
\end{equation}
and,
\begin{equation} \label{eq-about-var-of-gt}
 \begin{aligned}
g_T(\gamma_p(t_0 + \varepsilon))  
=  1 + \varepsilon \beta_{+}  \langle   \nabla g_K(\gamma_q(t_0))  ,  \nabla g_T(\gamma_p(t_0))     \rangle + o(\varepsilon).
       \end{aligned}
\end{equation}
Next, assume without loss of generality that $ \langle   \nabla g_T(\gamma_p(t_0))  ,
 \nabla g_K(\gamma_q(t_0))     \rangle   > 0$
(the argument applies verbatim in the opposite case).  Under this assumption,
we see that  $\beta_{+}=0$. 
Therefore, by assumption, $\alpha_{+} > 0$,
and equation~$(\ref{eq-about-var-of-gk})$ implies that for small enough $\varepsilon$, $g_K(\gamma_q(t_0 + \varepsilon))  < 1$.
More precisely there is $\varepsilon_0 > 0$ such that
 $\gamma(t_0 + \varepsilon) \in {\rm int}(K) \times \partial T$, for all
 $0 < \varepsilon < \varepsilon_0$.  A similar argument shows that
 $\gamma(t_0 - \varepsilon) \in \partial K \times {\rm int}(T)$, for $0 < \varepsilon < \varepsilon_0$.
Thus, we conclude that any $(K,T)$-billiard trajectory that enters the boundary
$\partial K \times \partial T$ at a point not in ${\cal A}$
must instantly exit the boundary $\partial K \times \partial T$.

 \noindent {\bf Step II:} Next we show that a $(K,T)$-billiard trajectory $\gamma$ which enters the
 boundary $\partial K \times \partial T$ must do so at a point not in ${\cal A}$.
Indeed, assume without loss of generality that $\gamma(t_0) = (\gamma_q(t_0),\gamma_p(t_0))
\in \partial K \times {\rm int}(T)$.
 Then, by definition, there exists $\varepsilon>0$ such that for every
 $t \in [t_0,t_0 + \varepsilon)$ one has $\dot \gamma(t) = (0, d \nabla g_K(\gamma_q(t_0)))$,
 for some positive constant $d$, and that at time $t_0 + \varepsilon$ the trajectory
 hits  the boundary $\partial K \times \partial T$.
Since $T$ is a smooth convex body, and
$\gamma_p(t_0 + \varepsilon)
\in \partial T$  one has (as $\gamma_p(t_0) \in {\rm int}(T)$), that
$$  \langle  \gamma_p(t_0 + \varepsilon)  - \gamma_p(t_0) ,
 \nabla g_{T}(\gamma_p(t_0 + \varepsilon)  \rangle > 0.$$ Thus, it follows that
  $  \varepsilon d \langle  \nabla g_K(\gamma_q (t_0 + \varepsilon)) ,
  \nabla g_{T}(\gamma_p(t_0 + \varepsilon)  \rangle > 0$. This completes the proof of Step II.


 \noindent {\bf Step III:}
Toward a contradiction, assume that $\gamma$ is a $(K,T)$-billiard trajectory
which intersects ${\cal A}$ but does not lie exclusively in it.
We claim that in such a case the trajectory must intersect $\partial K \times \partial
 T \setminus {\cal A}$ at a sequence of points which converges to a point in ${\cal A}$.
Indeed, consider the point $\gamma(t_{\tau})$ which is the last (first) point lying in ${\cal A}$.
That is, $t_\tau$ is maximal (minimal) with this property, where we use that the condition of belonging to ${\cal A}$ is closed.  
some point $\gamma(t)$ where  $ t_{\tau}<t < t_{\tau} + \varepsilon$ (thus lying outside
of ${\cal A}$). Either $\gamma(t)  \in \partial K \times \partial T \setminus {\cal A}$
of $\gamma(t) \not\in \partial K \times \partial T$. In the latter case, assume without loss of generality that $\gamma(t) \in int(K) \times \partial T$.
By Step II, there exists a point $t_{\tau} < t' < t$ such that $\gamma(t') \in \partial K
\times \partial T \setminus {\cal A}$.
In both cases we have found a time $t_{\tau} < t' < t_{\tau} + \varepsilon$ such that $\gamma(t') \in
\partial K \times \partial T \setminus {\cal A}$. Since this applies for every $\varepsilon$ we have
found a sequence  $t_i \rightarrow t_{\tau}$ such that $\gamma(t_i) \in \partial K \times
\partial T \setminus {\cal A}$. We next show that in such
a case one has $ \sum_{i=1}^{\infty} | q_{i+1} - q_i | < \infty$, and $ \sum_{i=1}^{\infty}
 | p_{i+1} - p_i | < \infty$, where $\gamma(t_i) = (q_i,p_i)$. Indeed, as $p_k \rightarrow p$ we may choose a coordinate system such that every coordinate of $\nabla g_T(p)$ is non zero.
We denote the $j^{th}$-coordinate of a vector $v$ by $v^j$. Therefore, we may also assume that $(\nabla g_T(p_k))^j \neq 0$ for all $k$ and $j$. Next we estimate
\begin{equation}\label{eq:telescop}
\sum_{i=1}^{\infty} | q_{i+1} - q_i | \leq  \sqrt{n} \sum_{i=1}^{\infty} \sum_{j=1}^n | q^j_{i+1} - q^j_i |. 
\end{equation}   
Since $q^j_{i+1} - q^j_i = Const (\nabla g_T(p_i))^j$ it is of constant sign for all $i$. The right hand side of (\ref{eq:telescop}) is thus  a telescopic sum and hence the left hand side of (\ref{eq:telescop})  converges.

To arrive at a contradiction, we use a  result by P. Gruber (Theorem 2 in \cite{Gruber}, for the planar case see~\cite{Halpern}) which states that whenever a convex body is $C^3$ smooth and has  positive Gaussian curvature, there exists no billiard trajectory $(q_i, p_i)$ for which 
the above two series converge. 
The theorem is proved for the Euclidean case but the proof can be naturally adjusted  to the Minkowski setting. 
Note that in the terminology of \cite{Gruber} this means that no trajectory ``terminates on the boundary''.

 \noindent {\bf Step IV:} Here we prove the second part of Proposition~\ref{dichotomy-prop}.
  From Step I we conclude that if $\gamma$ is a gliding trajectory, then
  $\gamma \in {\cal A}$ i.e., for every $t \in S^1$ one has
  \begin{equation} \label{eq-about-gamma-in-the-boundary}  \dot \gamma(t) =
   (\dot \gamma_q(t), \dot \gamma_p(t))=
 (- \alpha(t) \, \nabla g_{T}(\gamma_p(t)), \beta(t)  \nabla g_{K}(\gamma_q(t))), \end{equation}
where $\alpha(t)$ and $ \beta(t)$ are two smooth   positive real functions, and
\begin{equation}  \label{eq-perp-normal-vec1} \langle \nabla g_T(\gamma_p(t)),
 \nabla g_K(\gamma_q(t)) \rangle = 0. \end{equation}
Taking the time-derivative of~$(\ref{eq-perp-normal-vec1})$  we obtain
 \begin{equation}  \label{yet-another-eq-about-perp} \langle \nabla^2 g_T(\gamma_p(t))
  \dot \gamma_p(t), \nabla g_K(\gamma_q(t) )\rangle  +
 \langle \nabla g_T(\gamma_p(t)), \nabla^2 g_K(\gamma_q(t))  \dot \gamma_q(t) \rangle = 0. \end{equation}
Equality~$(\ref{condition-on-alpha-and-beta})$ now follows immediately from the combination
 of~$(\ref{eq-about-gamma-in-the-boundary})$ and~$(\ref{yet-another-eq-about-perp})$.
In order to prove that both $\alpha$ and $\beta$ are positive, we note that if e.g., $\alpha(t)=0$, then by~$(\ref{yet-another-eq-about-perp})$ one get 
that 
 \begin{equation}  \beta(t)  \langle \nabla^2 g_T(\gamma_p(t))
  \nabla g_K(\gamma_q(t)) , \nabla g_K(\gamma_q(t) )\rangle = 0. \end{equation}
However, the only vector in the kernel of $\nabla^2 g_T(\gamma_p(t))$ is $\nabla g_T(\gamma_p(t)) $ which is orthogonal to $ \nabla g_K(\gamma_q(t)) $.
Therefore $\beta(t)=0$. This is in contradiction to the assumption $(\alpha,\beta) \neq (0,0)$.
 This completes the proof of  Proposition~\ref{dichotomy-prop}.
\end{proof}

\section{Appendix}

In this section we provide the proofs of Proposition \ref{proposition-EHZ-capacity-non-smooth} and of Proposition \ref{prop-about-the-1-1-corrospendence}. The methods are quote similar to those used in the smooth case, and are repeated with the necessary changes for completeness. 

We shall use a bijection between 
generalized closed characteristics $\gamma$ on $\partial \Sigma$, and 
so called weak critical points of 
the functional 
$$I_{\Sigma}(z):=  \int_0^{2 \pi} h_{\Sigma}^2(\dot z(t)) dt,$$
defined on the loop space ${\cal E}$. The set $\Sigma \in {\widetilde {\cal K}}^{2n}$ will be chosen to be of the form $\Sigma= K \times T$.
This bijection is very similar to the one defined in~\cite{AAO1}, however, an adaptation is needed since in~\cite{AAO1} only smooth hypersurfaces were considered (and there was no need to consider weak solutions of the equation for critical points). 
The space of weak critical points of the functional $I_{\Sigma}$ is defined by 
$$ {\cal E}^{\dagger} := \{ z \in {\cal E} \ ; \  {\rm there \ is \ } \alpha \in {\mathbb R}^{2n} \ {\rm such \ that \ } I_{\Sigma}(z) J z + \alpha \in \partial h^2_{\Sigma}(\dot z) \}.$$
This can be compared with~\cite{AAO1}, where the inclusion sign is substituted by equality. 
Not surprisingly, it turns out that the minimum of $I_{\Sigma}(z)$ over the space ${\cal E}$ is attained on a weak critical point. This is proved in Lemma~\ref{Lemma-about-weak-critical} 
below.  The following lemma, explains the above mentioned bijection and its properties, is also proven below. 

\begin{lemma} \label{lemma-about-correspond-crit-pt}
Let  $\Sigma \in {\widetilde {\cal K}}^{2n}$. There is a bijection ${\cal F}$ between the set of
generalized closed characteristics $\gamma$ on $\partial \Sigma$, and the set of elements $z \in {\cal E}^{\dagger}$.
 Moreover, under this bijection, 
$A(\gamma) =  {\frac {\pi} 2} I_{\Sigma}( z)$, where
 $z = {\cal F}(\gamma)$. In particular, the minimum  of $I_{\Sigma}(z)$ via this bijection corresponds to the action minimizing generalized closed characteristic.
 \end{lemma}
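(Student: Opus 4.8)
\textbf{Proof plan for Lemma~\ref{lemma-about-correspond-crit-pt}.}
The plan is to set up the bijection ${\cal F}$ explicitly and then verify the three claims: that ${\cal F}$ is well-defined, that it is a bijection, and that it scales the action by $\pi/2$. Given a generalized closed characteristic $\gamma$ on $\partial\Sigma$, it is (the image of) a piecewise-smooth loop with $\dot\gamma^\pm(t)\in JN_\Sigma(\gamma(t))$; I would first reparametrize $\gamma$ proportionally to its ``$h_\Sigma^2$-arclength'' so that $h_\Sigma^2(\dot\gamma(t))$ is constant in $t$ (using $t\in[0,2\pi]$), then define $z(t)=c\,\dot\gamma(t)$ where $c$ is chosen so that the normalization $\frac12\int_0^{2\pi}\langle Jz,\dot z\rangle\,dt=1$ holds; one also subtracts the mean so that $\int_0^{2\pi}z(t)\,dt=0$, which is automatic since $\dot z$ is the derivative of a periodic function and $z$ is (up to the mean-zero shift) a scalar multiple of $\dot\gamma$. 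The point is that $z$ so defined lies in ${\cal E}$, and the inclusion $\dot\gamma^\pm(t)\in JN_\Sigma(\gamma(t))$, rewritten via the equivalent formulation in terms of $J\partial(g_\Sigma)^\alpha$ recorded after the definition of generalized closed characteristic, translates precisely into the weak-critical-point condition $I_\Sigma(z)Jz+\alpha\in\partial h_\Sigma^2(\dot z)$ for a suitable constant vector $\alpha\in{\mathbb R}^{2n}$. Here one uses the duality between the subdifferential of $h_\Sigma^2$ (a function of the velocity, living in the ``$q$-type'' space) and the subdifferential of $g_{\Sigma^\circ}^2=h_\Sigma^2$-dual, i.e. $\partial h_\Sigma^2$ at $\dot z$ is, up to a scalar depending on the value $h_\Sigma(\dot z)$, the normal cone description of $\partial\Sigma$ evaluated at the corresponding boundary point $\gamma(t)$; the constant $I_\Sigma(z)$ appears because after reparametrization $h_\Sigma^2(\dot z)$ is constant and equals $\frac{1}{2\pi}I_\Sigma(z)$.

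For the inverse map, given $z\in{\cal E}^\dagger$ with witness $\alpha$, I would define $\gamma(t)=$ the appropriate primitive: integrating the inclusion relation, the relation $I_\Sigma(z)Jz+\alpha\in\partial h_\Sigma^2(\dot z)$ combined with $1$-homogeneity considerations forces $\dot z$ to be (a.e.) parallel to $JN_\Sigma(\gamma(t))^{-1}$-type data, so one sets $\gamma(t)=J^{-1}\bigl(I_\Sigma(z)z(t)+\alpha\bigr)/I_\Sigma(z)$, which is a piecewise-smooth loop (periodicity is clear, and piecewise-smoothness follows since $z\in W^{1,2}$ and the inclusion pins down $\dot z$ off a null set), and one checks $\gamma(t)\in\partial\Sigma$ using the remark in the excerpt that it suffices to have one point on $\partial\Sigma$ together with the velocity condition and convexity. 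One then verifies ${\cal F}\circ{\cal F}^{-1}=\mathrm{id}$ and ${\cal F}^{-1}\circ{\cal F}=\mathrm{id}$ up to the reparametrization equivalence that is built into the notion of a closed characteristic as an image (so the bijection is between reparametrization classes of generalized closed characteristics and elements of ${\cal E}^\dagger$, consistent with how ${\cal E}$ already normalizes the period).

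The action identity is then a direct computation: $A(\gamma)=\frac12\int_0^{2\pi}\langle J\gamma(t),\dot\gamma(t)\rangle\,dt$, and substituting $\gamma=J^{-1}(I_\Sigma(z)z+\alpha)/I_\Sigma(z)$ (so $\dot\gamma=J^{-1}\dot z/1$ after the scaling, with the $\alpha$-term contributing zero because $\int\dot z=0$ and $\langle J\alpha,\dot\gamma\rangle$ integrates to a total derivative) gives $A(\gamma)=\frac{1}{2}\int_0^{2\pi}\langle z,\dot z\rangle_{\text{twisted}}\,dt$ up to the scalar $I_\Sigma(z)$; carefully tracking the constants — the normalization $\frac12\int\langle Jz,\dot z\rangle=1$ on ${\cal E}$, the reparametrization making $h_\Sigma^2(\dot z)$ constant, and the factor from inverting $J$ — yields $A(\gamma)=\frac{\pi}{2}I_\Sigma(z)$. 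The final sentence of the lemma, that the minimum of $I_\Sigma$ corresponds to the action-minimizing generalized closed characteristic, is then immediate from this identity together with Lemma~\ref{Lemma-about-weak-critical} (which guarantees the minimum is attained in ${\cal E}^\dagger$).

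\textbf{Main obstacle.} The delicate point is not the formal bookkeeping of constants but the translation between the two descriptions of ``criticality'' in the \emph{non-smooth} setting: showing that the weak-critical-point inclusion $I_\Sigma(z)Jz+\alpha\in\partial h_\Sigma^2(\dot z)$ is genuinely equivalent, pointwise almost everywhere and then (by the regularity it forces) in the piecewise-smooth sense, to the generalized-characteristic condition $\dot\gamma^\pm\in JN_\Sigma(\gamma)$. This requires care with subdifferential calculus for the squared gauge/support functions at non-differentiability points (corners of $K\times T$), with the fact that $\partial h_\Sigma^2(v)=2h_\Sigma(v)\,\partial h_\Sigma(v)$ and that $\partial h_\Sigma(v)$ is the face of $\Sigma$ exposed by $v$, and with the one-sided-derivative phenomena at the reflection times. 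Once this equivalence is nailed down the rest is routine; I would isolate it as the technical heart of the argument and handle the corner/non-smooth cases by the duality $h_\Sigma=g_{\Sigma^\circ}$ and the convexity-plus-periodicity remark already quoted from~\cite{Ku1}.
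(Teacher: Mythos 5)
Your proposal contains a structural error in the very definition of the map, not just in untracked constants. You define the forward map by $z(t)=c\,\dot\gamma(t)$ (mean-subtracted). This cannot be the right correspondence: for a proper trajectory in $\Sigma=K\times T$ the velocity $\dot\gamma$ has jump discontinuities at the reflection times, so $c\,\dot\gamma$ is not absolutely continuous and hence does not even lie in $W^{1,2}(S^1,{\mathbb R}^{2n})$, i.e.\ not in ${\cal E}$; worse, with this choice the defining inclusion of ${\cal E}^{\dagger}$ would read $I_{\Sigma}(z)Jz+\alpha\in\partial h_{\Sigma}^2(c\,\ddot\gamma)$, and since $\ddot\gamma=0$ a.e.\ between bounces while $\partial h_{\Sigma}^2(0)=\{0\}$, it would force $\dot\gamma$ to be constant, which is absurd. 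Your forward map is also inconsistent with your own inverse formula $\gamma=J^{-1}\bigl(I_{\Sigma}(z)z+\alpha\bigr)/I_{\Sigma}(z)$, which implies $\dot z\propto J^{-1}\dot\gamma$, i.e.\ $z$ is (a multiple of) $J^{-1}(\gamma-\bar\gamma)$, not of $\dot\gamma$. The correct dictionary, which is what the paper uses, is exactly the latter: ${\cal F}(\gamma)=J^{-1}\bigl((2\pi d)^{-1/2}(\gamma-\tfrac 1{2\pi}\int_0^{2\pi}\gamma)\bigr)$ for a characteristic parametrized so that $\dot\gamma\in d\,J\partial h_{\Sigma^{\circ}}^2(\gamma)$, and conversely $\gamma=\bigl(\tfrac{\pi}{2\lambda}\bigr)^{1/2}(\lambda Jz+\alpha)$ with $\lambda=I_{\Sigma}(z)$. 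The passage between the inclusion $\partial h_{\Sigma}^2(\dot z)\ni\lambda Jz+\alpha$ and the characteristic condition is then done in one stroke by the Legendre transform identity ${\cal L}(h_{\Sigma}^2)=\tfrac14 h_{\Sigma^{\circ}}^2$ together with the inversion of subdifferentials under ${\cal L}$ (Rockafellar, Cor.\ 23.5.1), rather than by the pointwise face-duality analysis you flag as the ``technical heart''.

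Beyond that, your normalization $\gamma=J^{-1}(\lambda z+\alpha)/\lambda$ does not place $\gamma$ on $\partial\Sigma$ and would not produce the factor $\pi/2$; in the paper the scaling $\kappa=(8\pi/\lambda)^{1/2}$ is determined by Euler's identity for the $2$-homogeneous function $h_{\Sigma^{\circ}}^2$ so that $h_{\Sigma^{\circ}}^2(\gamma)\equiv 1$, and the same computation simultaneously yields $A(\gamma)=\tfrac{\pi}{2}I_{\Sigma}(z)$. Your final reduction of the ``minimum corresponds to the action minimizer'' claim to Lemma~\ref{Lemma-about-weak-critical} is correct and matches the paper, but as it stands the bijection itself — the core of the lemma — is mis-specified, so the argument does not go through without replacing the forward map by $z\propto J^{-1}(\gamma-\bar\gamma)$ and redoing the normalization.
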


\begin{proof}[{\bf Proof of Proposition~\ref{proposition-EHZ-capacity-non-smooth}}]
It is not hard to check that from the definition of  $\widetilde c_{_{EHZ}}$ (see Lemma~\ref{tech-lemma-about-extension}), 
and the fact that $ {\mathcal K}^{2n} $ is dense in $ \widetilde  {\mathcal K}^{2n}  $,  it follows that
\begin{equation}  \label{eq-ext-of-c-with-inf}
\widetilde  c_{_{\it EHZ}}(\Sigma)=  \inf_{z \in
{\cal E}}
  {\frac {\pi} {2 }} \int_0^{2 \pi} h_{ \Sigma}^{2}(\dot{z}(t))dt. \end{equation}

Thus, to conclude the proof of the  proposition it is enough to show that the infimum on the right hand side of~$(\ref{eq-ext-of-c-with-inf})$ is attained. 
To show this, we first claim that  
the functional $I_{\Sigma}$  is bounded from below
on ${\cal E}$, namely, there is a positive constant $\alpha$ such that $I_{\Sigma}(z) > \alpha$ for every $z \in {\cal E}$.
The proof of this fact can be found in~\cite{HZ}, Section 1.5 (cf.~\cite{AAO1}, Step I of Lemma 2.5).
Next we show that  
there is $\tilde z \in {\cal E}$ with
$$ \int_0^{2\pi} h_{\Sigma}^2 ( \dot{\tilde z}(t)) dt = \inf_{z \in
{\cal E}}  \int_0^{2\pi} h_{\Sigma}^2 ( \dot{z}(t)) dt =: \tilde \lambda >
0.$$ 
We follow closely the arguments of Step II in Lemma 2.5 from~\cite{AAO1} .
Pick a minimizing sequence $z_j \in {\cal E}$
such that
\begin{equation*} \lim_{j \rightarrow \infty} \int_0^{2\pi} h_{\Sigma}^2 ( \dot{z_j}(t))dt = \tilde \lambda. \end{equation*}
It follows from Step II of Lemma 2.5 in~\cite{AAO1} that there is a constant $C>0$ such that
\begin{equation} \label{some-bound-on-subsequence}
\| z_j \|_2 \leq 2 \pi \| \dot z_j \|_2 \leq C
\end{equation}
Moreover, it was shown ibid that there is a subsequence, also denoted by $ z_j $, which converges  both uniformly and weakly  in $W^{1,2}(S^1,{\mathbb R}^{2n})$
to an element 
$z_* \in {\cal E}$.
Hence, it is enough to show that $z_*
\in {\cal E}$ is indeed the required minimum. From the convexity
of $h_{\Sigma}^2$ we  deduce the point-wise estimate
$$  h_{\Sigma}^2({\dot z}_*(t)) -
h_{\Sigma}^2({\dot z}_j(t)) \leq \langle \partial h_{\Sigma}^2({\dot z}_*(t)), {\dot
z}_*(t)-{\dot z}_j(t) \rangle,$$
where the inequality hold in the set-theoretical sense, i.e, for any
element in the corresponding subdifferential. 

We need to make a measurable choice of elements in $\partial h_{\Sigma}^2({\dot z}_*(t))$, and show that the chosen function belongs to $L_2(S^1,{\mathbb
R}^{2n})$.  It is not hard to check that 
there exists some positive constant $C$ for which $| \partial
h_{\Sigma}^2(x)| \leq C |x|$ for all $x$, so that (since ${\dot z}_*(t)$ belongs to $L_2(S^1,{\mathbb
R}^{2n})$) boundedness is not a problem. 
The fact that there exist a measurable choice from the subgradient follows from a standard measure theoretic argument. 

For this choice we have
\begin{equation} \label{eq1000} \int_0^{ 2 \pi} h_{\Sigma}^2 ({\dot z}_*(t))dt - \int_0^{ 2
\pi} h_{\Sigma}^2 ({\dot z}_j(t))dt \leq \int_0^{ 2 \pi} \langle \partial
h_{\Sigma}^2({\dot z}_*(t)), {\dot z}_*(t)-{\dot z}_j(t) \rangle dt
\end{equation}
and the
right hand side of inequality~$(\ref{eq1000})$ tends to zero. Hence,
$$ \tilde \lambda \leq \int_{0}^{2 \pi} h_{\Sigma}^2( {\dot z}_*(t)) dt
\leq \liminf_{j \rightarrow \infty} \int_{0}^{2 \pi} h_{\Sigma}^2( {\dot
z}_j(t)) dt = \tilde \lambda,$$ and we have proved that $z_*$ is the
minimum of $I_{\Sigma}(z)$ on $ {\cal E}$.
\end{proof}

As was mentioned above, the minimum of $I_{\Sigma}$ ensured by the above proposition is in fact attained on a weak critical point, namely, 
a point $z \in {\cal E}^{\dagger}$, as we next prove. 
\begin{lemma} \label{Lemma-about-weak-critical}  Let $\Sigma \in {\widetilde {\cal K}}^{2n}$.
For any minimizer $z \in {\cal E}$ of 
the functional $I_{\Sigma}$  there is $\alpha \in {\mathbb R}^n$ such that $I_{\Sigma}(z)Jz + \alpha \in \partial h^2_{\Sigma}(\dot z)$.
\end{lemma}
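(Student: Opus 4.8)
The plan is to realize $z$ as a genuine constrained minimizer and to extract the Euler--Lagrange relation via a Lagrange-multiplier argument adapted to the nonsmooth setting, where $h_\Sigma^2$ is convex but not differentiable. First I would recall that the constraint manifold $\mathcal E$ cuts out $z$ with two constraints: the mean-zero condition $\int_0^{2\pi} z\,dt = 0$, which is linear, and the symplectic-area normalization $\mathcal A(z) := \tfrac12\int_0^{2\pi}\langle Jz,\dot z\rangle\,dt = 1$, which is a smooth quadratic (its differential at $z$ in direction $\zeta$ is $\int_0^{2\pi}\langle Jz,\dot\zeta\rangle\,dt$, using antisymmetry of $J$ and integration by parts). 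So the natural statement is: for the minimizer $z$ of the convex functional $I_\Sigma$ subject to these constraints, there is a multiplier $\mu$ and a vector $\alpha\in\mathbb R^{2n}$ (dual to the mean-zero constraint) with $\mu\,Jz + \alpha \in \partial I_\Sigma(z)$ in the appropriate sense, i.e.\ $\mu\,Jz(t) + \alpha \in \partial h_\Sigma^2(\dot z(t))$ a.e.; then one checks $\mu = I_\Sigma(z)$ by pairing the inclusion with $\dot z$.

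Concretely, I would argue as follows. Fix a competitor direction $\zeta \in W^{1,2}(S^1,\mathbb R^{2n})$ with $\int\zeta = 0$ and $\int_0^{2\pi}\langle Jz,\dot\zeta\rangle\,dt = 0$ (so that, to first order, $z + s\zeta$ stays on the constraint surface; the quadratic correction in $\mathcal A$ is handled by a standard reparametrization/rescaling so that one stays exactly in $\mathcal E$, as in the smooth treatment in~\cite{AAO1}). Since $I_\Sigma$ is convex, for $s>0$,
\[
0 \le \frac{I_\Sigma(z+s\zeta) - I_\Sigma(z)}{s} \le \int_0^{2\pi} \langle w(t), \dot\zeta(t)\rangle\,dt
\]
for any measurable selection $w(t)\in\partial h_\Sigma^2(\dot z(t)+s\dot\zeta(t))$; letting $s\to 0^+$ and using the bound $|\partial h_\Sigma^2(x)|\le C|x|$ together with $\dot z,\dot\zeta\in L^2$ and upper semicontinuity of the subdifferential, I get, after also running $\zeta\mapsto-\zeta$, that there is a single selection $w(t)\in\partial h_\Sigma^2(\dot z(t))$ with $\int_0^{2\pi}\langle w,\dot\zeta\rangle\,dt = 0$ for every admissible $\zeta$. (The clean way to get one selection working for all $\zeta$ simultaneously is to observe that $z$ minimizes the convex functional $\zeta\mapsto I_\Sigma(z+\zeta)$ over the closed subspace of admissible directions, so $0$ lies in the subdifferential of that restricted functional, which by the sum/chain rules for subdifferentials of convex integral functionals equals $\{\,\int\langle w,\dot\zeta\rangle : w\in\partial h_\Sigma^2(\dot z)\text{ a.e.}\,\}$ restricted to that subspace.)

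It then remains to annihilate the functional $\zeta\mapsto\int_0^{2\pi}\langle w,\dot\zeta\rangle\,dt$ on the orthogonal complement of the two constraint directions. The constraint $\int\zeta = 0$ contributes, after integrating by parts, the freedom to add a constant $\alpha$ to $w$ (a constant pairs to zero against $\dot\zeta$ for periodic $\zeta$, while $\int\langle\beta,\dot\zeta\rangle = 0$ automatically; the point is that the functional must vanish on all $\zeta$ with $\int\zeta=0$ and $\int\langle Jz,\dot\zeta\rangle = 0$, which forces $w$ to be a linear combination of the constant functions and of $Jz$ modulo exact forms). Carrying this out gives $w(t) = \mu\,Jz(t) + \alpha$ for some scalar $\mu$ and constant $\alpha$, i.e.\ $\mu\,Jz(t)+\alpha\in\partial h_\Sigma^2(\dot z(t))$ a.e. Finally, pairing this inclusion with $\dot z(t)$ and integrating: the left side gives $\mu\int_0^{2\pi}\langle Jz,\dot z\rangle\,dt + \alpha\int_0^{2\pi}\dot z\,dt = 2\mu$, while the right side gives $\int_0^{2\pi}\langle w,\dot z\rangle\,dt = \int_0^{2\pi} 2h_\Sigma^2(\dot z)\,dt = 2 I_\Sigma(z)$ by Euler's identity for the $2$-homogeneous convex function $h_\Sigma^2$ (valid for subgradients). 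Hence $\mu = I_\Sigma(z)$, which is exactly the claimed identity.

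The main obstacle I anticipate is the nonsmoothness: justifying that one measurable selection $w\in\partial h_\Sigma^2(\dot z)$ can be used uniformly over all admissible variations, and that the first-order analysis along the quadratic constraint $\mathcal A(z)=1$ is legitimate (one cannot just differentiate; one must either restrict to the tangent subspace and reproject, or rescale time as in~\cite{AAO1}). Both are handled by standard convex-analysis tools — subdifferential calculus for integral functionals, Aubin--Clarke-type measurable selection — but they are where the care is needed; everything after the inclusion $\mu Jz+\alpha\in\partial h_\Sigma^2(\dot z)$ is a routine computation using $2$-homogeneity.
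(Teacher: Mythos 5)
Your proposal is correct in substance, but it follows a genuinely different route from the paper. The paper does not argue via the tangent space to $\mathcal E$ at all: it drops the mean-zero condition, invokes Clarke's nonsmooth Lagrange multiplier rule \cite{Cl2} to get constants $\alpha_1,\alpha_2$ (not both zero) with $0\in\partial\bigl[\int_0^{2\pi}\alpha_1 h^2_\Sigma(\dot z)+\alpha_2\langle Jz,\dot z\rangle\,dt\bigr]$ in the Clarke sense, then follows \cite{Cl1} and a min--max theorem of \cite{Cl3} to extract a \emph{single} measurable selection from the generalized gradient valid for all test directions $h$, integrates by parts and uses a du Bois--Reymond argument to conclude $\alpha_1\eta(t)+2\alpha_2 Jz(t)=\mathrm{const}$ with $\eta(t)\in\partial h^2_\Sigma(\dot z(t))$, rules out $\alpha_1=0$, and finally identifies the multiplier via the Euler identity for subgradients of $2$-homogeneous functions \cite{Fuc-Zh}. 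You instead exploit convexity and $2$-homogeneity directly: the quadratic constraint is absorbed by rescaling (minimality over $\mathcal E$ gives $I_\Sigma(z+s\zeta)\ge I_\Sigma(z)\,\mathcal A(z+s\zeta)$ for small $s$ and arbitrary $\zeta$), the single selection comes from the standard representation of the subdifferential of a convex integral functional (Rockafellar/Ioffe-type measurable selection) rather than from Clarke's machinery, the form $w=\mu Jz+\alpha$ follows from annihilation on the kernel of $\zeta\mapsto\int\langle Jz,\dot\zeta\rangle\,dt$ plus du Bois--Reymond, and the same Euler identity gives $\mu=I_\Sigma(z)$. Your route is arguably more elementary (no generalized gradients, no min--max theorem, and the nondegeneracy issue corresponding to the paper's ``$\alpha_1\neq 0$'' never arises because the multiplier appears with coefficient one), at the price of needing the subdifferential-of-integral-functional theorem and the chain rule through $\zeta\mapsto\dot\zeta$, which you correctly flag. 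One phrase should be tightened: it is not literally true that $z$ minimizes $\zeta\mapsto I_\Sigma(z+\zeta)$ over the tangent subspace, since $z+\zeta$ need not lie in $\mathcal E$ and the rescaling only yields $I_\Sigma(z+\zeta)\ge I_\Sigma(z)+O(\|\zeta\|^2)$ there; but for a convex functional this first-order minimality already forces all directional derivatives at $0$ to be nonnegative on the subspace, hence $0$ lies in the subdifferential of the restriction, so your argument goes through once stated this way.
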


\begin{proof}
We will show that any minimizer $z \in {\cal E}$ of 
the functional $I_{\Sigma}$  (the existence of which is ensured  by Proposition~\ref{proposition-EHZ-capacity-non-smooth}) satisfies $z \in {\cal E}^{\dagger}$.
To prove this fact, let $z \in {\cal E}$ be a minimizer of $I_{\Sigma}$. Note that addition of a constant (vector) to $z$ does not change the action nor the functional $I_{\Sigma}$. 
Thus, we may ignore the normalization condition $\int_{0}^{2\pi} z(t)dt =0$. We next use some standard tools from non-smooth analysis namely non-smooth versions of Euler-Lagrange equations and of Lagrange multipliers theory. It follows from~\cite{Cl2} that there are constants $\alpha_1, \alpha_2$, not both equal to zero such that 
$$ 0 \in  \partial \left [ \int_0^{2 \pi}  \alpha_1 h^2_{\Sigma} (\dot z(t))  + \alpha_2   \langle Jz(t), \dot z(t) \rangle    dt  \right ]. $$
Here, the subdifferential $\partial F$, where $F : W^{1,2} (S^1,{\mathbb R}^{2n}) \rightarrow {\mathbb R}$ is in the sense of Clarke (see e.g.~\cite{Cl1}).
In particular, all directional derivatives (again, in the sense of Clarke) are non-negative. In what follows, we follow closely the reasoning in the proof of Theorem 2.2 in~\cite{Cl1}. 
We denote by $G(x,y) =  \alpha_1 h^2_{\Sigma} (y)  + \alpha_2   \langle Jx, y \rangle    $.
The information about directional derivatives can be written as:
\begin{eqnarray*}
0 &  \leq & \liminf_{\lambda \downarrow 0} \int_0^{2 \pi}   {\frac { G \bigl( z(t) + \lambda h(t) ,    \dot z(t) + \lambda  \dot h(t)   \bigr ) - G(z(t),\dot z(t) )} {\lambda}}    dt \\
  &\leq &  \int_0^{2 \pi}  \limsup_{\lambda \downarrow 0}   {\frac { G \bigl( z(t) + \lambda h(t) ,    \dot z(t) + \lambda  \dot h(t)   \bigr ) - G(z(t),\dot z(t) )} {\lambda}}    dt  \\
 & \leq & \int_0^{2 \pi} \max_{(v_1,v_2)  \in  \partial G(z(t),\dot z(t))}   \left (  h(t) v_1 + \dot h(t) v_2  \right ) dt   \\
& = & \max_{A}  \int_0^{2 \pi}  \left (  h(t) v_1(t) + \dot h(t) v_2(t)  \right ) dt,
\end{eqnarray*}
where the last maximum is taken over the set $A$ of  all measurable selections $(v_1(t),v_2(t)) \in \partial G(z(t),\dot z(t))$.
From this one concludes (as $h=0$ is a valid test function) that 
$$ \min_{h \in W^{1,2}(S^1,{\mathbb R}^{2n} )}  \max_{A}  \int_0^{2 \pi}  \left (  h(t) v_1(t) + \dot h(t) v_2(t)  \right ) dt = 0.$$
As in~\cite{Cl1}, we invoke a min-max theorem from~\cite{Cl3} to switch the minimum and the maximum, thus conclude that 
there is some element $(v_1(t),v_2(t)) \in A$ for which 
$$ \int_0^{2 \pi}  \left (  h(t) v_1(t) + \dot h(t) v_2(t)  \right ) dt \geq  0,$$
for all $h \in W^{1,2}(S^1,{\mathbb R}^{2n} )$. From linearity in $(h,\dot h)$ the above expression must vanish for all $h \in W^{1,2}(S^1,{\mathbb R}^{2n} )$.
We  next compute $\partial G$:
$$ \partial G(x,y) =  \left (  - \alpha_2 Jy,  \alpha_1 \partial h^2_{\Sigma} (y)  + \alpha_2  Jx \right) .$$ 
Therefore $v_1(t) =  - \alpha_2 J \dot z(t)$ and $v_2(t)  \in  \alpha_1 \partial h^2_{\Sigma} (\dot z(t) )  + \alpha_2  Jz(t)$. 
The above argument implies that for all $h \in W^{1,2}(S^1,{\mathbb R}^{2n} )$ one has
\begin{equation} \label{last-eq-we-need} \int_0^{2 \pi}  \left (  - \alpha_2  h(t) \cdot  J \dot z(t) + \dot h(t) \cdot  (   \alpha_1 \eta(t) + \alpha_2 Jz(t) )   \right ) dt =0,  \end{equation}
for some $\eta(t) \in \partial h^2_{\Sigma}(\dot z(t))$. Using integration by parts, one rewrites this as
$$ \int_0^{2 \pi}   \dot h(t) \cdot  \bigl( \alpha_1 \eta(t) +  2 \alpha_2 Jz(t) \bigr )   dt =0.$$
Since this equality holds for every $h \in W^{1,2}(S^1,{\mathbb R}^{2n} )$, one concludes that there is some constant vector $\alpha$ 
for which 
$$   \alpha_1 \eta(t) +  2 \alpha_2 Jz(t)  = \alpha.$$
Note that that $\alpha_1 \neq 0$  (as expression~$(\ref{last-eq-we-need}$) is linear in $\alpha_2$), and hence the above equation can be restated as: for some constants $\mu \in {\mathbb R}$ and $\alpha \in {\mathbb R}^{2n}$ one has
$$ \alpha + \mu J z \in \partial h^2_{\Sigma}(\dot z).$$
Moreover, it follows from the
Euler formula (see~\cite{Fuc-Zh}) that
$$ I_{\Sigma}(z) = \int_0^{2 \pi} h_{\Sigma}^2({\dot z}(t))dt = {\frac 1 2} \int _0^{2 \pi}
\langle  \partial  h_{\Sigma}^2({\dot z}(t)) , {\dot z}(t) \rangle dt =
{\frac \mu 2} \int_0^{2\pi} \langle Jz(t), {\dot z}(t) \rangle
dt = {\mu}.$$ 
This shows that $z \in {\cal E}^{\dagger}$, and the proof of  Lemma~\ref{Lemma-about-weak-critical} is now complete. 
\end{proof}


We next prove Lemma \ref{lemma-about-correspond-crit-pt}, which was mentioned at the beginning of the appendix.

\begin{proof}[{\bf Proof of Lemma~\ref{lemma-about-correspond-crit-pt}}]
To define the mapping ${\cal F}$, let $z \in {\cal E}$  such that  \begin{equation} \label{Euler_eqt}
\partial h_{\Sigma}^2 ({\dot {z}}) \ni   \lambda J \, { z} +
\alpha,
\end{equation}
for some vector $\alpha$ and $\lambda = I_{\Sigma}(z)$. We will use the
Legendre transform in order to define an affine linear image of
$z$ which is a generalized closed characteristic on the boundary $\partial \Sigma$,
 which we will then define as ${\cal F}^{-1}(z)$. Recall that
the Legendre transform is defined as follows: For $f : {\mathbb
R}^{n} \rightarrow {\mathbb R}$, one defines
$$ {\cal L} f(y) = \sup_{x \in {\mathbb R}^n} [ \langle y , x
\rangle - f(x) ], \ \forall y \in {\mathbb R}^n.$$
It is not hard to
check that
$ ({\cal L} (h_{\Sigma}^2))(v) =
4^{-1} h_{\Sigma^\circ}^2 (v), $ where $\Sigma^{\circ}$
is the polar body of $\Sigma$. Note that $h^2_{\Sigma^{\circ}}$ is
a defining function of $\Sigma$ (that is, $\Sigma$ is its 1-level set) which
is homogeneous of degree $2$.
 After applying the Legendre transform
and using the fact that $v =  \partial h_{\Sigma}^2(u)$ is inverted
point-wise (as a multivalued function) by $u = \partial {\cal L} h_{\Sigma}^2(v)$ (see e.g.,~\cite{Roc} Corollary 23.5.1)  equation
$(\ref{Euler_eqt})$ becomes:
\[ {\dot {z}} \in  4^{-1} \partial h_{\Sigma^{\circ}}^2 
(  \lambda J \,  z + \alpha )  = \partial h_{\Sigma^{\circ}}^2 \bigl
(  4^{-1} (\lambda J \,  z + \alpha) \bigr).
\]
 Next, let
\begin{equation} \label{eq_20} \gamma = {\kappa} \bigl
(  4^{-1} (\lambda J \,  z + \alpha) \bigr) 
\end{equation}
where $\kappa$ is a positive normalization constant which we will
readily choose. 
Differentiating~$(\ref{eq_20})$ we see  that $\gamma$
satisfies the following: 
\begin{equation*}
{\dot {\gamma}} \in  {\kappa } \, 4^{-1} \lambda \, J \partial h_{{\Sigma}^{\circ}}^2 ( { {\gamma}/ \kappa})
=4^{-1} \, \lambda
J \partial h_{{\Sigma}^{\circ}}^2 (
 \gamma).\end{equation*}
Since we ask $\gamma \in {\partial \Sigma}$ we need to
choose $\kappa$ such that $\gamma$  lie in the energy level
$h_{{\Sigma}^{\circ}}^2=1$. 
Since $h_{\Sigma^{\circ}}^2$ is homogeneous of degree $2$ we obtain from
Euler's formula (see~\cite{Fuc-Zh}) that
\begin{eqnarray*} {\frac 1 {2 \pi}} \int_0^{2\pi} h_{{\Sigma}^{\circ}}^2(
 \gamma(t))dt  & = & {\frac 1 { 4 \pi } } \int_0^{2\pi} \langle \partial
h_{{\Sigma}^{\circ}}^2(  \gamma(t)) ,  \gamma(t) \rangle dt = -{\frac {
1 } { \pi  \lambda } } \int_0^{2\pi} \langle J
\dot{ \gamma}(t) ,\gamma(t) \rangle dt
\\ & = & {\frac { \kappa^{2}  } { 16 \pi   \lambda}}
\int_0^{2\pi} \langle   \lambda {\dot {{z}}}(t) , \lambda J z(t) +
\alpha \rangle dt = {\frac { \kappa^{2}
 \lambda } {8 \pi   }},
\end{eqnarray*}
which is equal to $1$ if we choose $\kappa = ({ {8 \pi } / {
\lambda}})^{\frac 1 2}  $. For this
value of $\kappa$ one has that
\begin{equation}  \label{new_eq_for_l-correspond} 
  \gamma
= \Big( {\frac {\pi} {2 \lambda}} \Big )^{\frac 1 2} \Big( {
\lambda } Jz + { {\alpha} } \Big), \end{equation} is a generalized closed characteristic on $\partial \Sigma$. 
This $\gamma$ we denote by
${\cal F}^{-1}(z)$. Before we turn to show that this mapping is
invertible, 
we first derive the relation between ${\cal A}(\gamma)$ and $\lambda =
I_{\Sigma}(z)$. Using Euler's formula, and the above value of $\kappa$ one has:
 $$ {\cal A} (\gamma) = {\frac 1 2} \int_0^{2 \pi} \langle -J {\dot \gamma}(t) ,\gamma(t) \rangle dt =
  {\frac {\kappa^2} {32}}  {\lambda}^2  \int_0^{2 \pi} \langle
  {\dot z}(t) ,  J  z(t) + {\frac { \alpha} {\lambda}} \rangle dt
 = {\frac {\pi \lambda} {2} }   = {\frac \pi 2} I_{\Sigma}(z). 
$$
In order to show that the map  ${\cal F}^{-1}$ is indeed one-to-one and onto, we now
define ${\cal F}$. We start with a generalized closed characteristic on $\partial \Sigma$ which is the
image of
 a loop $\gamma$ where $\gamma:[0,2 \pi] \rightarrow {\mathbb R}^{2n}$
and $\dot \gamma \in d J \partial h_{{\Sigma}^{\circ}}^2(\gamma)$, for some constant $d$.
Next, we define
\[{\cal F}(\gamma) = J^{-1} \Bigl ((2 \pi d )^{-1/2} \Bigl (\gamma - {\frac
1 {2\pi}} \int_0^{2\pi} \gamma(t)dt \Bigr ) \Bigr ),\] 
and set $z={\cal F}(\gamma)$. It is easy to check that $\int_0^{2 \pi}
z(t)dt=0$. The fact that $z \in W^{1,2}(S^1, {\mathbb R}^{2n})$
follows from the boundedness of $\gamma$ (as ${\rm Image}(\gamma) \in \partial \Sigma$ is bounded) 
and the following argument: since $\dot z \in {C_1} \partial h_{{\Sigma}^{\circ}}^2(\gamma)$ for some constant $C_1$, and 
$|\partial h_{{\Sigma}^{\circ}}^2(x)| \le C_2 |x|$  for some constant
$C_2$ (again, in the set-theoretical sense), we conclude that for some constants $C_3$ 
\[
\int_0^{2\pi} |\dot{z}(t)|^2dt  
\le C_3 \int_0^{2\pi}
|\gamma(t)|^2dt < \infty.
\]
Moreover, a direct computation shows that:
\begin{eqnarray*} {\cal A}(z) & = & {\frac 1 2} \int_0^{2\pi}
\langle \dot z(t), Jz(t) \rangle dt = {\frac 1 2} (\pi d 2)^{-1}
\int_0^{2\pi} \langle J^{-1} \dot \gamma(t), \gamma(t) \rangle dt \\ & = &
{\frac 1 2} (\pi d 2)^{-1} \int_0^{2\pi} \langle d \partial
h_{{\Sigma}^{\circ}}^2(\gamma(t)), \gamma(t) \rangle dt = {\frac 1 {2 \pi}}
\int_0^{2\pi} h_{{\Sigma}^{\circ}}^2(\gamma(t)) =1,
\end{eqnarray*}
where the next to last inequality follows from Euler's formula.
Finally, note that
\[ \dot z = (\pi d 2)^{-\frac {1} 2} J^{-1} \dot \gamma \in  (\pi
d 2)^{-\frac {1} 2} d \partial h_{{\Sigma}^{\circ}}^2(\gamma)= (\pi d 2)^{-\frac
{1} 2} d \partial h_{\Sigma^{\circ}}^2 \Bigl((\pi d 2)^{\frac 1
2}Jz+\frac{1}{2\pi} \int_0^{2\pi} \gamma(t)dt \Bigr).\]
Using the Legendre transform as before we get that
$$ \partial h_{\Sigma}^2( \dot z) \ni \lambda Jz + \alpha,$$ where
$\alpha \in {\mathbb R}^{2n} $ and $\lambda \in {\mathbb R}$ are constants. 
From the homogeneity of $h^2_{\Sigma}$ we conclude:
$$ I_{\Sigma}(z) = \int_0^{2 \pi} h_{\Sigma}^2(\dot{z}(t))dt = {\frac 1 2}
\int_0^{2 \pi} \langle \partial h_{\Sigma}^2(\dot{z}(t)), \dot z(t) \rangle
dt = {\frac {\lambda} 2} \int_0^{2 \pi} \langle  Jz(t), \dot z(t)
\rangle dt =  \lambda.$$
A straightforward computation shows (we omit the details) that for $\gamma$ and $z$ as above one has 
${\cal F}^{-1} {\cal F} (\gamma) = \gamma$ and ${\cal F} {\cal F} ^{-1}(z) = z$.  

To complete the proof of the lemma, we use Lemma~\ref{Lemma-about-weak-critical} which implies that the minimum of $I_{\Sigma}(z)$ is attained 
on a weak critical point and thus its corresponding generalized closed characteristic must be an action minimzer.
The proof of the lemma is now complete.
\end{proof}

\begin{proof}[{\bf Proof of Proposition~\ref{prop-about-the-1-1-corrospendence}}]
Note that the equality on the left hand side of~$(\ref{ANS1}$) follows from Proposition~\ref{proposition-EHZ-capacity-non-smooth}. Combining this fact with   
Lemma~\ref{lemma-about-correspond-crit-pt} we conclude  that:
$$ \widetilde  c_{_{EHZ}}(\Sigma) \leq   \min_{z \in {\cal E}^{\dagger}}  {\frac {\pi} 2} I_{\Sigma}(z)   =   \min {\widetilde {\cal L}}(\Sigma),$$
and since the minimum is attained on a weak critical point by Lemma~\ref{Lemma-about-weak-critical}, the proof of the proposition is now complete.

\end{proof}

\bigskip

\noindent Shiri Artstein-Avidan\\
\noindent School of Mathematical Science, Tel Aviv University, Tel Aviv, Israel\\
\noindent {\it e-mail}: shiri@post.tau.ac.il
\bigskip

\noindent Yaron Ostrover\\
\noindent School of Mathematical Science, Tel Aviv University, Tel Aviv, Israel\\
\noindent {\it e-mail}: ostrover@post.tau.ac.il

\end{document}